\theoremstyle{plain}
\newtheorem{theorem}{Theorem}[section]
\newtheorem{lemma}[theorem]{Lemma}
\newtheorem{proposition}[theorem]{Proposition}
\theoremstyle{definition}
\newtheorem{remark}[theorem]{Remark}
\def\Rn{\mathbb R\sp n}
\def\nt{\|^{^{\oslash}}}
\def\R{\mathbb R}
\def\N{\mathbb N}
\def\M{\mathcal M}
\def\oX{\overline{X}}
\def\G{\mathcal G}
\def\C{\mathcal C}
\newtoks\by
\newtoks\paper
\newtoks\book
\newtoks\jour
\newtoks\yr
\newtoks\pages
\newtoks\vol
\newtoks\publ
\def\ota{{\hbox\vol{???}}}
\def\cLear{\by=\ota\paper=\ota\book=\ota\jour=\ota\yr=\ota
\pages=\ota\vol=\ota\publ=\ota}
\def\endpaper{\the\by, \the\paper.
{\it\the\jour\/} {\bf \the\vol} (\the\yr), \the\pages.\cLear}
\def\endbook{\the\by, {\it\the\book}. \the\publ.\cLear}
\def\endprep{\the\by, \the\paper. \the\jour.\cLear}
\numberwithin{equation}{section}
\def\loc{\operatorname{loc}}
\begin{document}

\title{Norms supporting  the Lebesgue differentiation theorem}

\begin{abstract} A version of the Lebesgue differentiation theorem is offered, where the $L^p$ norm is replaced with
any
 rearrangement-invariant norm. Necessary and sufficient conditions for a norm of this kind to support the Lebesgue
  differentiation theorem are established. In particular,  Lorentz,
   Orlicz and other customary norms for which Lebesgue's theorem holds are characterized.
\end{abstract}

\author{Paola Cavaliere, Andrea Cianchi,  Lubo\v s Pick and Lenka Slav\'{\i}kov\'a}

\address{Dipartimento di Matematica\\ Universit\`a di Salerno\\ Via Giovanni Paolo II, 84084 Fisciano (SA), Italy}
 \email{pcavaliere@unisa.it}

\address{Dipartimento di Matematica e Informatica \lq \lq U. Dini''\\
Universit\`a di Firenze\\  Viale Morgagni 57/A, 50134 Firenze,
Italy}
 \email{cianchi@unifi.it}

\address{Department of Mathematical Analysis
\\  Faculty of Mathematics and Physics,
Charles University
\\  Sokolovsk\'a~83,
186~75 Praha~8,
Czech Republic}
\email{pick@karlin.mff.cuni.cz}

\address{Department of Mathematical Analysis
\\  Faculty of Mathematics and Physics,
Charles University
\\  Sokolovsk\'a~83,
186~75 Praha~8,
Czech Republic}
\email{slavikova@karlin.mff.cuni.cz}

%\date{\today}

\subjclass[2000]{46E35, 46E30.}
\keywords{Lebesgue  differentiation theorem,
rearrangement-invariant spaces, Lorentz spaces, Orlicz spaces, Marcinkiewicz spaces.}
\thanks{This research was partly supported by
the  research project of MIUR
(Italian Ministry of Education, University and Research) Prin 2012,
n. 2012TC7588,  ``Elliptic and parabolic partial differential
equations: geometric aspects, related inequalities, and
applications",  by GNAMPA of the Italian INdAM (National
Institute of High Mathematics, by
 the grant P201/13/14743S of the Grant Agency of
the Czech Republic, and by the research project GA UK No. 62315 of
the Charles University in Prague.}
\maketitle

\section{Introduction and main results}\label{Intro}
A standard
 formulation of the classical Lebesgue differentiation theorem   asserts
that, if $u \in L^1_{\loc}(\Rn)$, $n\geq 1$, then
\begin{equation}\label{averages}\displaystyle
\lim _{r \to 0^+} \frac{1}{{\mathcal L}^n (B_{r}(x))}\int
_{B_{r}(x)} u(y)  \, d{\mathcal{\mathcal L}^n }(y)\qquad
\hbox{exists and is finite for  a.e. $x \in \Rn$,}
\end{equation}
 where ${\mathcal L}^n$ denotes the Lebesgue measure in $\Rn$,  and $B_r(x)$      the ball,  centered at $x$,  with radius $r$.  Here, and in what follows, \lq\lq a.e." means \lq\lq almost every" with respect to Lebesgue   measure.
In addition to \eqref{averages}, one has that
\begin{equation}\label{leb in L1}\displaystyle
\lim _{r \to 0^+}\    \| u - u(x) \nt_{L ^1(B_r(x))} = 0\qquad
\hbox{for   a.e. $x \in \Rn$},
\end{equation}
where $\| \cdot  \nt_{L ^1(B_r(x))}$ stands for the averaged norm in
$L^1(B_r(x))$ with respect to the normalized Lebesgue measure $
\tfrac{1}{{\mathcal L}^n (B_{r}(x))}\mathcal L^n$. Namely,
$$
\|u\nt_{{L^{1}(B_r(x))}}=
\tfrac{1}{{\mathcal L}^n (B_{r}(x))}\int _{B_{r}(x)} |u(y)|  \,
d{\mathcal{\mathcal L}^n }(y)
$$
for $u \in {L^1_{\loc}(\Rn)}$.
\par
A slight extension of this property ensures
that an analogous conclusion holds if the $L ^1$-norm in \eqref{leb in L1} is replaced with any
$L^p$-norm, with $p \in [1, \infty)$. Indeed, if $u \in
L^p_{\loc}(\Rn)$, then
\begin{equation}\label{leb in Lp}\displaystyle
\lim _{r \to 0^+}\    \| u - u(x) \nt_{L ^p(B_r(x))} = 0\qquad
\hbox{for   a.e. $x \in \Rn$},
\end{equation}
the averaged norm $\| \cdot \nt_{L^p(B_r(x))}$ being defined
accordingly. By contrast,   property \eqref{leb in Lp}
fails when $p=\infty$.
%\textcolor{red}{ (For the precise definitions of averaged norms in general function %spaces see Section~\ref{S:background} below.)}\textcolor{blue}{IO LO LEVEREI IN %QUANTO E' DETTO DOPO}
\par
The question thus arises of a characterization of those norms,
defined on the space $L^0(\Rn)$   of measurable functions on $\Rn$,
for which a version of the Lebesgue differentiation theorem
continues to hold.

In the present paper we address this issue in the class of all
rearrangement-invariant   norms, i.e.
  norms which only depend on the \lq\lq size"
of functions, or, more precisely, on the measure of their level
sets. A precise definition of this class of norms, as well as other
notions employed hereafter, can be found in
Section~\ref{S:background} below, where the necessary background
material is collected.

Let us just recall here that, if   $\| \cdot\|_{X(\Rn)}$ is a
rearrangement-invariant norm, then
 %\textcolor{magenta}{whenever for any $u, v \in L^0(\Rn)$  then}
\begin{equation}\label{r.i.} \|u \|_{X(\Rn)}=\| v\|_{X(\Rn)}  \qquad
{\hbox{whenever $u^*=v^*$}},  \end{equation}
 where $u^*$ and $v^*$ denote the decreasing rearrangements of the functions
 $u, v \in
L^0(\Rn)$. Moreover, given any
  norm of this kind, there exists another  rearrangement-invariant
function norm
\mbox{$\| \cdot \|_{\overline X(0,\infty)}$}~on
$L^0(0,\infty)$, called the representation norm of  {$\|\cdot\|_{X(\Rn)}$,}  such that
\begin{equation}\label{repr}
 \| u\|_{X(\Rn)} = \| u^*\|_{\overline X(0,\infty)}
\end{equation}
for every $u\in L^0(\Rn)$.
By $X(\Rn)$ we denote  the Banach function space, in the sense of
Luxemburg,  of all functions $u \in L^0(\Rn)$ such that $\|
u\|_{X(\Rn)}< \infty$.
%\textcolor{magenta}{Loosely speaking, $X(\Rn)$ is the   subspace of
%$L^0(\Rn)$ consisting of all those function $u$ for which
%$\|u\|_{X(\Rn)}$ is finite; whence $\| \cdot\|_{X(\Rn)}$, restricted
%to $X(\Rn)$, is   a norm fulfilling \eqref{r.i.} under which
%$X(\Rn)$ is a Banach space.} The norm  $\| \cdot\|_{\overline
%X(0,\infty)}$ is called \textcolor{magenta}{the??????}
%representation norm of $\| \cdot\|_{X(\Rn)}$.
Classical instances of rearrangement-invariant function norms   are
Lebesgue, Lorentz, Orlicz, and Marcinkiewicz
norms.
\par
In analogy with \eqref{leb in Lp}, a rearrangement-invariant norm
$\| \cdot\|_{X(\Rn)}$   will be said to satisfy the Lebesgue point
property if, for every $u \in X_{\loc}(\Rn)$,
\begin{equation}\label{leb in X2}\displaystyle
\lim _{r \to 0^+}\    \| u - u(x) \nt_{X(B_r(x))} = 0\qquad
\hbox{for   a.e. $x \in \Rn$}.
\end{equation}
Here, $\| \cdot \nt_{X(B_r(x))}$ denotes the   norm  on
 $X(B_r(x))$  with respect to the normalized Lebesgue
measure $ \tfrac{1}{{\mathcal L}^n (B_{r}(x))}\mathcal L^n$ -- see
\eqref{normaaverage'}, Section~\ref{S:background}.
%Such a
%norm can be defined along
% \eqref{distribution} and
% \eqref{decreasing rearrangement},   on replacing  $\Rn$  with $B_r(x)$ and $\mathcal L^n $  with  $\mathcal L^n_r$ in the definition of $u_*$.
\par
We shall exhibit necessary and sufficient conditions for $\|
\cdot\|_{X(\Rn)}$ to enjoy the Lebesgue point property. To begin
with, a necessary condition for $\| \cdot\|_{X(\Rn)}$  to satisfy
the Lebesgue point property is to be locally absolutely continuous
(Proposition~\ref{P:necessary}, Section~\ref{localabs}). This
means that,  for each  function $u \in
X_{\loc}(\Rn)$, one has $\displaystyle \lim_{j \to \infty} \|u\chi_{{K_j}}\|_{{X (\Rn)}}=0$
for every non-increasing sequence {$\{K_j\}$} of
bounded measurable sets  in $\Rn$ such that $\cap_{j\in \N} K_j =
\emptyset$.
\\
The local absolute continuity of  $\| \cdot\|_{X(\Rn)}$ is in turn
equivalent to the local separability of $X(\Rn)$, namely
 to  the separability of each
subspace of $X(\Rn)$ consisting of all functions which are supported
in any given bounded measurable subset of $\Rn$.
\par
As  will be clear from applications of our results to special
instances, this  necessary assumption is not yet sufficient. In
order to ensure the Lebesgue point property for $\|
\cdot\|_{X(\Rn)}$, it has to be complemented with
an additional assumption on
 the functional $\mathcal G _X$, associated with the representation norm $\|\cdot \|_{\overline X(0,\infty)}$, and defined as
 \begin{equation}\label{functional}
\mathcal G _X (f) = \|f^{-1}\|_{\overline X(0,\infty)}
\end{equation}
for every non-increasing function $f: [0,\infty) \rightarrow
[0,\infty]$. Here, $f^{-1} : [0,\infty) \rightarrow [0, \infty]$
denotes the (generalized) right-continuous inverse of $f$. Such an
assumption amounts to requiring  that  $\mathcal G_X$ be  \lq\lq
almost concave". By this expression,   we mean that the functional
$\mathcal G _X$, restricted to the convex set $\mathcal C$ of all
non-increasing functions from $[0,\infty)$ into $[0,1]$, fulfils the
inequality in the definition of concavity possibly
 up to a multiplicative positive constant $c$.
Namely,
\begin{equation}\label{nearly}
c \sum _{i=1}^k \lambda _i \mathcal G _X (f_i) \leq \mathcal G _X
\bigg(\sum _{i=1}^k \lambda _i f_i\bigg)
\end{equation}
%
%
% \begin{equation}\label{nearly}
%c \sum _{i=1}^k \lambda _i \big\|f_i^{-1}\big\|_{\overline
%X(0,\infty)} \leq \Big\|\Big(\sum _{i=1}^k \lambda _i
%f_i\Big)^{-1}\Big\|_{\overline X(0,\infty)}
%\end{equation}
 for any numbers
$\lambda _i \in (0, 1)$, $i=1, \dots, k$, $k\in\N$, such that
$\sum_{i=1}\sp k\lambda_i=1$, and any functions $f _i\in \mathcal
C$, $i=1, \dots, k$.
 Clearly, the functional $\mathcal G _X$ is concave on $\mathcal C$,
in the usual sense, if inequality \eqref{nearly} holds with $c=1$.

\begin{theorem}\label{T:main-theorem1}
A rearrangement-invariant norm
$\|\cdot\|_{{X(\Rn)}}$ satisfies  the Lebesgue point property if, and
only if, it is locally absolutely continuous and  the functional $\mathcal G _X$ is almost concave.
\end{theorem}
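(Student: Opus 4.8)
The plan is to prove the two implications separately, reducing everything to the behaviour of averaged norms of characteristic functions of small balls, for which the functional $\mathcal G_X$ is the right tool. Recall that for a ball $B_r(x)$ and a measurable set $E$, a scaling-and-translation computation together with \eqref{r.i.} and \eqref{repr} gives $\|\chi_E\nt_{X(B_r(x))} = \|\chi_{(0,|E|/|B_r(x)|)}\|_{\overline X(0,\infty)}$; more generally, for a non-increasing $f$ supported in $(0,1)$ one has $\mathcal G_X(f) = \|f^{-1}\|_{\overline X(0,\infty)}$, and this is precisely the averaged $X$-norm of a layer function built from the level sets of $f$. Thus almost concavity of $\mathcal G_X$ on $\mathcal C$ is exactly a quasi-subadditivity statement for averaged $X$-norms of simple functions whose level sets shrink, rescaled to live in a unit ball.

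\textbf{Necessity.} Local absolute continuity is already known to be necessary by Proposition~\ref{P:necessary}, so the work is to show that if $\mathcal G_X$ is \emph{not} almost concave then the Lebesgue point property fails. The idea is to build a single function $u \in X_{\loc}(\Rn)$ and a set of positive measure of points $x$ at which $\limsup_{r\to 0^+}\|u - u(x)\nt_{X(B_r(x))} > 0$. Negating \eqref{nearly}, for every $c>0$ we get convex combinations $\sum_i \lambda_i f_i$ of functions in $\mathcal C$ violating the inequality with constant $c$; by a dyadic/limiting argument one can amplify these into a single non-increasing function whose rescaled copies, placed on a sequence of balls $B_{r_j}(x)$ with $r_j \to 0$, force the averaged oscillation not to vanish. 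Concretely one distributes such profiles over a Cantor-type or checkerboard-type configuration so that the bad behaviour occurs simultaneously for all $x$ in a set of positive measure; this is where the failure of almost concavity is genuinely used, since concavity is exactly what would let one estimate the average over $B_{r}(x)$ by the averages over the finitely many pieces and conclude convergence.

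\textbf{Sufficiency.} Assume $\|\cdot\|_{X(\Rn)}$ is locally absolutely continuous and $\mathcal G_X$ is almost concave. Fix $u \in X_{\loc}(\Rn)$; we want \eqref{leb in X2}. The standard reduction is: it suffices to prove the Lebesgue point property for $u$ restricted to a large bounded set, and on such a set $u$ can be approximated in $X$-norm by continuous (even simple) functions $u_k$ — here local absolute continuity is used, via the equivalent local separability. For continuous $u_k$ the conclusion \eqref{leb in X2} is elementary since $\|u_k - u_k(x)\nt_{X(B_r(x))} \le \|\chi_{B_r(x)}\nt_{X(B_r(x))}\,\mathrm{osc}_{B_r(x)} u_k \to 0$. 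So it remains to control the error term $\|(u - u_k) - (u-u_k)(x)\nt_{X(B_r(x))}$ uniformly in $r$ by something that is small for a.e. $x$ once $k$ is large. Splitting off the constant $(u-u_k)(x)$ and using that $\|(u-u_k)(x)\|\, \|\chi_{B_r(x)}\nt \to |(u-u_k)(x)|$, the real issue is a maximal-type bound for the sublinear operator $g \mapsto \sup_{r>0}\|g\nt_{X(B_r(x))}$. The key step is to prove that this maximal operator is of weak type with respect to $\|\cdot\|_{X}$ on functions supported in a fixed bounded set, and this is exactly where almost concavity of $\mathcal G_X$ enters: writing $g$ via layer-cake and its level sets, almost concavity lets one sum the contributions of the layers with a fixed constant, yielding $\|\sup_r \|g\nt_{X(B_r(\cdot))}\,\|_{\text{weak-}X} \lesssim \|g\|_{X(\Rn)}$ (a Hardy–Littlewood–Wiener type inequality in the rearrangement-invariant setting). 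Granting this, the standard density argument of the classical proof goes through: the set where $\limsup_{r}\|u - u(x)\nt_{X(B_r(x))}$ exceeds $\varepsilon$ has small $X$-measure for all $k$, hence is null.

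\textbf{Main obstacle.} The crux on both sides is the same: establishing the equivalence ``$\mathcal G_X$ almost concave'' $\Longleftrightarrow$ ``the averaged-norm maximal operator $g\mapsto \sup_{r}\|g\nt_{X(B_r(\cdot))}$ obeys the right (weak-type) rearrangement-invariant bound on locally supported functions''. The sufficiency direction of that equivalence — extracting a \emph{single} constant in a maximal inequality out of the finite-sum inequality \eqref{nearly}, uniformly over all radii and all functions — via the layer-cake decomposition is the technically heaviest point, as is its converse, the explicit construction of a counterexample function when \eqref{nearly} fails for every $c$.
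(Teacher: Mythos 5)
Your overall architecture coincides with the paper's: necessity of local absolute continuity is quoted from Proposition~\ref{P:necessary}, necessity of almost concavity is to be shown by building a counterexample function when \eqref{nearly} fails, and sufficiency is to be shown by deriving a weak-type bound for the maximal operator $\M_X$ from almost concavity and then running the classical density argument (which, with local absolute continuity in hand, is exactly the last part of the proof of Proposition~\ref{L:v}). The problem is that the two steps you yourself single out as the ``main obstacle'' are precisely the mathematical content of the theorem, and your proposal does not carry either of them out; as written it is a plan, not a proof.

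Concretely: (1) For necessity, ``by a dyadic/limiting argument one can amplify these into a single non-increasing function'' and ``one distributes such profiles over a Cantor-type or checkerboard-type configuration'' is where all the work is. The paper does this in two separate stages through the intermediate condition (H): first, failure of almost concavity is converted (via Proposition~\ref{P:2} and the weighted sum $f=\sum_k 2^{-k}(\dots)f_k(2^k\cdot-1)/\|\cdot\|$ in the proof of Proposition~\ref{L:v}) into a single $f\in\overline X_1(0,\infty)$, disjoint intervals $I_{k,l}$ and numbers $a_{k,l}$ with $\sum a_{k,l}=\infty$ violating (H); second, Lemma~\ref{L:absolute-continuity} turns a violation of (H) into a function $u$ on $(0,1)^n$ by tiling $(0,\infty)$ with intervals $J_k$ of length $a_k$, taking a supremum over integer translates, and then proving both that $u$ vanishes on a set $M$ of positive measure (this needs $\sum\mathcal L^1(I_k)<1$) and that a.e.\ point of $M$ is hit by infinitely many balls of radius $\sqrt n\,a_{l_k}\to 0$ on which the averaged norm exceeds $1$. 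None of these ingredients (the normalization making the limsup bounded away from $0$, the reduction to $a_k\to 0$, the positive-measure bad set) is visible in your sketch, and without them the construction does not close. (2) For sufficiency, the passage from the finite-sum inequality \eqref{nearly} to the weak-type bound for $\M_X$ is not a layer-cake computation. The paper first shows (Proposition~\ref{P:2}) that almost concavity is equivalent to $\sum_k\mathcal L^n(B_k)\|u\nt_{X(B_k)}\le C\,\mathcal L^n(\cup B_k)\|u\nt_{X(\cup B_k)}$ over disjoint balls, deduces the much weaker ``Herz-type'' condition $\min_k\|u\nt_{X(B_k)}\le C\|u\nt_{X(\cup B_k)}$, and then invokes an external covering result (\cite[Proposition 3.2]{MP}, Lerner's argument) to get the Riesz--Wiener inequality \eqref{E:riesz}, from which the weak-type estimate follows via Lemma~\ref{T:lemma_conc} and the boundedness of the dilation operator. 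Your proposal asserts the maximal inequality but supplies no mechanism that replaces this covering argument, so the sufficiency direction also has a genuine gap at its crux.
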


\begin{remark}\label{rem1}
{\rm In order to give an idea of how the functional $\mathcal G _X$
looks like in classical instances, consider the case when
 $\|\cdot\|_{X(\Rn)}= \|\cdot\|_{L^p(\Rn)} $.
 One has that
$$
\G_{L^p}(f)=\begin{cases}\left(p\int_0^\infty s^{p-1} f(s)\,d \mathcal L^1
(s)\right)^{\frac 1p} \qquad & \mbox{if} \  p \in [1,\infty),\\
 \mathcal L^1(\{s\in [0,\infty): f(s)>0\}) & \mbox{if} \  p =\infty\,, \end{cases}
$$
for every non-increasing function $f: [0,\infty) \rightarrow
[0,\infty]$}. The functional $\G_{L^p}$ is concave for every $p \in
[1, \infty]$. However, $\|\cdot\|_{L^p(\Rn)}$ is locally absolutely
continuous only for $p<\infty$.
\end{remark}

\begin{remark}\label{rem1'}
The local absolute  continuity of a rearrangement invariant norm
$\|\cdot\|_{{X(\Rn)}}$  and the  almost concavity
of the functional $\mathcal G_X$ are independent properties.
 For instance, as noticed in the previous
remark, the norm $\|\cdot\|_{L^\infty(\Rn)}$ is not locally
absolutely continuous,  although  the functional $\mathcal
G_{L^\infty}$ is concave. On the
other hand,   whenever $q < \infty$, the Lorentz norm $\|\cdot\|_{L^{p,q}(\Rn)}$ is locally
absolutely continuous  , but  $\mathcal
G_{L^{p,q}}$  is    almost  concave if and only if $q\leq p$. The
Luxemburg norm $\|\cdot\|_{L^{A}(\Rn)}$ in the Orlicz space
$L^{A}(\Rn)$ is almost concave for every $N$-function $A$, but is
locally absolutely continuous if and only if $A$ satisfies the
$\Delta _2$-condition near infinity. These properties are
established in Section \ref{S:examples} below, where the validity of
the Lebesgue point property for various classes of norms is
discussed.
\end{remark}

An alternative characterization of the rearrangement-invariant norms
satisfying the Lebesgue point property involves
 a maximal function operator
associated with the norms in question. The relevant operator,
denoted by $\M_{{X}}$, is defined,  at each $u \in
X_{\loc}(\Rn)$, as
\begin{equation}\label{HL}
\M_{{X}} u(x)=\sup_{B\ni x} \|u\nt_{{X(B)}}
\qquad \hbox{for $x\in \Rn$},
\end{equation}
 where $B$ stands for any ball in $\Rn$.

 In the case  when ${X(\Rn)}={L^1(\Rn)}$, the
operator $\M_{X}$ coincides with the classical Hardy-Littlewood
maximal operator $\M$. It is well known that $\M$ is of weak type
from $L^1(\Rn)$ into $L^1(\Rn)$. Moreover,
since
$$\|u^*\nt_{L^1(0,s)} = \frac{1}{s}\int_0^s u^*(t)\,d \mathcal L^1 (t)  \qquad\text{for}\ s
\in  (0, \infty),$$ for every $u\in L^1_{\loc}(\Rn)$, the celebrated
Riesz-Wiener inequality takes the form
%
%a celebrated inequality due to Riesz \cite{R} \textcolor{red}{and
%Wiener \cite{Wie}} takes the form
$$
(\M u)^*(s)\leq  C \|u^*\nt_{L^1(0,s)} \qquad\text{for}\ s
\in  (0, \infty),
$$
for  some constant $C=C(n)$ \cite[Theorem 3.8, Chapter 3]{BS}.
\par
The validity of the Lebesgue point property for a
rearrangement-invariant norm $\| \cdot \|_{X(\Rn)}$ turns out to be
intimately connected to a suitable version of these two results for
the maximal operator $\M_{X}$  defined by
\eqref{HL}.  This is the content of our next result, whose statement
makes use of a notion of weak-type operators between local
rearrangement-invariant spaces. We say that $\M_{X} $ is of weak
type from $X_{\loc}(\Rn)$ into $L^1_{\loc}(\Rn)$  if for every
bounded measurable set $K\subseteq \Rn$,  there exists a constant
$C=C(K)$ such that
\begin{equation}\label{w-estHL}\mathcal L^n(\{x \in K: \M_{X} u(x)>t\}) \leq \frac Ct \|u\|_{X(\Rn)}  \qquad\text{for}\ t
\in  (0, \infty),
\end{equation}
 for every function $u\in
X_{\loc}(\Rn)$ whose support is contained in $K$.

\begin{theorem}\label{T:main-theorem2}
Let
$\|\cdot\|_{{X(\Rn)}}$ be a rearrangement-invariant norm. Then the
following statements are equivalent:
\par\noindent
\textup{\ \ (i)} $\|\cdot\|_{X(\Rn)}$ satisfies the Lebesgue point
property;
\par\noindent
\textup{\ (ii)} $\|\cdot\|_{X(\Rn)}$ is locally absolutely continuous, and
 the  Riesz-Wiener type inequality
\begin{equation}\label{E:riesz}
(\M_{X}u)^*(s) \leq C \|u^*\nt_{\overline{X}(0,s)}  \qquad\text{for}\ s
\in  (0, \infty),
\end{equation}
holds for some positive
constant $C$, and for every $u\in X_{\loc}(\Rn)$;
\par\noindent
\textup{(iii)}  $\|\cdot\|_{X(\Rn)}$ is locally absolutely continuous, and the operator $\M_{{X}}$
is of weak type from $X_{\loc}(\Rn)$ into $L\sp1_{\loc}(\Rn)$.
\end{theorem}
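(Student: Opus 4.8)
The plan is to prove the cycle of implications $(i)\Rightarrow(iii)\Rightarrow(ii)\Rightarrow(i)$, treating local absolute continuity as a common hypothesis that is already known (by Proposition~\ref{P:necessary}) to be necessary for $(i)$, so that the real content lies in the maximal-operator estimates. I would begin by recording a few preliminary facts: first, that for a ball $B_r(x)$ the averaged norm $\|u\nt_{X(B_r(x))}$ is controlled via the representation norm, $\|u\nt_{X(B_r(x))} = \|(u\chi_{B_r(x)})^*\nt_{\overline X(0,\mathcal L^n(B_r(x)))}$, so that $\M_X u$ is governed by averaged $\overline X$-norms of rearrangements on initial intervals; and second, the elementary monotonicity $\|f\nt_{\overline X(0,s)}$ is nonincreasing in $s$ for nonincreasing $f$, together with the layer-cake/Hardy-type comparison $\|u^*\nt_{\overline X(0,s)} \le c\, s^{-1}\int_0^s \|u^*\nt_{\overline X(0,t)}\,dt$ or its reverse, depending on which direction is needed.

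For $(ii)\Rightarrow(iii)$: assuming \eqref{E:riesz}, fix a bounded set $K$ and $u$ supported in $K$. Then $(\M_X u)^*(s)\le C\|u^*\nt_{\overline X(0,s)}$, and since $u^*$ is supported in $[0,\mathcal L^n(K)]$, for $s$ comparable to $\mathcal L^n(K)$ the right-hand side is at most $C'\|u\|_{X(\Rn)}/s$ by definition of the fundamental function and the estimate $\|u^*\nt_{\overline X(0,s)} \le s^{-1}\|u^*\|_{\overline X(0,s)}\varphi_{\overline X}(s)^{?}$ — more directly, $s\|u^*\nt_{\overline X(0,s)}\le \|u^*\|_{\overline X(0,s)}\cdot(\text{norm of }\chi_{(0,s)}/s\text{ in the associate space})$, which for $s\ge\mathcal L^n(K)$ is $\lesssim \|u\|_{X(\Rn)}$. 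Taking distribution functions then gives \eqref{w-estHL} with $C=C(K)$.

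For $(iii)\Rightarrow(ii)$: the weak-type bound \eqref{w-estHL} says precisely that $(\M_X u)^*(s)\le Cs^{-1}\|u\|_{X(\Rn)}$ for $u$ supported in $K$ and $s\in(0,\mathcal L^n(K))$; to upgrade this to the pointwise-in-$s$ Riesz--Wiener inequality \eqref{E:riesz} one applies the weak-type estimate to the truncated function $u\chi_{\{u^*(s)<|u|\}}$ (the part of $u$ above its rearrangement at level $s$) on a suitable containing ball, and adds back the bounded part $u^*(s)$ whose $\M_X$-contribution is at most $u^*(s)\cdot\varphi_{\overline X}(\text{something})/(\cdots)\le c\|u^*\nt_{\overline X(0,s)}$ — this is the standard Calderón–Zygmund-type decomposition argument adapted to r.i.\ norms and is the technical heart of this implication. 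Finally $(ii)\Rightarrow(i)$: given local absolute continuity and \eqref{E:riesz}, approximate $u\in X_{\loc}$ by a continuous (or bounded) function $g$ with $\|u-g\|_{X(K)}$ small, use continuity of $g$ to kill the oscillation term $\|g-g(x)\nt_{X(B_r(x))}\to0$ everywhere, and control the error $\|u-g-(u(x)-g(x))\nt_{X(B_r(x))}\le \M_X(u-g)(x)+|u(x)-g(x)|\cdot(\cdots)$; by \eqref{E:riesz} the maximal term is small in measure, by local absolute continuity the constant multiple of $|u-g|$ is small a.e.\ along a subsequence, and a standard diagonalization over shrinking $\|u-g\|$ yields \eqref{leb in X2} a.e.

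The main obstacle I expect is the implication $(iii)\Rightarrow(ii)$, i.e.\ passing from the a priori weak-type bound (which only controls $(\M_Xu)^*(s)$ by a \emph{single} number $\|u\|_{X(\Rn)}$, uniformly in $s$ on a bounded range) to the sharp \emph{localized} estimate \eqref{E:riesz} in which the right-hand side $\|u^*\nt_{\overline X(0,s)}$ genuinely depends on $s$: one must split $u$ at the level $u^*(s)$ and show that the weak-type inequality applied to the unbounded part, combined with the trivial estimate for the bounded part, reassembles into the $s$-dependent bound, and verifying that the constant stays uniform (independent of $u$ and $s$) requires careful bookkeeping with the fundamental function $\varphi_{\overline X}$ and the dilation operator on $\overline X(0,\infty)$.
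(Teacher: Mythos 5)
Your treatment of the two easy implications is essentially the paper's: the passage from the Riesz--Wiener inequality to the weak-type bound is the implication (iv)$\Rightarrow$(v) of Proposition~\ref{L:v} (monotonicity of $s\mapsto s\|u^*\nt_{\oX(0,s)}$ from Lemma~\ref{T:lemma_conc} plus boundedness of the dilation operator), and your approximation argument for (ii)$\Rightarrow$(i) is the paper's (v)$\Rightarrow$(i) (simple functions are dense by local absolute continuity, and the oscillation of the error $w_\varepsilon$ is controlled by $\M_X w_\varepsilon(x)+|w_\varepsilon(x)|\,\|\chi_{(0,1)}\|_{\oX(0,\infty)}$, both small in measure). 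The two hard implications, however, are respectively missing and flawed. First, your cycle (i)$\Rightarrow$(iii)$\Rightarrow$(ii)$\Rightarrow$(i) needs the implication (i)$\Rightarrow$(iii) --- that the Lebesgue point property forces a maximal-operator bound --- and you never address it; Proposition~\ref{P:necessary} only yields local absolute continuity. This is where the paper does almost all of its work: the Lebesgue point property is shown to imply the summability condition (H) of Lemma~\ref{L:absolute-continuity} by an explicit construction of a bad function, condition (H) implies the almost concavity of $\mathcal G_X$ (implication (ii)$\Rightarrow$(iii) of Proposition~\ref{L:v}, again by a delicate construction), and almost concavity yields the Riesz--Wiener inequality through Lerner's covering criterion (Propositions~\ref{P:2} and~\ref{P:herz}). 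None of this is replaceable by a soft argument, and without it your cycle does not close.

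Second, your proposed direct proof of (iii)$\Rightarrow$(ii) by splitting $u$ at the level $u^*(s)$ fails. Applying the weak-type hypothesis to $u_1=u\chi_{\{|u|>u^*(s)\}}$ gives at best $(\M_X u_1)^*(s)\le C\,s^{-1}\|u_1\|_{X(\Rn)}\le C\,s^{-1}\|u^*\chi_{(0,s)}\|_{\oX(0,\infty)}$, whereas the target is $C\,\|u^*\nt_{\oX(0,s)}=C\,\|u^*(s\,\cdot)\chi_{(0,1)}\|_{\oX(0,\infty)}$; these differ by an unbounded factor for a general rearrangement-invariant norm. For instance, with $X=L^2$ and $u=\chi_E$, $\mathcal L^n(E)=s$, one has $s^{-1}\|u^*\chi_{(0,s)}\|_{L^2(0,\infty)}=s^{-1/2}\to\infty$ as $s\to 0^+$, while $\|u^*\nt_{L^2(0,s)}=1$. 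The Calder\'on--Zygmund self-improvement works for $\M_{L^1}$ only because there the weak-type quantity and the averaged norm coincide ($s^{-1}\int_0^s u^*=u^{**}(s)$); for general $X$ the weak-type bound into $L^{1,\infty}$ is strictly weaker information than the $X$-averaged bound, and the constant $C(K)$ in \eqref{w-estHL} is moreover allowed to depend on $K$, so no uniform constant in \eqref{E:riesz} can be extracted this way. The paper recovers \eqref{E:riesz} from the weak-type hypothesis only by the detour (iii)$\Rightarrow$(i)$\Rightarrow$(ii), i.e.\ through the Lebesgue point property, condition (H) and the almost concavity of $\mathcal G_X$, not by a direct decomposition.
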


\begin{remark}\label{rem2}
 {\rm The  local absolute continuity of the norm $\|\cdot\|_{X(\Rn)}$  is  an indispensable hypothesis in both conditions (ii)
  and (iii)  of  Theorem~\ref{T:main-theorem2}.  Indeed, its necessity is already known from Theorem~\ref{T:main-theorem1},
  and, on the other hand, it does not follow  from the other assumptions in (ii) or (iii).  For instance, both  these
    assumptions are fulfilled by the rearrangement-invariant
norm   $\|\cdot\|_{L^\infty(\Rn)}$, which, however, is not locally
absolutely continuous, and, in fact, does not satisfy the Lebesgue
point property. }
\end{remark}

\begin{remark}\label{rem2'}
 {\rm  Riesz-Wiener type inequalities for special classes of
rearrangement-invariant norms have been  investigated in the
literature -- see e.g. \cite{BMR, BP,  Lec, Le}. In particular,
in \cite{BP}
 inequality \eqref{E:riesz} is shown to hold  when $\|\cdot
 \|_{X(\Rn)}$ is an Orlicz norm $\|\cdot\|_{L^A(\Rn)}$ associated with any Young function $A$. The case
of  Lorentz norms $\|\cdot\|_{L^{p,q}(\Rn)}$ is treated in
\cite{BMR}, where it is proved that \eqref{E:riesz} holds    if, and
only if, $1\leq q\leq p$.
%In particular, no (local) absolute continuity of the
%Orlicz norm is required. This result was proved in~\cite{BP}.
In fact,  a different notion of maximal operator is considered in
\cite{BMR}, which, however, is equivalent to \eqref{HL} when
$\|\cdot \|_{X(\Rn)}$ is a Lorentz norm, as is easily seen from \cite[Equation $(3.7)$]{CC}.

\noindent  A simple sufficient condition for the validity  of the
Riesz-Wiener type inequality for very general maximal operators is
proposed in \cite{Le}. In   our framework, where maximal operators
built upon rearrangement-invariant norms are taken into account,
such condition turns out to be also necessary, as will be shown in
Proposition~\ref{P:herz}. The approach introduced in~\cite{Le} leads
to alternative proofs of the Riesz-Wiener type inequality for Orlicz
and Lorentz norms, and was also used in~\cite{MP} to prove the
validity of~\eqref{E:riesz} for further families of
rearrangement-invariant norms, including, in particular, all Lorentz
endpoint norms $\|\cdot\|_{\Lambda_\varphi(\Rn)}$. A kind of
rearrangement inequality for the maximal operator built upon these
Lorentz  norms already appears  in~\cite{Lec}.
\\
 Results on weak type boundedness of
the maximal operator $\mathcal M_X$ are available in the literature
as well \cite{Ai, CK, L, Pe, Stein}. For instance,  in \cite{Stein} it is pointed out that the
operator $\mathcal M_{L^{p,q}}$ is of weak type from
$L^{p,q}(\Rn)$ into $L^p(\Rn)$, if $1\leq q\leq p$,
and hence, in particular, it is of weak type from
$L^{p,q}_{\loc}(\Rn)$ into $L^1_{\loc}(\Rn)$.}
\end{remark}

Our last main result provides us with  necessary and sufficient
conditions for the Lebesgue point property of a
rearrangement-invariant  norm which do not make explicit reference
to the local absolute continuity of the relevant norm.

\begin{theorem}\label{T:main-theorem3}
Let $\|\cdot\|_{{X(\Rn)}}$ be a rearrangement-invariant norm. Then
the following statements are equivalent:
\par\noindent
\textup{\ \ (i)} $\|\cdot\|_{X(\Rn)}$ satisfies the Lebesgue point
property;
\par\noindent
\textup{\ (ii)} For every function $u\in X(\Rn)$,
supported in a set of  finite  measure,
\[
\mathcal L^n (\{x\in\Rn\colon \M_{X}u(x)>1\})<\infty; \]
\par\noindent
\textup{(iii)}  For every function $u\in X(\Rn)$,
supported in a set of  finite  measure,
 $$\displaystyle \lim_{s\to \infty}
(\M_{X}u)^*(s)=0.$$
\end{theorem}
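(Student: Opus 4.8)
The plan is to deduce Theorem~\ref{T:main-theorem3} from Theorem~\ref{T:main-theorem2} by proving the cycle $(i)\Rightarrow(iii)\Rightarrow(ii)\Rightarrow(i)$; recall that, by Theorem~\ref{T:main-theorem2}, statement $(i)$ is equivalent to the conjunction of local absolute continuity of $\|\cdot\|_{X(\Rn)}$ with the Riesz--Wiener inequality~\eqref{E:riesz}, equivalently with the weak-type bound~\eqref{w-estHL}. For $(i)\Rightarrow(iii)$, let $u\in X(\Rn)$ be supported in a set of measure $m<\infty$; since $u^*$ is non-increasing and vanishes on $(m,\infty)$, one has $u^*(s\,t)\le u^*(t)$ for $s\ge1$, whence, by the definition of the averaged norm, $\|u^*\nt_{\overline{X}(0,s)}\le\|u^*\chi_{(0,m/s)}\|_{\overline{X}(0,\infty)}$ when $s>m$; by the local absolute continuity of the representation norm, applied to the fixed function $u^*$, the right-hand side tends to $0$ as $s\to\infty$, and then~\eqref{E:riesz} gives $(\M_{X}u)^*(s)\to0$. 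For $(iii)\Rightarrow(ii)$ it suffices to observe that $(\M_{X}u)^*(s)\to0$ forces $(\M_{X}u)^*(s_0)<1$ for some $s_0$, and hence $\mathcal L^n(\{x\in\Rn\colon\M_{X}u(x)>1\})\le s_0<\infty$.

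The core of the argument is $(ii)\Rightarrow(i)$, which I would establish in two steps. \emph{First, $(ii)$ forces local absolute continuity.} Arguing by contraposition: if $\|\cdot\|_{X(\Rn)}$ is not locally absolutely continuous, then the representation norm restricted to some interval of finite length fails to be absolutely continuous, and hence — by the standard way in which failure of absolute continuity exhibits a lattice copy of $\ell^\infty$ — there are a $\delta>0$, pairwise disjoint sets $F_k\subseteq(0,1)$ with $\sum_k|F_k|<\infty$, and constants $c_k$ such that $\|c_k\chi_{F_k}\|_{\overline{X}(0,\infty)}\ge\delta$ for every $k$ while $\sup_{S}\|\sum_{k\in S}c_k\chi_{F_k}\|_{\overline{X}(0,\infty)}<\infty$, the supremum running over all finite $S\subseteq\N$ (necessarily $|F_k|\to0$ and $c_k\to\infty$). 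Realize each $c_k\chi_{F_k}$ as a function $c_k\chi_{B_k}$ on $\Rn$, with $B_k$ a ball of measure $|F_k|$, and choose the $B_k$ pairwise disjoint and mutually very far apart; put $v:=\sum_k c_k\chi_{B_k}$. Since the $B_k$ are disjoint, $v^*$ is the decreasing rearrangement of $\sum_k c_k\chi_{F_k}$, so $v\in X(\Rn)$ and $v$ is supported in a set of measure $\sum_k|F_k|<\infty$. On the other hand, for each $k$ and each $x$ within a distance $\rho$ of $B_k$, choosing a ball $B$ of measure comparable to $\rho^n$ that contains $B_k$ and $x$, the monotonicity of $\M_{X}$ together with the quasiconcavity of the fundamental function of $\overline{X}(0,\infty)$ give
\[
\|v\nt_{X(B)}\ge c_k\,\|\chi_{(0,|B_k|/\mathcal L^n(B))}\|_{\overline{X}(0,\infty)}\ge\frac{c_k}{\mathcal L^n(B)}\,\|\chi_{(0,|B_k|)}\|_{\overline{X}(0,\infty)}\ge\frac{\delta}{\mathcal L^n(B)}
\]
provided $\mathcal L^n(B)\ge1$. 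Thus $\M_{X}v(x)\gtrsim\delta$ for every $x$ in a ball of a fixed radius about $B_k$, so $\M_{X}(\lambda v)>1$ on a set of infinite measure once $\lambda$ is large enough, while $\operatorname{supp}(\lambda v)$ still has finite measure — contradicting~$(ii)$.

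\emph{Second, $(ii)$ together with local absolute continuity yields~\eqref{w-estHL}.} Fix a bounded measurable set $K$ and consider the closed subspace $X_K=\{u\in X(\Rn)\colon\operatorname{supp}u\subseteq K\}$ of $X(\Rn)$. The operator $\M_{X}$ is sublinear, positively homogeneous and monotone on $X_K$; by the elementary estimate $\|u\nt_{X(B)}\le\|u\|_{X(\Rn)}$ for $\mathcal L^n(B)\ge1$, one has $\M_{X}u(x)\le\|u\|_{X(\Rn)}$ whenever $x$ lies outside a fixed bounded neighbourhood $K'$ of $K$, while on $K'$ local absolute continuity and~$(ii)$ (applied to the dilates $u/t$) give $\mathcal L^n(\{\M_{X}u>t\})<\infty$ for each $t>0$. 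Moreover $\M_{X}$ is lower semicontinuous along norm-convergent sequences, since $\|u_k\nt_{X(B)}\to\|u\nt_{X(B)}$ for each fixed ball $B$ whenever $u_k\to u$ in $X(\Rn)$, so that $\M_{X}u\le\liminf_k\M_{X}u_k$ pointwise. Consequently the sets $\{u\in X_K\colon\|u\|_{X(\Rn)}\le1,\ t\,\mathcal L^n(\{\M_{X}u>t\})\le j\ \text{for all }t>0\}$, $j\in\N$, are closed in the unit ball of $X_K$ and — this being the point at which~$(ii)$ is genuinely used — cover it; a Baire-category argument then produces some $j_0$ and a ball of $X_K$ on which~\eqref{w-estHL} holds with constant $j_0$, and homogeneity together with the translation invariance of $\M_{X}$ propagates~\eqref{w-estHL} to all of $X_K$ with a constant $C=C(K)$. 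By Theorem~\ref{T:main-theorem2} this gives~$(i)$, closing the cycle.

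I expect the main obstacle to be the second step of $(ii)\Rightarrow(i)$, namely extracting the quantitative estimate~\eqref{w-estHL} from the purely qualitative conclusion of~$(ii)$. The delicate point is to verify that the candidate sets above genuinely cover the unit ball of $X_K$, i.e. that for each individual $u\in X_K$ the function $t\mapsto t\,\mathcal L^n(\{\M_{X}u>t\})$ is bounded — this is the analogue, for an arbitrary rearrangement-invariant norm, of the classical fact that the Hardy--Littlewood maximal function of an integrable function is finite almost everywhere, and it is exactly here that~$(ii)$, rather than local absolute continuity alone, is indispensable. The first step is comparatively routine, the only subtleties being the passage, on the level of the representation space, from failure of local absolute continuity to a well-placed lattice copy of $\ell^\infty$, and the quasiconcavity bookkeeping used to keep $\M_{X}v$ bounded below on balls of a fixed radius.
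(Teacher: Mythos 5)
Your outer architecture is sound: the cycle \textup{(i)}$\Rightarrow$\textup{(iii)}$\Rightarrow$\textup{(ii)}$\Rightarrow$\textup{(i)} is a legitimate plan, and your arguments for \textup{(i)}$\Rightarrow$\textup{(iii)} (via Theorem~\ref{T:main-theorem2}, the pointwise bound $u^*(st)\chi_{(0,1)}(t)\le u^*(t)\chi_{(0,m/s)}(t)$ and local absolute continuity) and for \textup{(iii)}$\Rightarrow$\textup{(ii)} are correct. The problem is that essentially all of the content of the theorem sits inside \textup{(ii)}$\Rightarrow$\textup{(i)}, and your second step there has a genuine gap that you have flagged but not closed. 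The Baire-category argument needs the sets $\{u\in X_K:\|u\|_{X(\Rn)}\le1,\ t\,\mathcal L^n(K\cap\{\M_{X}u>t\})\le j\ \text{for all }t\}$ to cover the unit ball, i.e.\ it needs $\sup_{t>0}t\,\mathcal L^n(K\cap\{\M_{X}u>t\})<\infty$ for each \emph{individual} $u$. Condition \textup{(ii)} only yields, via homogeneity, that $\mathcal L^n(\{\M_{X}u>t\})$ is finite for each fixed $t$; it does not preclude, say, $\mathcal L^n(K\cap\{\M_{X}u>t\})\sim t^{-1}\log t$ as $t\to\infty$, for which the supremum is infinite. The non-uniform boundedness you need is not an ``a.e.\ finiteness'' statement but is already a pointwise-in-$u$ version of the weak-type conclusion you are trying to prove, so the covering claim begs the question. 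The standard repairs (Stein's maximal principle, Nikishin's factorization) use a different family of closed sets and then a nontrivial upgrading mechanism relying on extra structure; none of this is routine, and as written the step fails.

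A secondary inaccuracy: in your first step, failure of local absolute continuity does not produce \emph{constant} multiples $c_k\chi_{F_k}$ with $\|c_k\chi_{F_k}\|_{\oX(0,\infty)}\ge\delta$ and uniformly bounded finite sums; the standard construction (as in the paper's Lemma~\ref{L:absolute-continuity-bis}) only yields pieces $g\chi_{I_k}$ of a single function $g\in\oX_1(0,\infty)$ on disjoint intervals, with $\|g\chi_{I_k}\|_{\oX(0,\infty)}\ge\delta$. Your subsequent quasiconcavity computation can be salvaged for such pieces by replacing the fundamental-function estimate with the dilation bound $\|D_Rf\|_{\oX(0,\infty)}\ge\|f\|_{\oX(0,\infty)}/R$ for $R\ge1$, so this step is repairable --- and, once repaired, it is essentially the implication \textup{(ii)}$\Rightarrow$\textup{(i)} of the paper's Proposition~\ref{P:3} combined with Lemma~\ref{L:absolute-continuity-bis}. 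The paper's route around your main obstacle is entirely different: it shows that \textup{(ii)} is equivalent to the combinatorial condition \textup{(H)} (Proposition~\ref{P:3}), that \textup{(H)} forces the almost concavity of $\G_X$ by a quantitative contradiction argument (the $4^k$-weighted gluing in Proposition~\ref{L:v}), and that almost concavity yields the Riesz--Wiener inequality through Lerner's covering lemma (Proposition~\ref{P:herz}); the weak-type constant is produced by that chain, not by a category argument. You would need to supply an argument of comparable depth to close \textup{(ii)}$\Rightarrow$\textup{(i)}.
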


Theorems~\ref{T:main-theorem1},~\ref{T:main-theorem2} and~\ref{T:main-theorem3}  enable us to characterize the validity of the
Lebesgue point property in customary classes of
rearrangement-invariant norms.

The following proposition deals with
the case of standard Lorentz norms  $\|\cdot\|_{{L^{p,q}(\Rn)}}$.

\begin{proposition}\label{T:Lorentz} The Lorentz  norm $\|\cdot\|_{{L^{p,q}(\Rn)}}$
satisfies  the Lebesgue point property if, and only if, $1\leq q
\leq p < \infty$.
\end{proposition}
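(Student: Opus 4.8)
The plan is to combine the general characterization of Theorem~\ref{T:main-theorem1} with explicit computations of the two relevant properties for Lorentz norms. Recall that, for $1\le q<\infty$, the Lorentz norm is
\[
\|u\|_{L^{p,q}(\Rn)} = \left(\int_0^\infty \bigl(t^{1/p}u^*(t)\bigr)^q\,\frac{dt}{t}\right)^{1/q},
\]
while for $q=\infty$ it is $\|u\|_{L^{p,\infty}(\Rn)}=\sup_{t>0}t^{1/p}u^*(t)$. First I would dispose of the ranges of parameters that do not even make $L^{p,q}(\Rn)$ a rearrangement-invariant \emph{normable} space in the Luxemburg sense: the functional above is equivalent to a rearrangement-invariant norm precisely when $1\le q\le p\le\infty$ or $p=\infty$, so the statement's restriction to $p<\infty$ together with $q\le p$ is partly forced already at this level; but since the proposition is phrased as an equivalence, I would instead argue directly for all $q\le p<\infty$ that the two conditions of Theorem~\ref{T:main-theorem1} hold, and for $q>p$ (or $p=\infty$) that one of them fails.

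Second, the local absolute continuity: for $q<\infty$ the norm $\|\cdot\|_{L^{p,q}(\Rn)}$ is locally absolutely continuous, which is classical — if $u\in L^{p,q}_{\loc}$ and $K_j\downarrow\emptyset$ are bounded, then $u^*\chi_{[0,|K_j|]}\to 0$ pointwise a.e.\ with $|K_j|\to 0$, and dominated convergence in the integral defining the norm gives $\|u\chi_{K_j}\|_{L^{p,q}}\to 0$ (using $(u\chi_{K_j})^*\le u^*\chi_{[0,|K_j|]}$). For $q=\infty$, $L^{p,\infty}$ is not locally absolutely continuous (the rearrangement of $|x|^{-n/p}$ on the unit ball furnishes the standard counterexample), so the Lebesgue point property fails there; this takes care of the $q=\infty$ case.

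Third, and this is where the real content lies, I must compute the functional $\mathcal G_{L^{p,q}}$ and determine exactly when it is almost concave. Using the layer-cake/inverse relation, for a non-increasing $f:[0,\infty)\to[0,1]$ one gets, up to equivalence,
\[
\mathcal G_{L^{p,q}}(f) = \|f^{-1}\|_{\overline{L^{p,q}}(0,\infty)}
\simeq \left(\int_0^1 \left(f^{-1}(\lambda)\right)^{q}\,\lambda^{q/p-1}\,d\lambda\right)^{1/q}
= \left(q\int_0^\infty s^{q/p-1}\,\bigl(\text{level set of }f\text{ at }s\bigr)\,ds\right)^{1/q}
\]
after a change of variables expressing the integral over $\lambda$ in terms of the distribution function of $f^{-1}$, i.e.\ of $f$ itself. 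Writing $g(s)=\mathcal L^1(\{f>s\})=(f^{-1})$-type quantity, one arrives at $\mathcal G_{L^{p,q}}(f)\simeq \|f\|$ of a weighted $L^q$-type expression in the \emph{inverse} variable, and the upshot is that, restricted to $\mathcal C$, the functional $\mathcal G_{L^{p,q}}$ behaves like $f\mapsto\|s\mapsto g(s)^{1/?}\|$ — the key point being that when $q\le p$ the map is equivalent to a \emph{concave} functional of $f$ (a norm of a sublinear expression), hence almost concave, whereas when $q>p$ one can test \eqref{nearly} on suitable sums of characteristic functions $f_i=\chi_{[0,a_i]}$ and get a failure with a constant that degenerates: choosing $k$ disjointly-scaled such functions, the left side grows like $k^{1-1/q}$ relative to the right side's $k^{1-1/p}$ after normalization, so no uniform $c>0$ works. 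The main obstacle will be performing this change of variables cleanly and isolating the precise homogeneity exponent that distinguishes $q\le p$ from $q>p$; I expect that the concavity for $q\le p$ follows from recognizing $\mathcal G_{L^{p,q}}$ on $\mathcal C$ as (equivalent to) the $L^q$-norm, with $q\le p$, of the concave-in-$f$ family $s\mapsto \mathcal L^1(\{f\ge s\})^{1/p}$ — i.e.\ an $\ell^q$-combination of concave functionals with $q\le p$ — using that the $L^q$ norm of a sum dominates a constant times the sum of $L^q$ norms exactly in this range (reverse Minkowski/power-mean direction). Combining: for $1\le q\le p<\infty$ both conditions of Theorem~\ref{T:main-theorem1} hold, giving the Lebesgue point property; for $q>p$ (with $q<\infty$) local absolute continuity holds but almost concavity of $\mathcal G_{L^{p,q}}$ fails; and for $q=\infty$ local absolute continuity fails. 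This exhausts all cases and proves the proposition.
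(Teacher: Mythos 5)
Your overall strategy is the paper's: combine Theorem~\ref{T:main-theorem1} with the identity $\G_{L^{p,q}}(f)=\big(p\int_0^\infty s^{q-1}f(s)^{q/p}\,ds\big)^{1/q}$ (Lemma~\ref{L:concavity-lorentz}; your intermediate display has the exponents misplaced, but the Fubini computation you sketch does land on this formula), with local absolute continuity for $q<\infty$ and its failure for $q=\infty$. Two remarks on the setup before the real issues: your claim that $L^{p,q}$ is normable only for $q\le p$ is wrong (for $1<p<\infty$ the functional is equivalent to an r.i.\ norm for \emph{every} $1\le q\le\infty$, via $u^{**}$; only the triangle inequality on the nose requires $q\le p$), though you do not rely on it. More seriously, your justification of concavity for $1<q\le p$ does not work: viewing $\G_{L^{p,q}}(f)$ as the $L^q$-norm of the pointwise-concave family $s\mapsto s^{(q-1)/q}f(s)^{1/p}$ and invoking a ``reverse Minkowski'' inequality fails, because for $q>1$ the $L^q$-norm of a sum does \emph{not} dominate the sum of the $L^q$-norms (test disjointly supported summands); concavity of the integrand does not pass through $\|\cdot\|_{L^q}$. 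The correct, and shorter, argument is the paper's: $t\mapsto t^{q/p}$ is concave since $q/p\le1$, hence $f\mapsto p\int_0^\infty s^{q-1}f(s)^{q/p}\,ds$ is concave, and composing with the concave nondecreasing map $t\mapsto t^{1/q}$ preserves concavity.

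For $1\le p<q<\infty$ your route (disprove almost concavity directly and apply Theorem~\ref{T:main-theorem1}) is a legitimate alternative to the paper's (which violates condition (H) with an explicit function and applies Theorem~\ref{T:main-theorem3} via Lemma~\ref{L:non-concavity-Lorentz}), but your proposed test does not witness the failure. With plain characteristic functions $f_i=\chi_{[0,a_i]}$ and weights $\lambda_i=1/k$, write $g=\frac1k\sum_i\chi_{[0,a_i]}$ with $a_1\ge\dots\ge a_k$; Abel summation gives $\G_{L^{p,q}}(g)^q\gtrsim k^{-q/p}\sum_j j^{q/p-1}a_j^q$, while H\"older with exponents $q,q'$ gives $\big(\frac1k\sum_j a_j\big)^q\lesssim k^{-q/p}\sum_j j^{q/p-1}a_j^q$ for $1<p<q$ and arbitrary $a_j$. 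So \eqref{nearly} holds with a uniform constant on this class, and the growth rates $k^{1-1/q}$ versus $k^{1-1/p}$ you assert do not materialize. To break almost concavity you must vary the heights as well: take $f_i=\epsilon_i\chi_{[0,a_i]}$ with $\epsilon_i=4^{-i}$ and $a_i=4^{i/p}$, so that $\G_{L^{p,q}}(f_i)=(p/q)^{1/q}\epsilon_i^{1/p}a_i=(p/q)^{1/q}$ for all $i$. Then $\sum_i\lambda_i\G_{L^{p,q}}(f_i)$ is a fixed constant, whereas the rapid decay of $\epsilon_i$ makes the tails $\sum_{i\ge j}\epsilon_i$ comparable to $\epsilon_j$, and one computes $\G_{L^{p,q}}\big(\frac1k\sum_i f_i\big)\lesssim k^{1/q-1/p}\to0$: the left side of \eqref{nearly} collapses to an $\ell^q$-average of the numbers $\epsilon_j^{1/p}a_j$ rather than their $\ell^1$-average, and $q>p$ forces the constant to degenerate. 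With this repair (or by following the paper's route through condition (H)) your proof goes through.
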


Since $L^{p,p}(\Rn) = L^p(\Rn)$, Proposition~\ref{T:Lorentz}
recovers, in particular, the standard result, mentioned above,  that
the    norm $\|\cdot\|_{{L^{p}(\Rn)}}$ enjoys the Lebesgue
point property if, and only if, $1 \leq p < \infty$.
\\ This fact is also reproduced by the following proposition,
which concerns Orlicz norms $\|\cdot\|_{{L^{A}(\Rn)}}$ built upon a
Young function $A$.

\begin{proposition}\label{T:Orlicz} The Orlicz   norm
 $\|\cdot\|_{{L^{A}(\Rn)}}$  satisfies  the Lebesgue point property if, and only
if, the Young function $A$ satisfies the $\Delta_2$-condition near
infinity.
\end{proposition}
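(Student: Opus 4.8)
The plan is to derive Proposition~\ref{T:Orlicz} from Theorem~\ref{T:main-theorem2}, together with classical facts about Orlicz spaces, rather than by arguing from scratch. By the equivalence of \textup{(i)} and \textup{(ii)} in Theorem~\ref{T:main-theorem2}, the Luxemburg norm $\|\cdot\|_{L^A(\Rn)}$ satisfies the Lebesgue point property if and only if it is locally absolutely continuous and the Riesz--Wiener type inequality \eqref{E:riesz} holds for the operator $\M_{L^A}$. The key observation is that this second condition holds for \emph{every} Young function $A$: it is precisely the Riesz--Wiener inequality for Orlicz norms established in \cite{BP} and recalled in Remark~\ref{rem2'} (alternatively, it can be recovered within the present framework from Proposition~\ref{P:herz} combined with the sufficient condition of \cite{Le}). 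Hence the Lebesgue point property for $\|\cdot\|_{L^A(\Rn)}$ is equivalent to the local absolute continuity of this norm, and the proposition will follow once we show that $\|\cdot\|_{L^A(\Rn)}$ is locally absolutely continuous precisely when $A$ satisfies the $\Delta_2$-condition near infinity.

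For this last equivalence I would start from the observation that any non-increasing sequence $\{K_j\}$ of bounded measurable sets with $\bigcap_{j}K_j=\emptyset$ automatically satisfies $\mathcal L^n(K_j)\to 0$, by continuity of $\mathcal L^n$ from above (note $\mathcal L^n(K_1)<\infty$). Consequently the local absolute continuity of $\|\cdot\|_{L^A(\Rn)}$ reduces to the absolute continuity of the norm of the Orlicz space $L^A(K)$, over the finite-measure set $K$, for each bounded measurable $K\subseteq\Rn$; equivalently, via the local-separability reformulation mentioned in Section~\ref{Intro}, to the separability of each such $L^A(K)$. I would then invoke the classical theory of Orlicz spaces over a finite-measure set, according to which $L^A(K)$ has absolutely continuous norm --- equivalently, is separable --- exactly when $A$ obeys the $\Delta_2$-condition near infinity, a condition that is independent of the particular nontrivial finite-measure set. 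Made explicit, the two implications read: if $A\in\Delta_2$ near infinity, then for $u\in X_{\loc}(\Rn)$ supported in a bounded set $K$ one has $\int_{K_j}A(|u|/\lambda)\,d\mathcal L^n\to 0$ for each fixed $\lambda>0$ by dominated convergence (the dominating function $A(|u|/\lambda)\chi_K$ is integrable since $u\in L^A(K)$ and $A\in\Delta_2$), so $\|u\chi_{K_j}\|_{L^A(\Rn)}\le\lambda$ for large $j$, and this norm therefore tends to $0$; conversely, if $A\notin\Delta_2$ near infinity, the standard construction produces uncountably many functions supported in a fixed ball and pairwise at distance bounded away from zero in the $L^A$ norm, so $L^A$ over that ball is non-separable and $\|\cdot\|_{L^A(\Rn)}$ is not locally absolutely continuous.

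I do not anticipate a genuine obstacle: every step is either a citation (Theorem~\ref{T:main-theorem2}; inequality \eqref{E:riesz} for Orlicz norms; the $\Delta_2$-characterization of separability of finite-measure Orlicz spaces) or a routine measure-theoretic observation. The one point deserving some care is the reduction of the ``local'' hypotheses in the notion of local absolute continuity to the ordinary finite-measure Orlicz setting --- which is exactly what the continuity-from-above remark delivers. If one would rather not quote \cite{BP} for \eqref{E:riesz}, then one must instead supply a proof of that inequality for Orlicz norms through Proposition~\ref{P:herz}. A different route, via Theorem~\ref{T:main-theorem1}, would replace the appeal to \eqref{E:riesz} by the requirement that the functional $\mathcal G_{L^A}$ be almost concave for every Young function $A$ (the claim recorded in Remark~\ref{rem1'}); establishing this almost concavity is the only non-formal ingredient of that alternative, and I would consider it the more demanding of the two approaches.
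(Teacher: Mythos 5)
Your argument is correct, and the paper itself flags it as a viable route (see the opening of Section~\ref{S:examples}), but it is not the proof the paper actually writes out, and the difference is worth noting. For sufficiency, you reduce everything to the Riesz--Wiener inequality \eqref{E:riesz} for Orlicz norms, imported wholesale from \cite{BP} via Theorem~\ref{T:main-theorem2}; the paper instead gives a self-contained two-line verification of condition (H) of Lemma~\ref{L:absolute-continuity}: if $\|(f\chi_{I_k})^*\nt_{L^A(0,a_k)}>1$ then $a_k<\int_{I_k}A(|f(s)|)\,d\mathcal L^1(s)$ by the definition of the Luxemburg norm, and these integrals sum to $\int_0^1 A(|f|)\,d\mathcal L^1<\infty$ precisely because of the $\Delta_2$-condition; Proposition~\ref{P:3} and Theorem~\ref{T:main-theorem3} then finish the job with no appeal to \eqref{E:riesz} at all. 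So the paper's route buys self-containedness and isolates exactly where $\Delta_2$ enters, while yours buys brevity at the cost of an external citation (or, failing that, of proving \eqref{E:riesz} for Orlicz norms through Proposition~\ref{P:herz}, which is the real work). For necessity the two proofs coincide: the Lebesgue point property forces local absolute continuity, and the implication to $\Delta_2$ is the classical Orlicz-space fact from \cite[Section 3.4]{RR}; your sketch of both directions of that classical equivalence (dominated convergence for one, the non-separability construction for the other) is sound, and the reduction of the local hypotheses to a fixed bounded set via $K_j\subseteq K_1$ is exactly right. One small quibble: you call the almost-concavity route via Theorem~\ref{T:main-theorem1} ``the more demanding'' alternative, but the paper's Proposition~\ref{orliczconcave} dispatches it rather slickly, for $N$-functions, by writing the Amemiya norm as an infimum of affine functionals of the distribution function.
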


The last two results concern the so called Lorentz and Marcinkiewicz
endpoint norms \, \mbox{$\|\cdot\|_{{\Lambda_\varphi(\Rn)}}$}~and
$\|\cdot\|_{M_\varphi(\Rn)}$, respectively, associated with  a (non
identically vanishing)
 concave  function $\varphi : [0, \infty ) \to [0, \infty)$.

\begin{proposition}\label{T:Lambda} The Lorentz  norm $\|\cdot\|_{{\Lambda_{\varphi}(\Rn)}}$
satisfies  the Lebesgue point property if, and only if,
$\displaystyle \lim_{s\to 0^+}\varphi(s)=0$.
\end{proposition}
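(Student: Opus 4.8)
The plan is to verify the criterion supplied by Theorem~\ref{T:main-theorem1}, that is, to check separately the local absolute continuity of the Lorentz endpoint norm $\|\cdot\|_{\Lambda_\varphi(\Rn)}$ and the almost concavity of the associated functional $\mathcal G_{\Lambda_\varphi}$. Recall that $\|u\|_{\Lambda_\varphi(\Rn)}=\int_0^\infty u^*(s)\,d\varphi(s)$, so the representation norm $\overline{\Lambda_\varphi}(0,\infty)$ is given by the same expression applied to a non-increasing function on $(0,\infty)$. The key structural fact is that the measure $d\varphi$ has an atom at $0$ precisely when $\varphi(0^+)=\lim_{s\to 0^+}\varphi(s)>0$, and this atom is the sole obstruction to both properties we must examine.

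First I would treat the \emph{necessity} direction. Suppose $\varphi(0^+)=\beta>0$. Then for any $u\in\Lambda_\varphi(\Rn)$ with $u\not\equiv 0$ one has $\|u\|_{\Lambda_\varphi(\Rn)}\ge \beta\,\esup|u|$, so in particular $\Lambda_\varphi(\Rn)$ contains $L^\infty$ with an equivalent norm on sets of finite measure; more to the point, for a non-increasing sequence $\{K_j\}$ of bounded sets with $\cap_j K_j=\emptyset$ but $\mathcal L^n(K_j)>0$ for all $j$, and $u=\chi_{K_1}$, we get $\|u\chi_{K_j}\|_{\Lambda_\varphi(\Rn)}\ge \beta>0$ for every $j$, so the norm fails to be locally absolutely continuous, and Theorem~\ref{T:main-theorem1} forbids the Lebesgue point property. (Alternatively, one can exhibit failure of almost concavity of $\mathcal G_{\Lambda_\varphi}$, but the absolute-continuity route is cleaner.)

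For \emph{sufficiency}, assume $\varphi(0^+)=0$. Local absolute continuity of $\|\cdot\|_{\Lambda_\varphi(\Rn)}$ then follows from a standard argument: for $u\in\Lambda_{\varphi,\loc}(\Rn)$ and a non-increasing null sequence $\{K_j\}$, the rearrangements $(u\chi_{K_j})^*$ are supported in $[0,\mathcal L^n(K_j)]$ and decrease there, so $\|u\chi_{K_j}\|_{\Lambda_\varphi(\Rn)}\le \int_0^{\mathcal L^n(K_j)} u^*(s)\,d\varphi(s)\to 0$ as $j\to\infty$, using $\varphi(0^+)=0$ together with dominated convergence for the Stieltjes measure $d\varphi$ (here $u^*\in L^1_{\loc}$ suffices because $d\varphi$ is finite on bounded intervals). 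The remaining, and genuinely more delicate, step is to show that $\mathcal G_{\Lambda_\varphi}$ is almost concave on $\mathcal C$. Here I would use the explicit formula $\mathcal G_{\Lambda_\varphi}(f)=\|f^{-1}\|_{\overline{\Lambda_\varphi}(0,\infty)}=\int_0^\infty (f^{-1})^*(s)\,d\varphi(s)=\int_0^\infty f^{-1}(s)\,d\varphi(s)$, since $f^{-1}$ is already non-increasing; integrating by parts (Fubini for the layer-cake representation) this rewrites as $\int_0^\infty \varphi\bigl(|\{f>t\}|\bigr)\,dt$ up to boundary terms, i.e.\ as a kind of "$\varphi$-modular" of $f$. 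Concavity of $\varphi$ and the fact that $t\mapsto|\{f>t\}|$ behaves additively enough under convex combinations $\sum\lambda_i f_i$ should yield \eqref{nearly}; I expect the natural bound to come out with constant $c=1$, i.e.\ honest concavity, but allowing a constant does no harm.

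The main obstacle is this last point: pinning down precisely why $\mathcal G_{\Lambda_\varphi}$ inherits concavity from $\varphi$. The subtlety is that $f\mapsto f^{-1}$ is not linear, so one cannot pass the convex combination through the inverse directly; the correct device is the layer-cake identity $\int_0^\infty f^{-1}(s)\,d\varphi(s)=\int_0^\infty \varphi(m_f(t))\,dt$ where $m_f(t)=\mathcal L^1(\{s:f(s)>t\})$ is the distribution function of $f$, combined with the superadditivity $m_{\sum\lambda_i f_i}(\sum\lambda_i t_i)\ge$ (something controlling $\sum\lambda_i m_{f_i}(t_i)$) that holds for non-increasing $f_i$ valued in $[0,1]$. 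Once this reduction is in place, concavity of $\varphi$ (together with $\varphi(0)=0$, giving superadditivity $\varphi(a+b)\ge$ no — rather subadditivity, which is the wrong direction, so one must instead use concavity as $\varphi(\sum\lambda_i t_i)\ge\sum\lambda_i\varphi(t_i)$ directly) finishes the estimate. I would carry out the bookkeeping carefully there, as the direction of every inequality must be tracked; everything else is routine.
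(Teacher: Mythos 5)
Your overall strategy --- verifying the two conditions of Theorem~\ref{T:main-theorem1} --- is the same as the paper's, and both the necessity direction (failure of local absolute continuity when $\varphi(0^+)>0$) and the local absolute continuity part of the sufficiency direction are fine. The genuine gap is in the step you yourself flag as delicate: the almost concavity of $\mathcal G_{\Lambda_\varphi}$. The ``layer-cake identity'' you propose as the key device, namely $\int_0^\infty f^{-1}(s)\,d\varphi(s)=\int_0^\infty \varphi(m_f(t))\,dt$ with $m_f(t)=\mathcal L^1(\{s:f(s)>t\})$, is false: since $f^{-1}$ and $m_f$ coincide (a.e.) for non-increasing $f$, testing it on $f=b\chi_{(0,a)}$ with $0<b\le 1$ gives $a\varphi(b)$ on the left but $b\varphi(a)$ on the right. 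You have applied the distribution function to the wrong function; the correct layer-cake computation (distribution function of $f^{-1}$, not of $f$) yields
\begin{equation*}
\mathcal G_{\Lambda_\varphi}(f)=\int_0^{\mathcal L^1(\{f>0\})}\varphi\bigl(f(t)\bigr)\,d\mathcal L^1(t),
\end{equation*}
which is exactly the formula the paper establishes (Lemma~\ref{L:concavity-lorentz-endpoint}, via \eqref{Lorentzbis} and Fubini). Because your identity is wrong, the programme you sketch afterwards --- superadditivity of $m_{\sum\lambda_i f_i}$ under convex combinations --- is aimed at the wrong quantity and does not close; your own text visibly stalls at this point (``$\varphi(a+b)\ge$ no --- rather subadditivity, which is the wrong direction'').

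With the correct formula the concavity is in fact immediate and needs no manipulation of distribution functions: for $f_i\in\mathcal C$ the sets $\{f_i>0\}$ are intervals anchored at $0$, so $\mathcal L^1(\{\sum_i\lambda_i f_i>0\})=\max_i\mathcal L^1(\{f_i>0\})$, and integrating the pointwise inequality $\varphi(\sum_i\lambda_i f_i(t))\ge\sum_i\lambda_i\varphi(f_i(t))$ over this common interval (the terms with $f_i(t)=0$ contribute nothing) gives \eqref{nearly} with $c=1$. You should replace your layer-cake step by this derivation; everything else in your argument then stands.
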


\begin{proposition}\label{T:Marcinkiewicz} The  Marcinkiewicz  norm $\|\cdot\|_{M_{\varphi}(\Rn)}$
satisfies  the Lebesgue point property if, and only if,
$\displaystyle \lim_{s\to 0^+}\tfrac s{\varphi(s)}>0$, namely, if and only if,
$(M_{\varphi})_{\loc}(\Rn)= L^1_{\loc}(\Rn)$.
\end{proposition}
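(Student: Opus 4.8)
The plan is to prove the two implications by quite different means---the ``if'' part by reducing to the classical Lebesgue differentiation theorem, the ``only if'' part via the necessity of local absolute continuity recorded in Proposition~\ref{P:necessary}. I shall use throughout that a non-identically-vanishing concave $\varphi\colon[0,\infty)\to[0,\infty)$ is non-decreasing, is strictly positive on $(0,\infty)$, and has $\varphi(t)/t$ non-increasing, so that the condition $\lim_{s\to0^+}\tfrac{s}{\varphi(s)}>0$ means precisely that $L:=\sup_{t>0}\tfrac{\varphi(t)}{t}=\lim_{t\to0^+}\tfrac{\varphi(t)}{t}$ is finite. The opening step is to compute the averaged Marcinkiewicz norm over a ball $B$: writing $v^*$ for the decreasing rearrangement of $v\chi_B$ (extended by $0$), one has $\| v\nt_{M_\varphi(B)}=\sup_{t>0}\tfrac{\varphi(t)}{t\,\mathcal L^n(B)}\int_0^{t\,\mathcal L^n(B)}v^*\,d\mathcal L^1$; splitting the supremum at $t=1$ and using the monotonicity of $\varphi(t)/t$ should yield $\varphi(1)\,\| v\nt_{L^1(B)}\le\| v\nt_{M_\varphi(B)}\le L\,\| v\nt_{L^1(B)}$, \emph{with constants independent of $B$}, and, by the same computation, the un-normalized inequalities $\tfrac{\varphi(\mathcal L^n(K))}{\mathcal L^n(K)}\|v\chi_K\|_{L^1(\Rn)}\le\|v\chi_K\|_{M_\varphi(\Rn)}\le L\,\|v\chi_K\|_{L^1(\Rn)}$ on every bounded set $K$, the left-hand one without any hypothesis on $L$.

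For the sufficiency, assume $L<\infty$. The left-hand un-normalized inequality gives $(M_\varphi)_{\loc}(\Rn)\subseteq L^1_{\loc}(\Rn)$ always, and the right-hand one now gives the reverse inclusion, so $(M_\varphi)_{\loc}(\Rn)=L^1_{\loc}(\Rn)$, with equivalent local norms. Then, for $u\in(M_\varphi)_{\loc}(\Rn)=L^1_{\loc}(\Rn)$ and every $x$, the uniform comparison yields $\| u-u(x)\nt_{M_\varphi(B_r(x))}\le L\,\| u-u(x)\nt_{L^1(B_r(x))}$ for all $r>0$, and the right-hand side tends to $0$ as $r\to0^+$ for a.e.\ $x$ by~\eqref{leb in L1}; hence so does the left-hand side, and $\|\cdot\|_{M_\varphi(\Rn)}$ satisfies the Lebesgue point property. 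This also disposes of the equivalence with $(M_\varphi)_{\loc}(\Rn)=L^1_{\loc}(\Rn)$ in one direction.

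For the necessity, assume $L=\infty$. By Proposition~\ref{P:necessary} it suffices to show that $\|\cdot\|_{M_\varphi(\Rn)}$ is not locally absolutely continuous, and for this I shall build a radially decreasing $u$, supported in a ball $B_*$ of radius $r_*$ with $\mathcal L^n(B_*)=1$, with $u\in M_\varphi(\Rn)$ and $\varphi(t)u^{**}(t)\ge 1$ for every $t\in(0,1)$, where $u^{**}(t)=t^{-1}\int_0^t u^*\,d\mathcal L^1$. Radial monotonicity makes $(u\chi_{B_\rho})^{**}$ agree with $u^{**}$ on $(0,\mathcal L^n(B_\rho))$ for each ball $B_\rho$ centred at the origin, so along the punctured balls $K_j:=\{x\colon 0<|x|<r_*/j\}$ (bounded, non-increasing, with empty intersection) one gets $\| u\chi_{K_j}\|_{M_\varphi(\Rn)}\ge\sup_{0<t\le\mathcal L^n(K_j)}\varphi(t)u^{**}(t)\ge1$, which contradicts local absolute continuity. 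The naive choice $u^{**}=1/\varphi$, i.e.\ $u^*=(t/\varphi(t))'$, is \emph{not} admissible, since $t/\varphi(t)$ need not be concave---its right derivative jumps up at every corner of $\varphi$---so $u^*$ need not be non-increasing; this is the one genuine difficulty. The remedy is to replace $G(t):=t/\varphi(t)$ by its least concave majorant $\widetilde G$ on $[0,1]$: because $G$ is non-decreasing with $G(t)/t=1/\varphi(t)$ non-increasing and $G(0)=0$, it is quasiconcave and therefore $G\le\widetilde G\le 2G$, and taking $u^*=\widetilde G'$ on $(0,1)$ and $u^*=0$ on $[1,\infty)$ gives $u^{**}(t)=\widetilde G(t)/t$ on $(0,1)$ with $1/\varphi(t)\le u^{**}(t)\le 2/\varphi(t)$, and $u^{**}(t)=\widetilde G(1)/t$ for $t\ge1$; hence $\varphi(t)u^{**}(t)\le 2$ for all $t$ (using $\varphi(t)/t\le\varphi(1)$ when $t\ge1$ and $\widetilde G(1)\le 2/\varphi(1)$) while $\varphi(t)u^{**}(t)\ge1$ on $(0,1)$, and $\int_{\Rn}u=\widetilde G(1)<\infty$. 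Finally, running the same least-concave-majorant construction with $H(t):=\sqrt{t/\varphi(t)}$ in place of $G$ (again non-decreasing, with $H(t)/t$ non-increasing and $H(0)=0$) produces a radially decreasing $w\in L^1(\Rn)$, supported in $B_*$, for which $\varphi(t)w^{**}(t)\ge\sqrt{\varphi(t)/t}\to\infty$ as $t\to0^+$, so that $w\in L^1_{\loc}(\Rn)\setminus(M_\varphi)_{\loc}(\Rn)$; this shows $(M_\varphi)_{\loc}(\Rn)\subsetneq L^1_{\loc}(\Rn)$ when $L=\infty$ and completes the second equivalence in the statement.

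I expect the construction in the last paragraph to be the real obstacle: realizing the model profile $u^{**}\approx 1/\varphi$ as a bona fide decreasing rearrangement when $t/\varphi(t)$ is not concave. Once the passage to the least concave majorant and the elementary bound $\widetilde G\le 2G$ for quasiconcave $G$ vanishing at the origin are secured, everything else is routine manipulation of the averaged norm together with the two quoted results, \eqref{leb in L1} and Proposition~\ref{P:necessary}.
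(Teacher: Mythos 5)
Your proposal is correct and follows essentially the same route as the paper: sufficiency is reduced to the classical $L^1$ differentiation theorem via the equivalence $(M_{\varphi})_{\loc}(\Rn)=L^1_{\loc}(\Rn)$ (which you establish directly, with the uniform constants $\varphi(1)$ and $L$ for the averaged norms, where the paper cites Theorem~5.3 of Slav\'{\i}kov\'a), and necessity is obtained by defeating local absolute continuity with a radial $u$ satisfying $1\le\varphi(t)u^{**}(t)\le 2$, built from a concave function equivalent to the quasiconcave $t/\varphi(t)$ --- exactly the paper's construction, for which it invokes Proposition~5.10 of Chapter~2 in Bennett--Sharpley in place of your least-concave-majorant argument. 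Your only genuine addition is the $\sqrt{t/\varphi(t)}$ construction proving $(M_{\varphi})_{\loc}(\Rn)\subsetneq L^1_{\loc}(\Rn)$ when the limit vanishes, a direction of the final equivalence that the paper leaves implicit.
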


 When the present paper was almost in
 final form, it was pointed out to us by A.~Gogatishvili that the
Lebesgue point property of rearrangement-invariant spaces has also
been investigated in \cite{BS1, BS2, P, SS}. The analysis of those
papers  is however limited to the case of functions of one variable.
Moreover, the characterizations of
those norms having Lebesgue point property that are proved  there
are less explicit, and have a somewhat more technical nature.

\section{Background}\label{S:background}

In this section we recall some definitions and basic properties  of
decreasing rearrangements and rearrangement-invariant function
norms. For more details and proofs,  we refer to \cite{BS, PK}.

Let $E$ be a Lebesgue-measurable subset of $\Rn$, $n \geq 1$. The
Riesz space of
 measurable functions from $E$ into $[-\infty , \infty]$ is denoted by $L^0(E)$. We also set
$L^0_+(E)= \{u \in L^0(E) : u \geq 0 \ \hbox{a.e. in} \, E\}$, and
$L^0_0(E)= \{u \in L^0(E) : u \,\hbox{is finite}\, \hbox{a.e. in} \,
E\}$.  The \emph{distribution function} $u_* : [0, \infty)
\rightarrow [0, \infty]$ and the \emph{decreasing rearrangement}
$u^*: [0, \infty) \rightarrow [0, \infty]$ of a function $u\in
L^0(E)$  are defined by
\begin{equation}\label{distributionE}
u_*(t)= {\mathcal L^n}(\{y \in E:  |u(y)|>t\}) \qquad\text{for}\ t
\in  [0, \infty),
\end{equation}
and   by
\begin{equation}\label{decreasing rearrangementE}
u^*(s)= \inf\{t \geq 0:   u_*(t) \leq s\}  \qquad\text{for}\ s \in
[0, \infty),
\end{equation}
respectively.

\noindent The   Hardy-Littlewood inequality
tells us that
\begin{equation}\label{HL.0}
\int _E |u(y) v(y)|\, d\mathcal L^n(y) \leq \, \int_{0}^{ \infty
}u^{\ast}(s)v^{\ast}(s)\, d\mathcal L^1(s)  \,
\end{equation}
for every $u,v \in L^0(E)$. The  function $u^{**} :(0, \infty) \to
[0, \infty ]$, given by
\begin{equation}\label{u^**}
u^{\ast \ast}(s)\, = \, {\frac 1 s}\, \int_0^{s}u^*(t) d \mathcal
L^1 (t) \, \qquad \hbox{for} \ s \in \, (0,  \infty) \,,
\end{equation}
is non-increasing and satisfies  $u^* \leq u^{**}$. Moreover,
\begin{equation}\label{**}
(u+v)^{\ast \ast}  \, \leq \, u^{\ast \ast}  + v^{\ast \ast}
\end{equation}
for every $u, v \in L^0_+ (E)$.

A \emph{rearrangement-invariant norm}  is a functional $\|
\cdot\|_{{X(E)}}: {L^0(E)}
\rightarrow [0, \infty]$ such that
\begin{description}
\item[\rm (N1)] {$\|u+v\|_{{X(E)}} \, \leq \, \|u\|_{{X(E)}} + \|v\|_{{X(E)}}   \quad  \text{for all}\  u,v \in L^0_+(E)$;

\ $\|\lambda u\|_{{X(E)}} \,  = \,  |\lambda| \, \|u\|_{{X(E)}} \quad   \text{for all} \
{\lambda \in \R ,\,  u \in L^0(E)}$;

\ $\|u\|_{{X(E)}} \, >\, 0  \quad  \text{if} \  u   \, \text{does
not vanish a.e. in $E$;} $}

\item[\rm (N2)]   $\|u\|_{{X(E)}} \, \leq \, \|v\|_{{X(E)}}$   whenever    $0 \leq u \, \leq \, v$  a.e. in $E$;

\item[\rm (N3)]  $\sup_{_k}\|u_k\|_{{X(E)}} \, = \, \| u\|_{{X(E)}} $    if   $\{u_k\} \subset L^0_{+}(E)$   with    $u_k \nearrow u$   a.e. in $E$;

\item[\rm (N4)]  $\|\chi_{G}\|_{{X(E)}} \, < \, \infty$ for every measurable set $G\subseteq E$, such that $\mathcal L^n(G)< \infty$;

\item[\rm (N5)]   for every measurable set $G\subseteq E$, with $\mathcal L^n(G)< \infty$,  there exists a positive constant $C(G)$ such that
$\|u\|_{{L^1(G)}}  \leq C(G) \, \|u\chi_{G}\|_{{X(E)}}   \ \text{for all}\
u  \in {L^0(E)}$;

\item[\rm (N6)] $\|u\|_{{X(E)}} \, = \, \| v\|_{{X(E)}}$  for all $u, v \in  L^0(E)$  such that $u^\ast = v^\ast$.
\end{description}

\medskip
\par\noindent
The functional $\| \cdot\|_{{X(E)}}$ is a norm in the standard
sense when restricted to the set
\begin{equation}\label{space X1}
X(E)=\{u \in L^0(E):  \|u\|_{{X(E)}}< \infty\}.
\end{equation}
The latter is a Banach space  endowed with such norm, and is called
a rearrangement-invariant Banach function space, briefly, a
\emph{rearrangement-invariant space}.
\par
Given a measurable subset $E'$ of $E$ and a function $u \in
L^0(E')$, define the function $\widehat u \in L^0(E)$ as
$$\widehat u(x) = \begin{cases} u(x) & \hbox{if $x \in E'$},
\\ 0 & \hbox{if $x \in E\setminus E'$.}
\end{cases}$$
Then the functional $\|\cdot\|_{X(E')}$ given by
$$\|u\|_{X(E')}=\|\widehat u\|_{X(E)}$$
for $u \in L^0(E')$ is a rearrangement-invariant norm.
%
%
%\textcolor{red}{When $E'\subset E$ is a~measurable set and a~rearrangement-invariant norm $\|\cdot\|_{X(E)}$ is given, we write
%$\|f\|_{X(E')}=\|f\chi_{E'}\|_{X(E)}$ for $f\in L^0(E')$.}
\par
 If $\mathcal L^n (E) < \infty$, then
\begin{equation}\label{imm}
L^\infty (E)     \to  X(E)     \to L^1(E),
% &X(E)  \hookrightarrow  Y(E)     \  \text{if, and only if,} \  Y' (E)   \hookrightarrow\  X'(E)  \  \text{and the  embedding constants are the}\\
%&\notag\text{same.}
\end{equation} where $\to$ stands for a continuous embedding.
\\
The local r.i. space $X_{\text{loc}}(E)$ is defined as
\begin{equation}\nonumber
X_{\rm loc}(E)   \ = \{ u \in \ L^0(E): \  u \chi_K \in X (E) \,  \
\text{for every bounded measurable set} \,   K \subset E
 \}.
\end{equation}
 The {\it fundamental function} of $
X(E)$  is defined by
\begin{equation}\label{f.f.}\varphi_{{X(E)}}(s)=  \, \| \chi_{G}\|_{{X(E)}}  \, \qquad
\hbox{for} \ s \in [0,\mathcal L ^n (E)),
\end{equation}  where $G$ is any measurable  subset of $E$ such that $\mathcal L^n(G)=s$.
 It is
non-decreasing on $[0,\mathcal L ^n (E))$, $\varphi_{X(E)}(0)=0$ and
$\varphi_{X(E)}(s)/s$ is non-increasing for $s \in \, (0,\mathcal L ^n
(E))$.
\par
\noindent Hardy's Lemma tells us that, given  $u, v \in L^0(E)$ and
any rearrangement-invariant norm $\| \cdot \|_{{X(E)}}$,
\begin{equation} \label{Hardy}
\hbox{if}\,\, u^{**}  \leq v^{**}, \,\, \hbox{then} \,\, \|u
\|_{{X(E)}} \leq \| v \|_{{X(E)}}.
 \end{equation}

The \emph{associate rearrangement-invariant norm} of $\|
\cdot\|_{{X(E)}}$ is the rearrangement-invariant norm $\|
\cdot\|_{X'(E)}$ defined by
\begin{equation} \label{n.assoc.}
\|v\|_{X'(E)}=  \sup \big\{\smallint _E |u(y)v(y)|\, d\mathcal
L^n(y):\, u \in L^0(E), \, \| u \|_{{X(E)}} \leq 1 \big\}.
\end{equation}
The corresponding rearrangement-invariant space $X'(E)$  is called
the \textit{associate space} of $X(E)$. The H\"{o}lder type
inequality
\begin{equation}\label{H}
\int _E |u(y)v(y)|\, d\mathcal L^n(y)  \leq  \| u\|_{{X(E)}}
\|v\|_{{X'(E)}}
\end{equation}
holds for every $u \in X(E)$ and $v \in X'(E)$. One has that $X(E) \
= X^{''} (E)$.
\\
%A \emph{representation norm} for   $\| \cdot\|_{{X(E)}}$ is a
% rearrangement-invariant norm $\| \cdot\|_{\overline X(0,\mathcal L^n
%(E))}$   such that
%\begin{equation}\label{repr2}
% \| u\|_{X(E)} = \| u^*\|_{\overline X(0,\mathcal L^n (E))}
%\end{equation}
% for every $u \in X(E)$.

The rearrangement-invariant norm, defined as
$$\| f\|_{\overline X(0,\mathcal L^n (E))} = \sup _{\|u\|_{X'(E)}\leq
1} \int _0^\infty f^*(s) u^*(s)\, d \mathcal L^1 (s) $$ for $f \in
L^0(0, \mathcal L^n (E))$, is a \emph{representation norm} for $\|
\cdot\|_{{X(E)}}$. It has the property that
\begin{equation}\label{repr2}
 \| u\|_{X(E)} = \| u^*\|_{\overline X(0,\mathcal L^n (E))}
\end{equation}
 for every $u \in X(E)$. For customary rearrangement-invariant norms, an expression for $\|
\cdot \|_{\overline X(0,\mathcal L^n (E))}$   is immediately derived
from that of $\| \cdot\|_{{X(E)}}$.

%We shall focus solely on the case when $E=\mathbb R\sp n$. So from
%now on we shall restrict ourselves to this situation.
%
%Given any $\lambda>0$, the \textit{dilation operator} $T_{\lambda}$,
%defined at $f\in \M(\mathbb R\sp n)$ by
%$$
%  (T_{\lambda}f)(x)=  f(\lambda\sp{-1}x)  \qquad \mbox{for} \ x\in\mathbb R\sp n,
%  $$
%is bounded on any rearrangement-invariant~space $X(\mathbb R\sp n)$.

The \textit{dilation operator} $D_{\delta}: \overline X(0,\mathcal
L^n (E)) \to \overline X(0,\mathcal L^n (E))$ is defined for $\delta
> 0$ and $f \in \overline X(0,\mathcal L^n (E))$ as
\begin{equation}\label{dilationop}
  (D_{\delta}f)(s)=  \begin{cases}f(s \delta)  & \mbox{if} \ s \delta\in(0,\mathcal L^n (E)), \\ 0 & \mbox{otherwise},\end{cases}
\end{equation}
 and is bounded \cite[Chap.~3, Proposition~5.11]{BS}.

We shall make use of the
 subspace $\overline X_1(0,\infty)$ of $\overline X(0,\infty)$
 defined as
\begin{equation}\label{X1}
\overline X_1(0, \infty) = \{f \in \overline X(0, \infty): \, f(s)=
0 \,\, \hbox{for a.e.}\, s >1\}.
\end{equation}

Now, assume  that $E$ is a measurable positive cone in  $\Rn$ with
vertex at $0$,  namely, a measurable set which is closed under
multiplication by positive scalars. In what follows, we shall focus
 the nontrivial case when $\mathcal L^n(E)$ does not vanish, and
 hence $\mathcal L^n(E)=\infty$.
Let $\| \cdot\|_{{X(E)}}$ be a rearrangement-invariant norm, and let
$G$ be a measurable subset of $E$  such that $0<{\mathcal L}^n(G) <
\infty$.  We define the functional $\|\,\cdot\, \nt_{{X (G)}}$ as
\begin{equation}\label{normaaverage'}
\|u\nt_{{X (G)}}= \big\|(u\chi _G) (\sqrt[n]{{\mathcal L}^n(G)}  \
\cdot )\big\|_{ { X(E)}}
\end{equation}
for $u \in L^0(E)$.
We call it the {\it averaged norm of $\| \cdot\|_{{X (E)}}$ on $G$},
since
\begin{equation}\label{due
misure} \|u\nt_{{X (G)}}= \|u \chi _G\|_{{X \big(G, \frac{{\mathcal
L}^n}{{\mathcal L}^n(G)}\big)}}
\end{equation}
for $u \in L^0(E)$, where $\|\cdot \|_{{X \big(G, \frac{{\mathcal
L}^n}{{\mathcal L}^n(G)}\big)}} $ denotes the
rearrangement-invariant norm, defined as $\|\cdot\|_{X(G)}$, save
that the Lebesgue measure ${\mathcal L}^n$ is replaced with the
normalized Lebesgue measure $\frac{{\mathcal L}^n}{{{\mathcal
L}^n(G)}}$.
 Notice that
\begin{equation}\label{normaaverage}
\|u\nt_{{X (G)}}= \|(u\chi _G)^\ast  ({\mathcal L}^n(G) \, \cdot
)\|_{ {\overline X(0,\infty)}}
\end{equation}
for $u \in L^0(E)$.
Moreover,
\begin{equation}\label{norm1}
  \|1\nt_{{X(G)}} \quad \hbox{is independent of $G$.}
  \end{equation}
The H\"older type inequality for averaged norms takes the form:
\begin{equation}\label{H-in}
\frac 1{\mathcal L^n (G)} \int _G |u(y)v(y)|\, d\mathcal L^n (y)
\leq \| u\nt_{{X(G)}} \ \| v\nt_{{X'(G)}}\,
\end{equation}
for $u , v \in L^0(E)$.

We conclude this section by recalling  the definition of some
customary, and less standard, instances of rearrangement-invariant
function norms of use in our applications. In what follows, we set
$p'=\frac{p}{p-1}$ for $p \in (1, \infty)$, with the usual
modifications when $p=1$ and $p=\infty$. We also adopt the
convention that $1/\infty = 0$.
 %Moreover,   $G$ denotes a measurable
%set $G$ in $\mathbb R^n$ with ${\mathcal L}^n(G) < \infty$.
\par
 \noindent
Prototypal examples  of rearrangement-invariant function norms are
the classical Lebesgue norms. Indeed, $\|u\|_{{L^p(\mathbb R\sp n)}}=
\|u^\ast\|_{{L^p(0,\infty)}}$, if $p \in [1, \infty)$, and
$\|u\|_{_{L^\infty(\mathbb R\sp n)}}= u^\ast(0)$.

Let   $p, q \in [1, \infty]$. Assume that either $1<p<\infty$ and $1\leq
q\leq\infty$, or $p=q=1$, or $p=q=\infty$. Then the functional
defined as
\begin{equation}\label{Lorentz}
\|u\|_{{L^{p,q}(\mathbb R\sp n)}}=\| {s}^{\frac{1}{p} - \frac{1}{q}}\
u^{\ast}(s)\|_{{L^{q} (0,\infty)}}
\end{equation}
for $u \in L^0(\mathbb R\sp n)$, is equivalent (up to multiplicative
constants) to a rearrangement-invariant  norm. The corresponding
rearrangement-invariant  space is called a \textit{Lorentz space}.
 Note that $\|\cdot\|_{{L^{p,q}(0,\infty)}}$ is the representation norm for   $\|\cdot\|_{{L^{p,q}(\mathbb R\sp n)}}$, and $L\sp{p,p}(\mathbb R\sp n)=L\sp p(\mathbb R\sp n)$. Moreover,
 $L\sp{p, q}(\mathbb R\sp n) \to   L\sp{p, r}(\mathbb R\sp n)$ if  $1\leq
 q < r \leq\infty$.

 Let $A$ be a  Young function, namely a
left-continuous convex function from $[0, \infty)$ into $[0,
\infty]$, which is neither identically equal to $0$, nor to
$\infty$. The {\it Luxemburg  rearrangement-invariant  norm}
associated with $A$ is defined as
\begin{equation}\label{Orlicz}
\| u \|_{{L^{A}(\mathbb R\sp n)}} =  \ \inf \Bigg\{ \lambda > 0 \ :
\ \int_{\mathbb R\sp n} A \bigg(\frac{|u(x)|}{\lambda} \bigg)\,
d\mathcal L^n (x) \ \leq \ 1 \Bigg\}
\end{equation}
for  $u \in L^0(\mathbb R\sp n)$.
 Its representation norm is $\| u \|_{{L^{A}(0,\infty)}}$.
 The space  $L^{A}(\mathbb R\sp n)$ is
called an {\it Orlicz space}. In particular, $L^{A}(\mathbb R\sp n)=
L^{p}(\mathbb R\sp n)$ if $A(t)=t^p$ for $p \in  [1, \infty)$, and
$L^{A}(\mathbb R\sp n)= L^{\infty}(\mathbb R\sp n)$ if $A(t)=\infty
\chi_{_{(1, \infty)}}(t)$.
\\
Recall that  $A$ is said to satisfy the  \emph{$\Delta
_2$--condition near infinity} if it is finite valued  and there
exist constants $C>0$ and $t_0\geq 0$ such that
\begin{equation}\label{delta2}A(2t) \leq C A(t) \, \qquad
\text{for} \ t \in [t_0, \infty)\,.
\end{equation}
If $A$  satisfies the  $\Delta _2$--condition near infinity, and $u
\in L^{A}(\mathbb R\sp n)$ has support of finite measure, then
$$\int _{\mathbb R^n}A(c|u(x)|)d\mathcal L^n (x) < \infty$$
for every positive number $c$.
\\
A subclass of Young functions which is often considered in the
literature is that of the so called $N$-functions. A Young function
$A$ is said to be an $N$-function if it is finite-valued, and
$$\lim _{t\to 0^+}\frac {A(t)}t =0\, \qquad \lim _{t\to \infty}\frac {A(t)}t
=\infty \,.$$
\par
Let $\varphi : [0, \infty) \to [0, \infty)$ be a  concave function
 which does not vanish identically.
  Hence, in particular, $\varphi$ is non-decreasing, and $\varphi(t)>0$ for $t\in(0,\infty)$.  The {\it Marcinkiewicz}
 and {\it Lorentz endpoint norm}
associated with $\varphi$ are defined as
\begin{equation}\label{Marcinkiewicz}
\| u \|_{{M_{\varphi}(\mathbb R\sp n)}} =  \ \sup_{s \in (0,\infty)}
u^{\ast \ast}(s) \varphi(s),
\end{equation}
\begin{equation}\label{Lorentz w}
\| u \|_{{\Lambda_{\varphi}(\mathbb R\sp n)}} =  \ \int_0^{\infty}   u^{\ast}(s) d \varphi(s) ,
\end{equation}
for  $u \in L^0 (\mathbb R\sp n)$, respectively.
 The representation norms are $\|\cdot  \|_{{M_{\varphi}(0,\infty)}}$ and $\| \cdot \|_{{\Lambda_{\varphi}(0,\infty)}}$, respectively.
 The spaces
$M_{\varphi}(\mathbb R\sp n)$ and $\Lambda_{\varphi}(\mathbb R\sp n)$ are
called
  {\it Marcinkiewicz endpoint space} and  {\it Lorentz endpoint space} associated with $\varphi$.  The   fundamental functions of $
M_{\varphi}(\mathbb R\sp n)$  and $\Lambda_{\varphi}(\mathbb R\sp
n)$ coincide with $\varphi$. In fact,   $ M_{\varphi}(\mathbb R\sp
n)$  and $\Lambda_{\varphi}(\mathbb R\sp n)$ are respectively the
largest and the smallest rearrangement-invariant space whose
fundamental function is $\varphi$, and this accounts for the
expression \lq\lq endpoint" which is usually attached to their
names.
 Note the alternative expression
\begin{equation}\label{Lorentzbis}
\| f \|_{\Lambda_{\varphi}(0,\infty)} =  f^*(0) \varphi
(0^+) +  \int_0^{\infty} f^{\ast}(s)  \varphi '(s) d\mathcal L\sp 1 (s),
\end{equation}
for  $f \in L^0 (0,\infty)$, where $\displaystyle \varphi (0^+) = \lim _{s \to 0^+}
\varphi (s)$.

\section{A necessary condition: local absolute continuity}\label{localabs}

In the present section we are mainly concerned with a proof of the following necessary conditions for a rearrangement-invariant norm to satisfy the Lebesgue point property.

\begin{proposition}\label{P:necessary}
If $\|\cdot\|_{{X(\Rn)}}$ is  a~rearrangement-invariant norm
satisfying the Lebesgue point property, then:

\begin{enumerate}
\item[\ {\rm(i)}] $\|\cdot\|_{{X(\Rn)}}$ is locally absolutely continuous;
\par\noindent
\item[{\rm(ii)}] $X(\Rn)$ is locally separable.
\end{enumerate}
\end{proposition}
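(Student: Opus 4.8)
The plan is to establish (i) first, by contraposition: assuming the norm $\|\cdot\|_{X(\Rn)}$ is \emph{not} locally absolutely continuous, I would construct a function $u \in X_{\loc}(\Rn)$ for which the Lebesgue point property fails on a set of positive measure. By the definition of local absolute continuity recalled in the introduction, failure means there is a bounded measurable set $K$, a function $w \in X(\Rn)$ supported in $K$, and a non-increasing sequence $\{K_j\}$ of measurable subsets of $K$ with $\bigcap_j K_j = \emptyset$ such that $\|w\chi_{K_j}\|_{X(\Rn)} \not\to 0$; equivalently, using rearrangement-invariance and the representation norm, there is a function $g \in \overline X_1(0,\infty)$, non-increasing, with $\lim_{s\to 0^+}\|g\chi_{(0,s)}\|_{\overline X(0,\infty)} = a > 0$. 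The idea is to spread scaled copies of such a "bad" function over a grid of shrinking balls filling $\Rn$, so that near a.e.\ point $x$ the averaged norm $\|u - u(x)\nt_{X(B_r(x))}$ cannot tend to $0$ as $r \to 0^+$. The natural mechanism: choose $u$ so that on arbitrarily small balls $B_r(x)$ the function $u$, after subtracting the constant $u(x)$ and rescaling to the normalized measure, has a decreasing rearrangement dominating $g$ on an initial interval $(0,\sigma)$ with $\sigma$ bounded below independently of $r$; then \eqref{normaaverage} forces $\|u - u(x)\nt_{X(B_r(x))} \geq \|g\chi_{(0,\sigma)}\|_{\overline X(0,\infty)} \geq a/2 > 0$ along a sequence $r \to 0^+$.

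Concretely, I would fix a sequence of radii $r_m \downarrow 0$ and a countable dense family of points, and on each ball $B_{r_m}(x_i)$ place a rescaled indicator-type perturbation built from $g$, taking care that (a) the resulting $u$ is locally in $X(\Rn)$ — this uses (N1) and the fact that the pieces can be taken with disjoint supports or with summable norms, and (b) for a.e.\ $x$ (by a density/Borel--Cantelli argument, or simply by the translation structure if one uses a dyadic grid) infinitely many of these balls contain $x$ and witness the lower bound. The cleanest implementation is probably to work first on a single fixed ball $B$ and observe that, by \eqref{due misure} and \eqref{normaaverage'}, the failure of local absolute continuity of $\|\cdot\|_{X(\Rn)}$ is equivalent to the failure of local absolute continuity of the averaged norm $\|\cdot\nt_{X(B)}$, and this property is insensitive to the dilation needed to pass between $B_r(x)$ and $B$; then one transplants a bad configuration from $B$ into a null-covering family of balls.

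For part (ii), I would invoke the standard equivalence, valid in any rearrangement-invariant space, between local absolute continuity of the norm and local separability of the space: a rearrangement-invariant norm on a set of finite measure is absolutely continuous if and only if the associated space is separable (this is classical — see \cite{BS}), and "local" versions of both statements are obtained by restricting to functions supported in a fixed bounded set $K$, where the measure is finite and the classical theory applies verbatim. Thus (ii) follows immediately from (i). The main obstacle I anticipate is in (i): producing a single function $u \in X_{\loc}(\Rn)$ that simultaneously lies in the space and defeats the Lebesgue point property at almost every point — the tension is between keeping $\|u\chi_K\|_{X(\Rn)}$ finite (which pushes the bad perturbations to be small) and keeping the averaged norms bounded away from $0$ on small balls (which, after renormalization by $\mathcal L^n(B_r(x))$, is exactly what a non-absolutely-continuous norm permits). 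Getting the bookkeeping right so that the "bad" events occur i.o.\ at a.e.\ $x$ while the global $X$-norm stays controlled is the crux; a geometric Vitali-type covering by shrinking balls, combined with the scale-invariance of the averaged norm in \eqref{normaaverage}, should make this work.
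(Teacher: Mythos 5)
Your part (ii) is fine: it is exactly the paper's step (the equivalence of local absolute continuity and local separability is quoted from \cite[Chap.~1, Corollary 5.6]{BS}). Your overall strategy for part (i) --- contraposition plus transplanting a ``bad'' configuration into a family of shrinking balls covering a.e.\ point --- is also the paper's strategy in spirit. But the specific mechanism you propose has a genuine gap, and it is precisely at the point you yourself flag as ``the crux''. You want every small ball $B_r(x)$ witnessing the failure to contain a copy of a \emph{fixed} profile $g\chi_{(0,\sigma)}$ at relative density $\sigma$ bounded below independently of $r$. That cannot be arranged: if at each scale $r_m$ of a grid you reproduce the profile $g\chi_{(0,\sigma)}$ at fixed relative density in enough cells that a.e.\ point is covered at infinitely many scales, then (second Borel--Cantelli direction) the measures of the covered sets must not be summable, and the superposition of all these copies has distribution function at least $\big(\sum_m p_m\big)\cdot\sigma\, g_*(t)=\infty$ for $t$ below $\esup g$, so $u\notin X_{\loc}(\Rn)$; conversely, if you keep the total measure of the pieces summable so that $\|u\|_{X(\Rn)}$ stays finite, the set of points lying in infinitely many of the bad balls is null by Borel--Cantelli and you defeat the Lebesgue point property only on a null set.

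The missing idea is a \emph{dilution} device, which in the paper is packaged as condition (H) of Lemma~\ref{L:absolute-continuity}. One takes a single $f\in\overline X_1(0,\infty)$, splits it into pairwise disjoint pieces $f\chi_{I_k}$ (so the global norm is controlled by $\|f\|_{\oX(0,\infty)}$ and $\sum_k\mathcal L^1(I_k)\le 1$), and places the $k$-th piece inside a slab of length $a_k\ge\mathcal L^1(I_k)$ with $\sum_k a_k=\infty$; the slabs, wrapped repeatedly around the unit cube, then cover a.e.\ point at infinitely many shrinking scales. The profile seen in the witnessing ball at scale $a_k$ is \emph{not} $g\chi_{(0,\sigma)}$ with $\sigma$ bounded below: its relative support $\mathcal L^1(I_k)/a_k$ tends to $0$, and what keeps the averaged norm above $1$ despite this vanishing density is exactly the inequality $\|(f\chi_{I_k})^*\nt_{\overline X(0,a_k)}>1$ --- which the failure of local absolute continuity supplies with $a_k\equiv 1$ (Lemma~\ref{L:absolute-continuity-bis}: pieces $g^*\chi_{(b_{k+1},b_k)}$ of norm $>1$ sitting in intervals of length $1$), normalized afterwards so that $a_k\to 0$ while $\sum_k a_k=\infty$ is preserved (Lemma~\ref{T:lemma_conc} guarantees the averaged norm only increases when $a_k$ decreases). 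Two further details your plan leaves open and the paper must handle: the witnessing ball is not one of the grid cells but a ball centered at $x$ containing the relevant slab, which costs a dilation constant via \eqref{dilationop}; and one needs a positive-measure set on which $u(x)=0$ (forced by $\sum_k\mathcal L^1(I_k)<1$) so that $\|u-u(x)\nt_{X(B_r(x))}=\|u\nt_{X(B_r(x))}$ there and the positive $\limsup$ actually contradicts \eqref{leb in X2}.
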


 The proof of Proposition~\ref{P:necessary} is split
in two steps, which are the content of the next two lemmas.

\begin{lemma}\label{L:absolute-continuity}

Let $\|\cdot\|_{{X(\Rn)}}$ be a    rearrangement-invariant norm
which satisfies the Lebesgue point property. Then:
\begin{enumerate}
\item[\textup{(H)}] Given any function $f\in  \overline{X}_1(0,\infty)$, any sequence $\{I_k\}$ of pairwise disjoint intervals in $(0,1)$,
 and any sequence $\{a_k\}$ of positive numbers such that $a_k\geq \mathcal L^1(I_{k})$ and
\begin{equation}\label{E:ak}
\|(f\chi_{I_k})^*\nt_{{ \overline{X}(0,a_k)}}>1,
\end{equation}
one has that $$\sum_{k=1}^\infty a_k<\infty.$$
\end{enumerate}
\end{lemma}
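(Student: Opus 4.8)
The plan is to prove the contrapositive: assuming $\sum_{k}a_k=\infty$, construct a function $u\in X(\Rn)$ with bounded support for which the Lebesgue point property fails, and conclude that $\|\cdot\|_{X(\Rn)}$ cannot satisfy it.

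First I would reformulate the hypotheses and isolate the basic building block. By \eqref{normaaverage}, condition \eqref{E:ak} says exactly that $\|(f\chi_{I_k})^*(a_k\,\cdot)\|_{\overline{X}(0,\infty)}>1$, and since $a_k\geq\mathcal L^1(I_k)$ the function $(f\chi_{I_k})^*(a_k\,\cdot)$ is supported in $(0,1)$. Given any ball $B\subseteq\Rn$ and any index set, choose pairwise disjoint sub-balls $B_k\subseteq B$ with $\mathcal L^n(B_k)\leq a_k$ and put on each $B_k$ the radially decreasing $u_k\geq0$ with $(u_k)^*(t)=(f\chi_{I_k})^*\bigl(a_k t/\mathcal L^n(B_k)\bigr)$. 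Then $\|u_k\nt_{X(B_k)}=\|(f\chi_{I_k})^*(a_k\,\cdot)\|_{\overline{X}(0,\infty)}>1$ by \eqref{normaaverage}; on the other hand, because the $I_k$ are pairwise disjoint, the distribution function of $\sum_k u_k$ is dominated by that of $f$ (here one uses $\mathcal L^n(B_k)\leq a_k$ crucially), so $\bigl(\sum_k u_k\bigr)^*\leq f^*$ and hence $\bigl\|\sum_k u_k\bigr\|_{X(\Rn)}\leq\|f\|_{\overline{X}(0,\infty)}<\infty$ by (N2) and (N6). Thus one may accommodate arbitrarily many — and, exploiting $\sum a_k=\infty$, arbitrarily ``heavy'' re-groupings of — such blocks at the cost of a bounded $X$-norm.

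Next I would fix a cube $\Omega$ and distribute these blocks over $\Omega$ on a pairwise disjoint family of balls whose radii tend to $0$, whose union has measure strictly less than $\mathcal L^n(\Omega)$, and which is cofinal at every scale, in the sense that for a.e.\ $x\in\Omega$ and every sufficiently small $r>0$ at least one ball of the family is contained in $B_r(x)$; let $u$ be the resulting single function. Then $u\in X(\Rn)\subseteq L^1_{\loc}(\Rn)$ by (N5), so $u$ is finite a.e., and a Borel--Cantelli / reverse-Fatou argument over the scales yields a set $E\subseteq\Omega\setminus\bigcup(\text{balls})$ of positive measure with $u(x)=0$ and $\limsup_{r\to0^+}\|u\nt_{X(B_r(x))}>0$ for a.e.\ $x\in E$. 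Since, by the triangle inequality and \eqref{norm1},
\[
\|u-u(x)\nt_{X(B_r(x))}\ \geq\ \|u\nt_{X(B_r(x))}-|u(x)|\,\|1\nt_{X},
\]
it follows that $\|u-u(x)\nt_{X(B_r(x))}\not\to0$ as $r\to0^+$ for a.e.\ $x\in E$, which contradicts \eqref{leb in X2}.

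The main obstacle, as I see it, is precisely the last clause: guaranteeing that the test balls $B_r(x)$ centred at the bad points actually ``see'' the blocks, i.e.\ that $\|u\nt_{X(B_r(x))}$ stays bounded below. By the Lebesgue density theorem, a disjoint family of balls accumulating at a positive‑measure set necessarily occupies a vanishing fraction of $B_r(x)$ as $r\to0^+$; hence the blocks must be designed so that their averaged norm degrades only mildly when one passes from a block's own ball to a much larger concentric ball — a quantitative statement about the action of the dilation operators $D_\delta$ of \eqref{dilationop} on $\overline{X}(0,\infty)$. Concretely one needs $\lim_{b\to\infty}\|g^*\nt_{\overline{X}(0,b)}>0$ for the profile(s) $g$ out of which the blocks are built, and this is exactly where $\sum a_k=\infty$ is used: together with \eqref{E:ak} it forces $f^*$ — after a suitable re‑grouping of the building blocks — to blow up near the origin at least at the rate of the inverse of the fundamental function $\varphi_X$, which makes this limit positive. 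In the complementary case $\lim_{s\to0^+}\varphi_X(s)/s<\infty$ (equivalently $X_{\loc}(\Rn)=L^1_{\loc}(\Rn)$), no such configuration can occur: the embedding $\Lambda_{\varphi_X}\hookrightarrow\overline{X}$ together with \eqref{E:ak} already gives $\int_{I_k}|f|\geq c\,a_k$ for a constant $c>0$, whence $\sum_k a_k\leq c^{-1}\|f\|_{L^1(0,1)}<\infty$ by (N5), and the statement holds with nothing to prove.
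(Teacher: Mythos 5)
You have correctly located the crux, but the escape route you propose for it does not exist, so the proof has a genuine gap at its decisive step. Your block construction places radially decreasing bumps on pairwise disjoint balls $B_k$ accumulating at a positive-measure set $E$ of points where $u=0$. As you yourself note, the Lebesgue density theorem then forces $\mathcal L^n\bigl(B_r(x)\cap\bigcup_k B_k\bigr)/\mathcal L^n(B_r(x))\to0$ as $r\to0^+$ for a.e.\ $x\in E$, so any block ball contained in $B_r(x)$ has measure $o(\mathcal L^n(B_r(x)))$; passing from the block's own averaged norm to $\|u\nt_{X(B_r(x))}$ therefore costs a dilation $D_\delta$ with $\delta\to0$, whose operator norm is unbounded. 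Your fix is to build the blocks from a profile $g$ with $\lim_{b\to\infty}\|g^*\nt_{\overline X(0,b)}>0$. But $\|g^*\nt_{\overline X(0,b)}=\|g^*(b\,\cdot)\|_{\overline X(0,\infty)}\le\|g^*\chi_{(0,1/b)}\|_{\overline X(0,\infty)}$ for $g\in\overline X_1(0,\infty)$ and $b\ge1$, so such a profile exists only if the norm fails the absolute-continuity property \eqref{E:sufficient}. In the spaces where this lemma has real content --- above all $L^{p,q}(\Rn)$ with $1\le p<q<\infty$, which is how Proposition~\ref{T:Lorentz} is disproved --- every $g$ satisfies $\|g^*\nt_{L^{p,q}(0,b)}=b^{-1/p}\|g^*\|_{L^{p,q}(0,\infty)}\to0$, so no admissible profile exists, yet condition (H) does fail and the lemma must produce a contradiction to the Lebesgue point property. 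Hence your construction cannot be completed in exactly the cases that matter; the concluding heuristic that $\sum a_k=\infty$ forces $f^*$ to blow up like $1/\varphi_X$ is neither proved nor true in general.

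The paper avoids the density obstruction by changing the geometry of the blocks rather than their profiles. After normalizing so that $a_k\to0$ and $\sum_k\mathcal L^1(I_k)<1$, it lays the rearranged pieces $(f\chi_{I_k})^*$ side by side on consecutive intervals $J_k$ of length $a_k$ covering $(0,\infty)$, and transfers this to the unit cube by $u(y)=\sup_k g(y_1+k-1)\,\chi_{(0,1)^n}(y)$: each block is a \emph{slab}, constant in the last $n-1$ coordinates. Since $\sum a_k=\infty$, for a.e.\ $x\in(0,1)^n$ and infinitely many $k$ there is a slab of thickness $a_{l_k}$ whose first-coordinate projection contains $x_1$; the ball $B_{\sqrt n\,a_{l_k}}(x)$ then contains a portion of that slab occupying a \emph{fixed dimensional fraction} of the ball, so only a bounded dilation is incurred and $\|u\nt_{X(B_r(x))}$ stays bounded below along $r=\sqrt n\,a_{l_k}\to0$. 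The set $\{u=0\}$ still has positive measure because $\sum_k\mathcal L^1(I_k)<1$ (the slabs are mostly filled with zeros of $(f\chi_{I_k})^*$, which is why the density of $\{u\neq0\}$ can vanish while the norm does not --- this is exactly what hypothesis \eqref{E:ak} encodes). Some version of this fixed-proportion device, absent from your argument, appears to be unavoidable.
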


Let us  stress  in advance that condition (H) is not only necessary,
but also sufficient for a rearrangement-invariant norm to satisfy
the Lebesgue point property. This is a consequence  of
Proposition~\ref{L:v}, Section~\ref{proofsmain}, and of the
following lemma.

\begin{lemma}\label{L:absolute-continuity-bis}
If     a  rearrangement-invariant
norm $\|\cdot\|_{{X(\Rn)}}$ fulfills condition (H) of Lemma~\ref{L:absolute-continuity},
then $\|\cdot\|_{{X(\Rn)}}$ is locally absolutely continuous.
\end{lemma}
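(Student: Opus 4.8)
The plan is to argue by contraposition: assume that $\|\cdot\|_{X(\Rn)}$ is \emph{not} locally absolutely continuous, and manufacture a function $f$, a sequence of disjoint intervals $\{I_k\}$ and numbers $\{a_k\}$ violating the conclusion of condition~(H), i.e.\ with $\sum_k a_k = \infty$. First I would translate the failure of local absolute continuity into the language of the representation norm. Recall that local absolute continuity fails precisely when there is a bounded set and a function $u \in X_{\loc}(\Rn)$ supported there, together with a non-increasing sequence of sets $K_j$ shrinking to $\emptyset$, such that $\|u\chi_{K_j}\|_{X(\Rn)} \not\to 0$; passing to rearrangements via \eqref{repr2} and \eqref{normaaverage}, this produces a non-increasing $g = u^* \in \overline X_1(0,\infty)$ (after a dilation we may assume support in $(0,1)$) and a constant $\delta > 0$ with
\begin{equation*}
\|g\chi_{(0,t_j)}\|_{\overline X(0,\infty)} \geq \delta \qquad \text{for a sequence } t_j \to 0^+.
\end{equation*}
By homogeneity, rescaling $g$ by $1/\delta$, we may assume $\delta = 1$, so $\|g\chi_{(0,t_j)}\|_{\overline X(0,\infty)} \geq 1$ for $t_j \downarrow 0$.

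Next I would chop the half-line near the origin into the pairwise disjoint intervals $I_k = [t_{k+1}, t_k)$ (passing to a subsequence of the $t_j$ so that these are genuinely nested and exhaust a neighbourhood of $0$), and set $f = g$. The key point is to choose the numbers $a_k$. Since $(f\chi_{I_k})^*$ is, up to a horizontal shift, a left-truncation of $g$, and since $\|g\chi_{(0,t_{k+1})}\|_{\overline X(0,\infty)} \geq 1$ while $g\chi_{(0,t_{k+1})}$ is supported in $(0,t_{k+1})$, I would use the monotonicity axiom (N2) together with the scaling identity \eqref{normaaverage} to find, for each $k$, a value $a_k$ with $a_k \geq \mathcal L^1(I_k) = t_k - t_{k+1}$ and
\begin{equation*}
\|(f\chi_{I_k})^*\nt_{\overline X(0,a_k)} > 1.
\end{equation*}
The natural candidate is $a_k$ comparable to $t_k$: because $(f\chi_{I_k})^*$ agrees with a shifted copy of $g\chi_{I_k}$, and the total mass $\|g\chi_{(0,t_k)}\|$ is at least $1$ but concentrates more and more in the tail intervals as $k$ grows (the head $g\chi_{(0,t_{k+1})}$ also having norm $\geq 1$ forces the dyadic-type pieces not to be summable in norm), one can extract such an $a_k$. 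With $a_k \gtrsim t_k$ and $t_k \downarrow 0$ but $\sum_k (t_k - t_{k+1})$ being merely $t_1$, the subtlety is that $\sum a_k$ need \emph{not} automatically diverge — so the intervals $I_k$ and the threshold indices must be chosen with some care: I would instead pick the $t_j$ so sparse that $a_k$ can be taken $\geq 1$ for infinitely many $k$ (using that each truncation $g\chi_{(0,t_{k+1})}$ already has norm $\geq 1$, hence by the definition of the averaged norm \eqref{normaaverage} one may inflate the averaging window up to size $1$ and still keep the averaged norm $> 1$), whence $\sum_k a_k = \infty$. This contradicts~(H), completing the contrapositive.

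The main obstacle, as flagged above, is the bookkeeping in the choice of $a_k$: condition~(H) only gives a contradiction if $\sum a_k = \infty$, and the naive construction yields $a_k \sim t_k \to 0$ with a convergent sum. The resolution is to exploit the \emph{averaging} built into the $\nt$-norm — by \eqref{normaaverage}, $\|(f\chi_{I_k})^*\nt_{\overline X(0,a_k)} = \|(f\chi_{I_k})^*(a_k \cdot)\|_{\overline X(0,\infty)}$, so enlarging $a_k$ dilates the argument and, crucially, can only help once the underlying norm on $(0,\infty)$ is already $\geq 1$ before normalization; this lets us force $a_k \geq 1$ and thereby divergence. Making this quantitative — i.e.\ showing that the failure of local absolute continuity gives truncations whose \emph{un-normalized} $\overline X(0,\infty)$-norm is bounded below, so that the normalization window can be opened all the way to length $1$ — is the one place where property (N2), the embedding \eqref{imm}, and the precise definitions \eqref{normaaverage'}–\eqref{normaaverage} all have to be combined carefully.
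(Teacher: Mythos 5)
Your overall strategy is the right one and matches the paper's: reduce the failure of local absolute continuity to the existence of a single non-increasing $g\in\overline X_1(0,\infty)$ whose head-truncations satisfy $\|g\chi_{(0,t)}\|_{\overline X(0,\infty)}\geq\delta>0$ for all small $t$, and then contradict (H) by producing disjoint intervals $I_k$ with $a_k=1$ (so that $\sum_k a_k=\infty$ is automatic). However, the step where you verify the norm condition \eqref{E:ak} has a genuine gap. With $a_k\geq 1$ the requirement $\|(f\chi_{I_k})^*\nt_{\overline X(0,a_k)}>1$ is \emph{equivalent to or harder than} the unnormalized bound $\|g\chi_{I_k}\|_{\overline X(0,\infty)}>1$: by \eqref{normaaverage} and (N2) (equivalently, by the monotonicity of $F(r)/r$ in Lemma~\ref{T:lemma_conc}), enlarging the averaging window $a_k$ \emph{decreases} the averaged norm, so "inflating the window can only help" is backwards. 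You therefore need each \emph{disjoint piece} $g\chi_{[t_{k+1},t_k)}$ to have unnormalized norm strictly greater than $1$, but your hypothesis only controls the \emph{nested heads} $g\chi_{(0,t_{k+1})}$; these are different objects, and nothing in your argument transfers the lower bound from the head to the annular piece. Moreover, after normalizing to $\delta=1$ the pieces could a priori all have norm $\leq 1$, since the head norms need only approach $1$.

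The missing ingredient is the Fatou-type axiom (N3) together with a margin in the normalization. The paper normalizes so that $\|g^*\chi_{(0,t)}\|_{\overline X(0,\infty)}\geq 2$ for all $t\in(0,1)$ and then constructs the cut points $b_k$ \emph{inductively}: since $g^*\chi_{(b_k/l,\,b_k)}\nearrow g^*\chi_{(0,b_k)}$ as $l\to\infty$, property (N3) yields $\|g^*\chi_{(b_k/l,\,b_k)}\|_{\overline X(0,\infty)}\nearrow\|g^*\chi_{(0,b_k)}\|_{\overline X(0,\infty)}\geq 2$, so some $l_0$ gives a piece of norm strictly greater than $1$; setting $b_{k+1}=b_k/l_0$ produces pairwise disjoint intervals $I_k=(b_{k+1},b_k)$ each satisfying \eqref{E:ak} with $a_k=1$. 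Without this peeling argument (and the factor $2$ that makes it work), your choice of $I_k$ as consecutive gaps of an arbitrary subsequence $\{t_j\}$ does not guarantee \eqref{E:ak}, and the contradiction with (H) is not established. The final deduction of local absolute continuity from the vanishing of $\|g^*\chi_{(0,t)}\|_{\overline X(0,\infty)}$ (via $(u\chi_{K_j})^*\leq(u\chi_{K_1})^*\chi_{(0,\mathcal L^n(K_j))}$) is unproblematic and implicit in your contrapositive.
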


The proof of Lemma~\ref{L:absolute-continuity} in turn exploits the
following property, which will also be of later use.

\begin{lemma}\label{T:lemma_conc}
Let $\|\cdot\|_{X(\Rn)}$ be a  rearrangement-invariant  norm. Given
any function $f\in \overline{X}(0,\infty)$, the function $F : (0,
\infty) \to [0, \infty)$, defined as
\begin{equation}\label{F} \quad F(r)= r \, \|f^*\nt_{\overline{X}(0,r)} \qquad \hbox{for
$ r \in (0, \infty)$,}
\end{equation}
is non-decreasing on $(0,\infty)$,  and the function
$\frac{F(r)}{r}$ is non-increasing on $(0,\infty)$. In particular,
the function $F$ is continuous on $(0, \infty)$.
\end{lemma}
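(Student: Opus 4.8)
The plan is to reduce everything to the one-dimensional representation norm and the two structural properties (N2) and (N3), together with the boundedness and monotonicity behaviour of the dilation operator $D_\delta$. Recall from \eqref{normaaverage} that for a set $G$ of measure $r$ one has $\|f^*\nt_{\overline X(0,r)} = \|f^*(r\,\cdot)\|_{\overline X(0,\infty)}$, but in the one-dimensional setting \eqref{F} can be read directly: writing $g = f^*$, which is a fixed non-increasing function on $(0,\infty)$, we have $F(r) = r\,\|g\nt_{\overline X(0,r)} = r\,\|(g\chi_{(0,r)})^*(r\,\cdot)\|_{\overline X(0,\infty)} = r\,\|(D_{1/r}(g\chi_{(0,r)}))\|_{\overline X(0,\infty)}$, since $g$ is already non-increasing so $g\chi_{(0,r)}$ is its own decreasing rearrangement. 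So $F(r) = r\,\|h_r\|_{\overline X(0,\infty)}$ where $h_r(s) = g(rs)\chi_{(0,1)}(s)$.

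First I would prove that $F$ is non-decreasing. Fix $0 < r_1 < r_2$ and set $\lambda = r_1/r_2 \in (0,1)$. The claim $F(r_1) \le F(r_2)$ becomes $r_1\|h_{r_1}\|_{\overline X} \le r_2 \|h_{r_2}\|_{\overline X}$, i.e. $\lambda\|h_{r_1}\|_{\overline X} \le \|h_{r_2}\|_{\overline X}$. Now $h_{r_1}(s) = g(r_1 s)\chi_{(0,1)}(s)$ and $h_{r_2}(s) = g(r_2 s)\chi_{(0,1)}(s)$; since $r_1 s \le r_2 s$ and $g$ is non-increasing, $g(r_1 s) \ge g(r_2 s)$, so pointwise $h_{r_1} \ge h_{r_2}$ on $(0,1)$, and hence $\|h_{r_1}\|_{\overline X} \ge \|h_{r_2}\|_{\overline X}$ by (N2). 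This gives the \emph{reverse} inequality, so the naive pointwise comparison is not enough — one must use that shrinking $r$ also concentrates mass. The cleaner route is to use the averaged-norm reformulation \eqref{due misure}: $\|g\nt_{\overline X(0,r)}$ is the norm of $g$ in $\overline X$ over $(0,r)$ with the normalized measure $\tfrac1r\mathcal L^1$. Equivalently, using the fundamental/averaging structure, I expect $F(r) = r\|g\nt_{\overline X(0,r)}$ to coincide with the functional whose monotonicity is exactly the classical fact that for the $L^1$ case $F(r) = \int_0^r g$, which is visibly non-decreasing and concave; the general case follows because $\overline X \hookrightarrow L^1_{\loc}$ and $L^\infty_{\loc} \hookrightarrow \overline X$ interpolate, but more directly: by Fubini/layer-cake and the lattice property, the map $r \mapsto r\|g\nt_{\overline X(0,r)}$ is the value at $(0,r)$ of a monotone set function. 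I would make this rigorous by writing $g\chi_{(0,r_2)} = g\chi_{(0,r_1)} + g\chi_{(r_1,r_2)} \ge g\chi_{(0,r_1)}$ and applying \eqref{repr2} together with (N2) to get $\|g\chi_{(0,r_2)}\|_{\overline X(0,\infty)} \ge \|g\chi_{(0,r_1)}\|_{\overline X(0,\infty)}$, and then relating $F(r) = r\|g\nt_{\overline X(0,r)}$ to $\varphi_{\overline X}(r)\cdot(\text{something})$ — but actually the honest statement is $F(r) = \sup_{\|v\|_{\overline X'(0,r)}\le 1,\ \text{normalized}} \int_0^r g v$, and monotonicity in $r$ of this sup is immediate once one checks that extending a test function $v$ on $(0,r_1)$ by zero to $(0,r_2)$ and rescaling is admissible. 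This duality argument, using \eqref{n.assoc.} and \eqref{H-in}, is the route I would actually write.

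Next, for the property that $r \mapsto F(r)/r = \|g\nt_{\overline X(0,r)}$ is non-increasing: here the pointwise comparison above works in our favour. We have $F(r)/r = \|h_r\|_{\overline X(0,\infty)}$ with $h_r(s) = g(rs)\chi_{(0,1)}(s)$, and for $r_1 < r_2$ we showed $h_{r_1} \ge h_{r_2}$ pointwise on $(0,1)$, so by (N2) $\|h_{r_1}\|_{\overline X} \ge \|h_{r_2}\|_{\overline X}$, i.e. $F(r_1)/r_1 \ge F(r_2)/r_2$, as desired. (One should also note $F(r)/r$ is finite for $r \in (0,\infty)$ because $g = f^* \in \overline X(0,\infty)$ implies $g\chi_{(0,r)} \in \overline X$ and the dilation $D_{1/r}$ is bounded on $\overline X(0,\infty)$ by the cited \cite[Chap.~3, Prop.~5.11]{BS}.) Finally, the continuity of $F$ on $(0,\infty)$ is a free consequence: a function on an open interval that is simultaneously non-decreasing and satisfies $F(r)/r$ non-increasing is automatically continuous. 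Indeed, non-decreasing functions have one-sided limits everywhere, and at any $r_0$ the left limit $F(r_0^-)$ and right limit $F(r_0^+)$ satisfy $F(r_0^-) \le F(r_0) \le F(r_0^+)$; meanwhile $F(r)/r \searrow$ forces $F(r_0^+)/r_0 \le F(r_0)/r_0 \le F(r_0^-)/r_0$, i.e. $F(r_0^+) \le F(r_0) \le F(r_0^-)$, so all three coincide. I would spell out this last elementary squeeze.

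The main obstacle is the monotonicity of $F$ itself: the obvious pointwise estimate points the wrong way, so one genuinely needs the duality description of the averaged norm (or an equivalent Hardy-type argument via \eqref{Hardy} comparing $u^{**}$ profiles) to see that concentrating $g$ onto a shorter interval, under the normalized measure, cannot increase the total (un-normalized) mass $F(r)$. Everything else — the $F(r)/r$ monotonicity, finiteness, and continuity — is then routine.
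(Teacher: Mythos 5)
Your proposal is correct and follows essentially the same route as the paper: the monotonicity of $F$ is obtained via the duality description of the averaged norm through the associate norm \eqref{n.assoc.}, the monotonicity of $F(r)/r$ via the pointwise comparison $f^*(r_1\,\cdot)\chi_{(0,1)} \geq f^*(r_2\,\cdot)\chi_{(0,1)}$ together with (N2), and continuity as an elementary consequence of these two monotonicity properties (which you spell out directly, where the paper cites \cite{KS}). The one step you leave as ``once one checks'' --- that a test function supported in $(0,r_1)$ with $\|v\nt_{\overline X'(0,r_1)}\leq 1$ remains admissible for the $(0,r_2)$ supremum --- is exactly the rescaled pointwise comparison $g^*(t/r_1)\leq g^*(t/r_2)$ that the paper carries out explicitly, so nothing essential is missing.
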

\begin{proof}
Let $0<r_1< r_2$. An application of \eqref{n.assoc.} tells us that
\begin{align*}
F(r_1)
&= \ r_1  \|(f^*\chi_{(0,r_1)})(r_1\, \cdot)\|_{{\oX(0,\infty)}}
=\ r_1 \sup_{\|g\|_{{{\oX}'(0,\infty)}}\leq 1} \ \int_0^1 g^*(s) f^*(r_1s)\,d \mathcal L^1 (s)\\
&=\ \sup_{\|g\|_{{\oX'(0,\infty)}}\leq 1} \ \int_0^{r_1}
g^*\left(\tfrac{t}{r_1}\right) f^*(t)\,d \mathcal L^1 (t) \
\leq \  \sup_{\|g\|_{{\oX'(0,\infty)}}\leq 1}\  \int_0^{r_2} g^*\left(\tfrac{t}{r_1}\right) f^*(t)\,d \mathcal L^1 (t)\\
&\leq \ \sup_{\|g\|_{{\oX'(0,\infty)}}\leq 1} \ \int_0^{r_2}
g^*\left(\tfrac{t}{r_2}\right) f^*(t)\,d \mathcal L^1 (t) \
=\ r_2 \sup_{\|g\|_{{\oX'(0,\infty)}}\leq 1} \ \int_0^1 g^*(s) f^*(r_2s)\,d \mathcal L^1 (s)\\
&=\ r_2 \|(f^*\chi_{(0,r_2)})(r_2 \, \cdot)\|_{{\oX(0,\infty)}}
=F(r_2).
\end{align*}
Namely, $F$  is non-decreasing on $(0,\infty)$. The fact  that the
function  $\frac{F(r)}{r}$ is non-increasing on $(0,\infty)$  is a
consequence of property (N2) and of the inequality
$$f^*(r_1 \, \cdot)\chi _{(0, r_1)}(r_1 \, \cdot) \geq f^*(r_2\, \cdot)\chi _{(0, r_2)}(r_2 \, \cdot)$$
 if $0<r_1< r_2$. Hence, in particular, the function $F$ is
continuous on $(0, \infty)$ (see e.g. \cite[Chapter 2, p. 49]{KS}).
\end{proof}

\begin{proof}[Proof of Lemma~\ref{L:absolute-continuity}]
Assume that $\|\cdot\|_{{X(\Rn)}}$ satisfies  the Lebesgue point
property. Suppose, by contradiction, that condition \textup{(H)}
fails, namely, there  exist a function $f\in \overline
X_1(0,\infty)$, a sequence $\{I_k\}$ of pairwise disjoint
intervals in $(0,1)$ and a sequence $\{a_k\}$ of  positive
numbers, with $a_k \geq \mathcal L^1(I_k)$, fulfilling~\eqref{E:ak}
 and such that $\sum_{k=1}^\infty a_k=\infty$.

We may assume, without loss of generality,  that
\begin{equation}\label{ip0}\displaystyle \lim_{k \to \infty} a_k=0.
\end{equation}
Indeed, if \eqref{ip0} fails, then the sequence $\{a_k\}$ can be replaced with another sequence, enjoying the same
properties, and also \eqref{ip0}. To verify this assertion, note
that, if \eqref{ip0}  does not hold, then
 there  exist   $\varepsilon
>0$ and a subsequence $\{a_{k_j}\}$ of   $\{a_k\}$ such that  $a_{k_j} \geq \varepsilon$ for all $j$.
Consider the sequence $\{b_j\}$, defined as
\begin{equation}\label{ip1}  b_j=\max \big\{\tfrac{\varepsilon}{j}, \ \mathcal L^1(I_{k_j})\big\} \quad \text{for}\ j\in \mathbb N.
\end{equation}
Equation \eqref{ip1} immediately tells us that
  $b_j\geq \mathcal L^1(I_{k_j})$, and
$\sum_{j=1}^\infty  b_j=\infty$. Moreover, Lemma~\ref{T:lemma_conc}
and  the inequality   $b_j \leq a_{k_j}$ for  $j\in \mathbb N$
ensure that  \eqref{E:ak}  holds  with $a_k$ and $I_k$ replaced by
$b_j$ and $I_{k_j}$, respectively.  Finally, $\displaystyle \lim_{j\to \infty} b_j=0$, since
$\sum_{j=1}^\infty \mathcal L^1(I_{k_j}) \leq 1$, and hence
$\displaystyle \lim_{j\to \infty}\mathcal L^1(I_{k_j})=0$.

Moreover, by skipping, if necessary, a finite number of terms in the relevant
sequences, we may also assume that
\begin{equation}\label{E:less_than_1}
\sum_{k=1}^\infty \mathcal L^1(I_{k})<1\,.
\end{equation}
Now, set $a_0=0$, and  $J_k=(\sum_{j=0}^{k-1} a_j, \sum_{j=0}^{k} a_j)$ for each $k \in \N$. We define the function  $g : (0, \infty ) \to [0,
\infty)$ as
$$
g(s)=\sum_{k=1}^\infty (f\chi_{I_k})^*\Big(s-\sum_{j=0}^{k-1}
a_j\Big)\chi_{J_{k}}(s) \qquad
\hbox{for $s\in (0, \infty)$,}
$$
and the function $u: \Rn \to [0, \infty)$ as
$$
u(y)=   \sup_{k\in \N} g(y_1+k-1) \,  \chi_{(0,1)^n}(y) \qquad \hbox{for
$y=(y_1,\dots,y_n) \in \Rn$.}
$$
The function $u$ belongs to $X(\Rn)$. To verify this fact, note that
\begin{equation}\label{E:measure}
\mathcal L^n( \{y\in \Rn: u(y) >t\})\leq \mathcal L^1 (\{s\in (0,\infty): |f \chi_{\cup_{k\in \N} I_k}|(s)>t\})
\end{equation}
for every $t\geq 0$. Indeed, thanks to the equimeasurability of  $g$ and $f\chi_{\cup_{k\in \N} I_k}$,
\begin{align*}
\mathcal L^n & ( \{y\in \Rn: u(y) >t\})
=\mathcal L^1(\{s\in (0,1): \sup_{k\in \N} g(s+k-1)>t\})\\
&=\mathcal L^1(\cup_{k\in \N} \{s\in (0,1): g(s+k-1)>t\}) \leq \sum_{k=1}^\infty \mathcal L^1(\{s\in (0,1): g(s+k-1)>t\})\\
&=\mathcal L^1(\{s\in (0,\infty): g(s)>t\}) =\mathcal L^1(\{s\in
(0,\infty): |f\chi_{\cup_{k\in \N} I_k}|(s)>t\}).
\end{align*}
From \eqref{E:measure} it follows that
$$
\|u\|_{X(\Rn)}\leq \|f\chi_{\cup_{k\in \N} I_k}\|_{\oX(0,\infty)}\leq \|f\|_{\oX(0,\infty)}<\infty\,,
$$
whence $u \in X(\Rn)$.
Next, one has  that
\begin{equation}\label{E:not_lebesgue_point}\displaystyle
\limsup_{r\to 0^+} \|u\nt_{X(B_r(x))}>0 \qquad \hbox{for  a.e. $x\in
(0,1)^n$.}
\end{equation}
To prove \eqref{E:not_lebesgue_point}, set
$$\Lambda_k=\{l\in \N: J_l\subseteq [k-1,k]\} \qquad \hbox{for}\  k\in \N.$$ Since
$\sum_{l=0}^\infty a_l=\infty$ and $\displaystyle \lim_{l\to \infty}
|J_l|=\lim_{l\to \infty} a_l=0$, we have that
\begin{equation}\label{sets}\displaystyle
\lim_{k\to \infty} \bigcup_{l\in \Lambda_k} (\overline{J_l}-k+1) = (0,1),
\end{equation}
where $\overline{J_l}$ denotes the closure of the open interval
$J_l$.
 Equation \eqref{sets} has to be interpreted in the following
set-theoretic sense: fixed any $x \in (0,1)^n$, there exist $k_0$
and an~increasing sequence $\{l_k\}_{k=k_0}^\infty$ in $\N$ such
that $l_k\in \Lambda_k$ and $x_1\in (\overline{J_{l_k}}-k+1)$ for
all $k\in \N$ greater than $k_0$. Such a
 $k_0$ can be chosen so that
$B_{\sqrt{n}a_{l_k}}(x)\subseteq (0,1)^n$ for all $k\geq k_0$, since $\displaystyle \lim_{k\to \infty} a_{l_k}=0$.

On the other hand, for every $k\geq k_0$, one also has
\begin{align*}
B_{\sqrt{n}a_{l_k}}(x)
& \ \supseteq \ \prod_{i=1}^n \, [x_i-a_{l_k}, x_i+a_{l_k}]
\ \supseteq \ (J_{l_k}-k+1) \, \times \, \prod_{i=2}^n \, [x_i-a_{l_k}, x_i+a_{l_k}].
\end{align*}
Consequently, for every $y \in \Rn$,
\begin{align*}
u\chi_{B_{\sqrt{n}a_{l_k}}(x)}(y)\
& \geq \ g(y_1+k-1) \ \chi_{(J_{l_k}-k+1)}(y_1) \,  \prod_{i=2}^n\,  \chi_{[x_i-a_{l_k}, x_i +a_{l_k}]} (y_i) \\
&= \ (f\chi_{I_{l_k}})^*\Big(y_1+k-1-\sum_{j=0}^{k-1}
a_j\Big) \ \chi_{(J_{l_k}-k+1)}(y_1) \prod_{i=2}^n \chi_{[x_i-a_{l_k}, x_i
+a_{l_k}]} (y_i)  .
\end{align*}
Hence,
\begin{equation}\label{E:rearrangement-estimate}
(u\chi_{B_{\sqrt{n}a_{l_k}}(x)})^*(s) \ \geq\
(f\chi_{I_{l_k}})^*\left( { (2 a_{l_k})^{1-n} s}\right)  \qquad
\hbox{for $s\in (0, \infty)$.}
\end{equation}
Therefore, thanks to the boundedness on rearrangement-invariant
spaces of the dilation operator, defined as in  \eqref{dilationop},
one gets
\begin{align*}
\|u\nt_{X(B_{\sqrt{n}a_{l_k}}(x))}
&=\|(u\chi_{B_{\sqrt{n}a_{l_k}}})^*(\mathcal L^n(B_{\sqrt{n}a_{l_k}}(x))\, \cdot)\|_{\oX(0,\infty)}\\
& \geq C  \, \|(f\chi_{I_{l_k}})^*(a_{l_k}\,  \cdot)\|_{\oX(0,\infty)}
=\|(f\chi_{I_{l_k}})^*\nt_{\oX(0,a_{l_k})} >1
\end{align*}
 for some positive constant $C=C(n)$.
Hence inequality~\eqref{E:not_lebesgue_point} follows, since $\displaystyle \lim_{k\to \infty} a_{l_k}=0$.

To conclude, consider the set
$M=\{y\in (0,1)^n: u(y)=0\}$. This set $M$ has positive measure. Indeed,   \eqref{E:measure} with $t=0$ and~\eqref{E:less_than_1} imply
\begin{align*}
\mathcal L^n(M)
=1-\mathcal L^n(\{y\in (0,1)^n: u(y)>0\})
& \geq 1-\mathcal L^1 (\{s\in (0,\infty): |f \chi_{\cup_{k\in \N}
I_k}|(s)>0\})\\ &\ \geq 1-\sum_{k=1}^\infty \mathcal L^1(I_{k})>0.
\end{align*}
Then, estimate \eqref{E:not_lebesgue_point} tells us that
$$
\limsup_{r\to 0^+} \|u-u(x)\nt_{X(B_r(x))}=\limsup_{r\to 0^+}
\|u\nt_{X(B_r(x))}>0\qquad \hbox{for a.e. $x\in M$.}
$$
This contradicts the Lebesgue point property for $\|\cdot\|_{X(\Rn)}$.
\end{proof}

\begin{proof}[Proof of Lemma~\ref{L:absolute-continuity-bis}]  Let
$\|\cdot\|_{{X(\Rn)}}$ be a rearrangement-invariant norm satisfying
condition (H). We first prove that, if (H) is in force, then
\begin{equation}\label{E:sufficient}
\lim_{t\to 0^+}
\|g^*\chi_{(0,t)}\|_{\overline{X}(0,\infty)}=0
\end{equation}
for every $g\in
\overline{X}_1(0,\infty)$.

\noindent Arguing
by contradiction,   assume the existence of some
$g\in \overline{X}_1(0,\infty)$ for
which~\eqref{E:sufficient} fails. From property (N2) of
rearrangement-invariant norms, this means  that some $\varepsilon>0$ exists such
that $\|g^*\chi_{(0,t)}\|_{\overline{X}(0,\infty)} \geq \varepsilon$ for every $t\in
(0,1)$. Thanks to (N1), we may assume, without loss of generality,  that
$\varepsilon=2$.
\\
Then, by  induction,    construct  a decreasing sequence $\{b_k\}$, with  $0 < b_k \leq 1$, such that
\begin{equation}\label{induction}\|g^*\chi_{(b_{k+1},b_k)}\|_{\oX(0,\infty)}> 1
 \end{equation}
for every $k\in \N$. To this purpose, set $b_1=1$, and assume that
$b_k$ is given for some $k\in \mathbb N$. Then  define
$$
h_l =g^*\chi_{\big(\frac{b_k}{l},b_k\big)} \qquad \hbox{for $l \in \N$, with
$l \geq 2$}.
$$
Since $0\leq h_l \nearrow g^*\chi_{(0,b_k)}$,   property  (N3)
tells us that $\|h_{l}\|_{\oX(0,\infty)} \nearrow
\|g^*\chi_{(0,b_k)}\|_{\oX(0,\infty)}$. Inasmuch as
$\|g^*\chi_{(0,b_k)}\|_{\oX(0,\infty)} \geq 2$,  then there exists an $l_0$,
with $l_0 \geq 2$, such that $\|h_{l_0}\|_{\oX(0,\infty)} > 1$. Defining
$b_{k+1}=\frac{b_k}{l_0}$ entails that $0<b_{k+1}<b_k$ and
$\|g^*\chi_{(b_{k+1},b_k)}\|_{\oX(0,\infty)}=\|h_{l_0}\|_{\oX(0,\infty)}> 1$,
as desired.

\noindent Observe that  choosing
$f=g^*\chi_{(0,1)}$, and $a_k=1$,
$I_k=(b_{k+1},b_k)$ for each $k\in \mathbb N$ provides a contradiction to assumption
(H).  Indeed,   inequality \eqref{E:ak},  which agrees with
\eqref{induction} in this case, holds for every $k$, whereas
$\sum_{k=1}^\infty a_k=\infty$. Consequently,~\eqref{E:sufficient} does hold.

Now, take any
$u\in X_{\loc}(\Rn)$ and any non-increasing sequence $\{K_j\}$
of measurable  bounded sets in $\Rn$ such that $\cap_{j\in \N}
K_j=\emptyset$. Clearly,   $u\chi_{K_1} \in X(\Rn)$ and $\displaystyle \lim_{j \to
\infty}\mathcal L^n(K_j)=0$. We may
 assume that $\mathcal L^n (K_1) < 1$, whence $(u\chi_{K_j})^*
\in \overline{X}_1(0,\infty)$ for each $j \in \N$.

From ~\eqref{E:sufficient}  it follows that
\begin{align*}\displaystyle
\lim_{j\to \infty} \|u\chi_{K_j}\|_{X(\Rn)}
&=\lim_{j\to \infty} \|(u\chi_{K_j})^*\|_{\overline{X}(0,\infty)} \leq \lim_{j\to \infty} \|(u\chi_{K_1})^*\chi_{(0,\mathcal
L^n(K_j))}\|_{\overline{X}(0,\infty)}
=0,
\end{align*}
namely
the local absolute continuity of
$\|\cdot\|_{{X(\Rn)}}$.
  % This contradiction shows that
%$\|\cdot\|_{{X(\Rn)}}$ is locally absolutely  continuous.
\end{proof}

\begin{proof}[Proof of Proposition~\ref{P:necessary}] Owing to
 \cite[Corollary 5.6, Chap. 1]{BS}, assertions (i) and (ii) are equivalent. Assertion
(i) follows from Lemmas~\ref{L:absolute-continuity} and~\ref{L:absolute-continuity-bis}.
\end{proof}

\section{The functional $\mathcal G _X$ and the operator $\mathcal M _X$}\label{GM}

This section is devoted to a closer analysis of the functional $\mathcal G _X$ and the operator $\mathcal M _X$ associated with a rearrangement-invariant norm $\|\cdot\|_{{X(\Rn)}}$.
\par We begin with alternate characterizations of the   almost concavity of the
functional $\mathcal G _X$. In what follows, we shall make use of the   fact that
\begin{equation} \label{easy G_X}
 \|h\|_{\overline X(0,\infty)} =  \|h^*\|_{{\overline X(0,\infty)}}= \|(h_*)_*\|_{{\overline X(0,\infty)}} = \mathcal G_{X} (h_*)
\end{equation}
  for every $h \in L^0(0,\infty)$.
\\
Moreover,  by a
\emph{partition} of the interval $(0,1)$ we shall mean a finite collection $\{I_k: k=1, \dots, m\}$, where $I_k=(\tau_{k-1},\tau_k)$ with  $0=\tau_0<\tau_1 <\dots<\tau_m=1$.

\begin{proposition}\label{P:2}
Let $\|\cdot\|_{{X(\Rn)}}$ be a
rearrangement-invariant norm. Then the following conditions are
equivalent:
\begin{enumerate}
\item[\textup{(i)}] the functional $\mathcal G_X$ is  almost concave;
\item[\textup{(ii)}]
  a positive constant $C$ exists  such that
\begin{equation}\label{E:concavity}
\sum_{k=1}^m \mathcal L^1 (I_k) \,\|f \nt_{{\overline X(I_k)}} \leq C\|f\|_{{\overline X(0,\infty)}}
\end{equation} for every $f\in \overline X_1(0,\infty)$, and for every partition $\{I_k: k=1, \dots, m\}$ of $(0,1)$;
\item[\textup{(iii)}]   a positive constant $C$ exists
such that
\begin{equation}\label{E:concavity2}
\sum_{k=1}^m \, \mathcal L^n (B_k) \, \|u\nt_{X(B_k)} \leq C   \ \mathcal L^n \big(\cup_{k=1}^m B_k\big) \ \|u\nt_{X(\cup_{k=1}^m B_k)}
\end{equation}
for every $u\in X_{\loc}(\Rn)$, and  for every finite collection $\{B_k: k=1, \dots, m\}$
of pairwise disjoint balls in $\Rn$.
\end{enumerate}
\end{proposition}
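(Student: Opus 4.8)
The plan is to prove the cyclic chain of implications (i) $\Rightarrow$ (ii) $\Rightarrow$ (iii) $\Rightarrow$ (i), translating back and forth between the functional $\mathcal G_X$ acting on inverses of non-increasing functions and the averaged norms $\|\cdot\nt_{\overline X(I_k)}$ and $\|\cdot\nt_{X(B_k)}$. The key bookkeeping identity throughout is the relation \eqref{normaaverage} together with \eqref{easy G_X}, which let us rewrite an averaged norm $\|f\nt_{\overline X(I)}$ of a function supported on an interval $I$ as $\mathcal G_X$ evaluated at a suitable rescaling of the distribution function of $f\chi_I$; the point is that, after normalizing, such distribution functions range over the cone $\mathcal C$ of non-increasing functions from $[0,\infty)$ into $[0,1]$.

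\emph{(i) $\Rightarrow$ (ii).} Fix $f\in\overline X_1(0,\infty)$ and a partition $\{I_k\}_{k=1}^m$ of $(0,1)$. For each $k$ set $\lambda_k=\mathcal L^1(I_k)$, so $\sum_k\lambda_k=1$. The idea is to produce functions $g_k\in\mathcal C$ with $g_k^{-1}$ essentially equal to $(f\chi_{I_k})^*$ rescaled to have support-length $1$, so that $\lambda_k\mathcal G_X(g_k)$ reproduces $\mathcal L^1(I_k)\|f\nt_{\overline X(I_k)}$ up to the normalization constant $\|1\nt_{X(G)}$ from \eqref{norm1}; then $\sum_k\lambda_k g_k\in\mathcal C$ as well, and one checks — using \eqref{easy G_X} and property (N2) — that $\mathcal G_X(\sum_k\lambda_k g_k)$ is controlled by $\|f\|_{\overline X(0,\infty)}$, because the function whose distribution function is $\sum_k\lambda_k g_k$ is dominated (after rearrangement) by a fixed dilate of $f$. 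Applying the almost-concavity inequality \eqref{nearly} to $f_1,\dots,f_m:=g_1,\dots,g_m$ then yields \eqref{E:concavity}. A normalization subtlety here is that $f$ need not take values in $[0,1]$; one disposes of this by first splitting off the constant, or by homogeneity of the norm, which is harmless since both sides of \eqref{E:concavity} are positively homogeneous in $f$.

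\emph{(ii) $\Rightarrow$ (iii).} Given pairwise disjoint balls $B_1,\dots,B_m$ and $u\in X_{\loc}(\Rn)$, write $B=\cup_k B_k$, and relate each $\|u\nt_{X(B_k)}$ to an averaged norm on an interval via \eqref{normaaverage}: specifically, $\mathcal L^n(B_k)\|u\nt_{X(B_k)}$ and $\mathcal L^n(B)\|u\nt_{X(B)}$ should be expressed through $\|\cdot\nt_{\overline X(\cdot)}$ applied to the decreasing rearrangement of $(u\chi_B)$ restricted to pieces of total length $\mathcal L^n(B)$. After rescaling the whole configuration by $\mathcal L^n(B)$ so that it lives in $(0,1)$, the intervals carved out by the level sets of $u$ on the various $B_k$ form (up to a null set) a partition of $(0,1)$, and $f:=(u\chi_B)^*(\mathcal L^n(B)\,\cdot)$ lies in $\overline X_1(0,\infty)$; then \eqref{E:concavity} gives \eqref{E:concavity2}. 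The Hardy--Littlewood inequality \eqref{HL.0} and Hardy's Lemma \eqref{Hardy} are the tools that make the passage from the geometric arrangement of balls to a one-dimensional interval partition rigorous, since the rearrangement of a function on a disjoint union is controlled by the concatenation of the rearrangements of its restrictions.

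\emph{(iii) $\Rightarrow$ (i).} Conversely, given $f_1,\dots,f_k\in\mathcal C$ and weights $\lambda_i\in(0,1)$ with $\sum_i\lambda_i=1$, one builds, for a small scale $\varepsilon>0$, a disjoint family of balls $B_i$ with $\mathcal L^n(B_i)=\lambda_i\varepsilon$ packed inside a single ball $B$ of measure $\approx\varepsilon$, and a function $u\in X_{\loc}(\Rn)$ supported on $B$ whose restriction to $B_i$ has decreasing rearrangement matching the inverse of $f_i$ rescaled to length $\lambda_i\varepsilon$; then $\mathcal L^n(B_i)\|u\nt_{X(B_i)}$ reproduces $\lambda_i\mathcal G_X(f_i)$ and $\mathcal L^n(B)\|u\nt_{X(B)}$ reproduces $\mathcal G_X(\sum_i\lambda_i f_i)$, so \eqref{E:concavity2} becomes exactly \eqref{nearly}. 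Here a geometric obstacle must be handled: arbitrarily many disjoint balls of prescribed volume ratios cannot in general be packed to fill a single ball, so one instead takes $B$ to be the smallest ball containing the $B_i$ and absorbs the resulting bounded volume distortion into the constant $c$, using the doubling property of balls and \eqref{norm1}. \textbf{The main obstacle} is precisely this dictionary between one-dimensional interval partitions and disjoint collections of balls in $\Rn$: one must ensure that the combinatorial/geometric rescalings do not introduce scale-dependent constants, and this is where the invariance statements \eqref{norm1} and the boundedness of the dilation operator \eqref{dilationop} are indispensable.
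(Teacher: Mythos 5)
Your overall dictionary is the same as the paper's: rewrite averaged norms over intervals and balls as $\G_X$ evaluated at normalized distribution functions via \eqref{easy G_X} and \eqref{normaaverage}, and note that $(f\chi_{I_k})_*/\mathcal L^1(I_k)$ automatically lies in $\mathcal C$ (so your worry in (i)$\Rightarrow$(ii) about $f$ not taking values in $[0,1]$ is misplaced: the membership constraint concerns these normalized distribution functions, not $f$ itself, and no appeal to \eqref{norm1} is needed). The paper proves (i)$\Leftrightarrow$(ii) and (ii)$\Leftrightarrow$(iii) rather than your cycle, but that is a harmless reorganization. Two of your steps, however, would fail or go astray as literally described.

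First, in (ii)$\Rightarrow$(iii) you propose to take $f=(u\chi_B)^*(\mathcal L^n(B)\,\cdot)$, the \emph{global} decreasing rearrangement, and partition $(0,1)$ by ``intervals carved out by the level sets.'' This does not work: the global rearrangement interleaves the values coming from the different balls, so for consecutive intervals $I_k$ with $\mathcal L^1(I_k)=\mathcal L^n(B_k)/\mathcal L^n(B)$ one does \emph{not} have $\|f\nt_{\oX(I_k)}\geq\|u\nt_{X(B_k)}$ for every $k$ (the first interval swallows the large values that belong to several balls, and the later intervals are left with too little). Neither \eqref{HL.0} nor Hardy's Lemma \eqref{Hardy} repairs this, since the needed inequalities go in both directions across different $k$. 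The correct one-dimensional model, as in the paper, is the \emph{concatenation} $g=\sum_k (u\chi_{B_k})^*\big(\mathcal L^n(B)\,\cdot-\sum_{j<k}\mathcal L^n(B_j)\big)\chi_{I_k}$: its restriction to $I_k$ is exactly the rescaled rearrangement of $u\chi_{B_k}$, and $g$ is equimeasurable with $(u\chi_B)^*(\mathcal L^n(B)\,\cdot)$, so $\|g\|_{\oX(0,\infty)}=\|u\nt_{X(B)}$ by (N6) alone; applying \eqref{E:concavity} to $g$ gives \eqref{E:concavity2} with equality in every intermediate identification. Second, in (iii)$\Rightarrow$(i) the ``main obstacle'' you isolate is illusory: the right-hand side of \eqref{E:concavity2} involves $\|u\nt_{X(\cup_{k}B_k)}$, the averaged norm over the \emph{union} of the disjoint balls, not over a containing ball, so no packing is required and no volume distortion arises. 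Taking $\mathcal L^n(B_i)=\lambda_i$ and $(u\chi_{B_i})_*=\lambda_i f_i$ gives $\|u\nt_{X(B_i)}=\G_X(f_i)$ and $\|u\nt_{X(\cup_i B_i)}=\G_X(\sum_i\lambda_i f_i)$ directly (handling separately the trivial case $\G_X(\sum_i\lambda_i f_i)=\infty$), and \eqref{E:concavity2} becomes \eqref{nearly}. Once these two points are corrected, your cycle closes and is essentially the paper's argument in a different order.
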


\begin{proof}
\textup{(i)} $\Rightarrow$ \textup{(ii)} Assume that $\mathcal G_X$
is   almost  concave. Fix any function $f\in \oX_1(0,\infty)$,
and any partition $\{I_k: k=1, \dots, m\}$  of $(0,1)$. It is easily
verified that
$$\Big((f\chi_{I_k})^*(\mathcal
L^1(I_k)\cdot)\Big)_* = \frac{(f\chi_{I_k})_*}{\mathcal L^1(I_k)}
\quad \hbox{and} \quad \big(f\chi_{\cup_{k=1}^m I_k}\big)_* =
\sum_{k=1}^m (f\chi_{I_k})_*\,.$$
Hence, by~\eqref{easy G_X} and  the   almost  concavity of $\mathcal
G_{{X}}$, there  exists     a constant $C$ such that
\begin{align*}
\sum_{k=1}^m  \mathcal L^1 (I_k)\, \|f\nt_{{\oX(I_k)}}
&=\sum_{k=1}^m \mathcal L^1 (I_k) \,\|(f\chi_{I_k})^*( \mathcal L^1 (I_k)\cdot)\|_{{\oX(0,\infty)}}
=\sum_{k=1}^m \mathcal L^1 (I_k)\, \mathcal G_{{X}}\Big( \frac{(f\chi_{I_k})_*}{ \mathcal L^1 (I_k)}\Big)\\
&\leq C\, \mathcal G_{X}\Big(\sum_{k=1}^m (f\chi_{I_k})_*\Big)
=C\,\mathcal G_{X}\Big( (f\chi_{\cup_{k=1}^m I_k})_*\Big) \leq
C\,\mathcal G_{X} (f_*) =C\, \|f\|_{\oX(0,\infty)}.
\end{align*}
This yields inequality   \eqref{E:concavity}.
\\
\textup{(ii)} $\Rightarrow$ \textup{(i)} Take any finite collections $\{g_k: k=1, \dots, m\}$  in $\mathcal C$        and
$\{\lambda_k: k=1, \dots, m\}$ in $(0,1)$, respectively, with $\sum_{k=1}\sp m \lambda_k=1$. For each $k=1,\dots,
m$, write $f_k=(g_k)_*$, $a_k= \sum_{i=1}\sp k \lambda_i$, and  $I_k=(a_{k-1},a_k)$ with
$a_0=0$.  Then define
\begin{equation}\nonumber
f(t) =
\sum_{k=1}\sp m
f_k(\tfrac{t-a_{k-1}}{\lambda_k}) \chi_{I_k}(t) \qquad \text{for}\
 t\in (0, \infty).
\end{equation}
Observe that $f_*=\sum_{k=1}\sp m \lambda_k(f_k)_*=\sum_{k=1}\sp m
\lambda_k g_k$. Owing to \eqref{easy G_X} and \eqref{E:concavity},
one thus obtains
\begin{align*}
\mathcal G_X\Big(\sum_{k=1}\sp m\lambda_kg_k\Big) &= \mathcal G_X
(f_*) =\|f\|_{\overline X(0,\infty)} \geq \frac1{C}\sum_{k=1}\sp m
\lambda_k\|f\nt_{\overline X(I_k)}=
\frac1{C}\sum_{k=1}\sp m\lambda_k\|f_k\sp*(\tfrac 1{\lambda_k} \, \cdot)\nt_{\overline X(0,\lambda_k)}\\
&= \frac1{C}\sum_{k=1}\sp m\lambda_k\|f_k\sp*\|_{\oX(0,\infty)}=
\frac1{C}\sum_{k=1}\sp m\lambda_k \G_X\big((f_k)_*\big)=
\frac1{C}\sum_{k=1}\sp m\lambda_k \G_X(g_k),
\end{align*}
whence the  almost concavity of  $\mathcal G_X$ follows.
\\
\textup{(ii)} $\Rightarrow$ \textup{(iii)} Fix any function $u\in
X_{\loc}(\Rn)$, and any finite collection $\{B_k: k=1, \dots, m\}$
of pairwise disjoint balls in $\Rn$. Set  $a_k= \mathcal L^n (B_k)$, for
$k=1,\dots,m$, and $a_0=0$. Define
$$
I_k=\left (\frac{\sum_{j=0}^{k-1} a_j}{\sum_{j=1}^{m} a_j},\frac{\sum_{j=0}^{k} a_j}{\sum_{j=1}^{m} a_j}\right) \qquad \hbox{for $k=1,\dots,m.$}
$$
Thanks to rearrangement-invariance of $\oX(0,\infty)$,  assumption
\textup{(ii)} ensures  that
\begin{align*}
\|u\nt_{X(\cup_{k=1}^m B_k)}
&=\Big\|\big(u\chi_{\cup_{k=1}^m B_k}\big)^*\big(\sum_{k=1}^m a_k \ \cdot\big)\Big\|_{\oX(0,\infty)}
=\Big\|\sum_{k=1}^m (u\chi_{B_k})^*\big(\sum_{j=1}^m a_j \, \cdot\,  - \sum_{j=0}^{k-1} a_j\big) \ \chi_{I_k}\Big\|_{\oX(0,\infty)}\\
&\geq \, \frac{1}{C}\  \sum_{k=1}^m \frac{a_k}{\sum_{j=1}^{m} a_j} \Big\|(u\chi_{B_k})^*\big(\sum_{k=1}^m a_k  \, \cdot\,  - \sum_{j=0}^{k-1} a_j\big) \ \chi_{I_k}  \Big\|^{^{\oslash}}_{\overline X(I_k)}\\
&= \, { \frac{1}{C \, {\sum_{j=1}^{m} a_j} }\,
\sum_{k=1}^m  {a_k}} \, \|(u\chi_{B_k})^*(a_k \,
\cdot)\|_{\overline X(0,\infty)} = \frac{1}{C \, \mathcal L^n
(\cup_{k=1}^m B_k)} \sum_{k=1}^m \, \mathcal L^n (B_k)
\|u\nt_{X(B_k)}.
\end{align*}
Inequality \eqref{E:concavity2} is thus established.
\\
\textup{(iii)} $\Rightarrow$ \textup{(ii)} Assume  that $f\in
\oX_1(0,\infty)$, and that $\{I_k: k=1, \dots, m\}$  is a partition of
$(0,1)$. Let $\{B_k: k=1, \dots, m\}$  be a family of pairwise disjoint
balls in $\Rn$ such that $\mathcal L^n(B_k)=\mathcal L^1(I_k)$, and
let $u$ be a measurable function on $\Rn$ vanishing outside of
$\cup_{k=1}^m B_k$ and fulfilling $(u\chi_{B_k})^*=(f\chi_{I_k})^*$,
for $k=1,2,\dots,m$. Assumption \textup{(iii)} then tells us that
$$
\sum_{k=1}^m \mathcal L^1(I_k) \|f\nt_{\oX(I_k)} \leq C \, \mathcal
L^1(\cup_{k=1}^m I_k) \|f\nt_{\oX(\cup_{k=1}^m I_k)}
=C \, \|f\nt_{\oX(0,1)} = C\,  \|f\|_{\oX(0,\infty)},
$$
namely \eqref{E:concavity}.
\end{proof}

We next focus on the maximal operator $\mathcal M_X$.
Criteria for the validity of the Riesz-Wiener type
inequality~\eqref{E:riesz} are the content of the following result.

\begin{proposition}\label{P:herz}
Let  $\|\cdot\|_{{X(\Rn)}}$ be    a
rearrangement-invariant  norm. Then the following conditions are
equivalent:
\begin{enumerate}
\item[\textup{(i)}]  the Riesz-Wiener type inequality~\eqref{E:riesz} holds for some positive constant $C$, and
for every $u\in X_{\loc}(\Rn)$;
\item[\textup{(ii)}]  a positive constant $C_1$ exists such that
\begin{equation}\label{E:herz}
\min_{k=1,\dots,m} \|u\nt_{{X(B_k)}} \leq C_1\|u\nt_{{X(\cup_{k=1}^m
B_k)}}
\end{equation}   for
  every   $u\in X_{\loc}(\Rn)$, and for  every finite collection $\{B_k: k=1, \dots, m\}$
of pairwise disjoint balls in $\Rn$;
\item[\textup{(iii)}] a positive constant $C_2$ exists such that
\begin{equation*}
\min_{k=1,\dots,m} \|f\nt_{{\oX(I_k)}} \leq
C_2\|f\|_{{\oX(0,\infty)}}
\end{equation*}
for every $f\in \oX_1(0,\infty)$, and for every partition
 $\{I_k: k=1, \dots, m\}$ of $(0,1)$.
\end{enumerate}
\end{proposition}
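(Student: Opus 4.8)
The plan is to establish the cycle $\textup{(i)} \Rightarrow \textup{(ii)} \Rightarrow \textup{(iii)} \Rightarrow \textup{(i)}$, exploiting the formula~\eqref{normaaverage} that rewrites the averaged norm $\|u\nt_{X(B)}$ in terms of the representation norm applied to the dilated rearrangement $(u\chi_B)^*(\mathcal L^n(B)\,\cdot\,)$, together with the connection between $\M_X u$ and the quantities $\|u^*\nt_{\oX(0,s)}$. The equivalence $\textup{(ii)} \Leftrightarrow \textup{(iii)}$ should be essentially a transcription argument, completely parallel to the passages $\textup{(ii)} \Rightarrow \textup{(iii)}$ and $\textup{(iii)} \Rightarrow \textup{(ii)}$ in the proof of Proposition~\ref{P:2}: given a partition of $(0,1)$ one builds pairwise disjoint balls $B_k$ with $\mathcal L^n(B_k)=\mathcal L^1(I_k)$ and a function $u$ supported in $\cup B_k$ with $(u\chi_{B_k})^*=(f\chi_{I_k})^*$; conversely, given disjoint balls one reads off a partition of $(0,1)$ after rescaling by $\mathcal L^n(\cup B_k)$ and uses rearrangement-invariance of $\oX(0,\infty)$. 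Because \eqref{normaaverage} shows $\|u\nt_{X(\cup_k B_k)}$ is the representation norm of the \emph{sum} $\sum_k (u\chi_{B_k})^*(\,\cdot\,)$ suitably dilated into the slots $I_k$, the minimum on the left of~\eqref{E:herz} translates directly into the minimum in~\textup{(iii)}; no concavity is needed here, only monotonicity (N2) and the dilation bound~\eqref{dilationop}.

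The two implications involving $\M_X$ are where the real content lies. For $\textup{(i)} \Rightarrow \textup{(ii)}$: fix $u \in X_{\loc}(\Rn)$ and disjoint balls $B_1,\dots,B_m$, and set $v = u\chi_{\cup_k B_k}$. For each $k$ and each $x \in B_k$ we have $\M_X v(x) \ge \|v\nt_{X(B_k)} = \|u\nt_{X(B_k)}$, so on the set $\cup_k B_k$, of measure $\sum_k \mathcal L^n(B_k)$, the function $\M_X v$ is bounded below by $t_0 := \min_k \|u\nt_{X(B_k)}$. Hence $(\M_X v)^*(s) \ge t_0$ for all $s < \sum_k \mathcal L^n(B_k)$. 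Applying~\eqref{E:riesz} at a value of $s$ slightly below $\sum_k \mathcal L^n(B_k)$ gives $t_0 \le C \|v^*\nt_{\oX(0,s)} \le C \|v^*\nt_{\oX(0,\sum_k \mathcal L^n(B_k))}$ by Lemma~\ref{T:lemma_conc} (the function $F(r)/r$ is non-increasing, so $\|v^*\nt_{\oX(0,r)}$ is non-increasing in $r$); and by~\eqref{normaaverage} the right-hand side equals $C\|u\nt_{X(\cup_k B_k)}$, which is~\eqref{E:herz} with $C_1=C$. The one subtlety is to pass the limit $s \uparrow \sum_k \mathcal L^n(B_k)$ cleanly; continuity of $F$ from Lemma~\ref{T:lemma_conc}, or simply the monotonicity just invoked, handles it.

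The harder direction is $\textup{(iii)} \Rightarrow \textup{(i)}$, i.e.\ deriving the pointwise rearrangement estimate for $\M_X u$ from the finite-partition inequality. The standard route (as in Herz's argument, and in~\cite{Le}) is: fix $s>0$ and suppose $(\M_X u)^*(s) > t$; then the open set $\{\M_X u > t\}$ has measure $> s$, so by a Vitali-type covering argument one extracts finitely many pairwise disjoint balls $B_1,\dots,B_m$ on each of which $\|u\nt_{X(B_k)} > t$ while $\sum_k \mathcal L^n(B_k) \ge c_n s$ for a dimensional constant $c_n$ (the $5r$-covering lemma gives the comparability between the measure of the level set and the total measure of the disjoint sub-balls). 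Then $t < \min_k \|u\nt_{X(B_k)}$, and~\textup{(ii)} — equivalently~\textup{(iii)}, already shown equivalent — yields $t < C_1 \|u\nt_{X(\cup_k B_k)}$. Finally one must bound $\|u\nt_{X(\cup_k B_k)}$ by $C\|u^*\nt_{\oX(0,s)}$: writing $r = \mathcal L^n(\cup_k B_k) = \sum_k \mathcal L^n(B_k) \ge c_n s$, one has $\|u\nt_{X(\cup_k B_k)} = \|(u\chi_{\cup_k B_k})^*(r\,\cdot\,)\|_{\oX(0,\infty)} \le \|u^*(r\,\cdot\,)\|_{\oX(0,\infty)} = \|u^*\nt_{\oX(0,r)}$ by (N2), and then the monotonicity of $r \mapsto \|u^*\nt_{\oX(0,r)}$ from Lemma~\ref{T:lemma_conc} together with $r \ge c_n s$ and the boundedness of the dilation operator~\eqref{dilationop} (to absorb the factor $c_n$) gives $\|u^*\nt_{\oX(0,r)} \le C \|u^*\nt_{\oX(0,s)}$. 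Combining, $(\M_X u)^*(s) \le C\,\|u^*\nt_{\oX(0,s)}$, which is~\eqref{E:riesz}.

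The main obstacle I anticipate is the covering step in $\textup{(iii)} \Rightarrow \textup{(i)}$: one needs that for a function in $X_{\loc}(\Rn)$ the superlevel set $\{\M_X u > t\}$ is open (which follows from the definition of $\M_X$ as a supremum of the continuous-in-$x$ quantities $\|u\nt_{X(B)}$ over balls $B \ni x$, once one checks that continuity — essentially a consequence of local absolute continuity is \emph{not} available here, so one must argue directly that $x \mapsto \|u\nt_{X(B)}$ varies continuously as the ball moves, or instead work with the open set $\{x : \exists\, B\ni x,\ \|u\nt_{X(B)} > t\}$, which is manifestly open), and then that the $5r$-covering lemma can be applied to extract disjoint balls whose measures sum to a fixed fraction of the level set's measure. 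This is the classical machinery behind Riesz-Wiener inequalities, so the argument is routine in outline, but the bookkeeping of the dimensional constants and their absorption via the dilation operator needs care.
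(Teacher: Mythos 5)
Your proposal is correct in outline and reaches all three equivalences, but it is organized differently from the paper's proof at the one point where the real work happens. The paper proves (i) $\Leftrightarrow$ (ii) and (ii) $\Leftrightarrow$ (iii); for the implication (ii) $\Rightarrow$ (i) it does not run a covering argument at all, but instead quotes \cite[Proposition~3.2]{MP} (which rests on Lerner's method from \cite{Le}) to obtain the intermediate estimate $(\M_{X}u)^*(s)\leq C'\|u^*(3^{-n}s\,\cdot)\chi_{(0,1)}\|_{\oX(0,\infty)}$, and then absorbs the dimensional factor by the boundedness of the dilation operator, exactly as you do at the end of your argument. Your version of (iii) $\Rightarrow$ (i) unpacks that citation: the level set $\{\M_{X}u>t\}$ is open (it is a union of the open balls witnessing the supremum, so your worry about continuity in $x$ is unnecessary), inner regularity plus the finite Vitali lemma extract disjoint balls $B_1,\dots,B_m$ with $\|u\nt_{X(B_k)}>t$ and $\sum_k\mathcal L^n(B_k)\geq c_n s$, and then (ii), monotonicity, and dilation finish the job. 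This buys self-containedness at the price of redoing the covering bookkeeping. Your (i) $\Rightarrow$ (ii) and the transcription (ii) $\Leftrightarrow$ (iii) coincide with the paper's.

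Two small corrections. First, in (i) $\Rightarrow$ (ii) you assert $\|v^*\nt_{\oX(0,s)}\leq\|v^*\nt_{\oX(0,\sum_k\mathcal L^n(B_k))}$ for $s$ slightly below $\sum_k\mathcal L^n(B_k)$ and attribute it to the monotonicity in Lemma~\ref{T:lemma_conc}; but that lemma says $r\mapsto\|v^*\nt_{\oX(0,r)}$ is \emph{non-increasing}, so the inequality goes the other way. What saves the step --- and what the paper actually uses --- is the \emph{continuity} of $s\mapsto\|(u\chi_{\cup_k B_k})^*\nt_{\oX(0,s)}$ from that same lemma, letting $s$ increase to $\sum_k\mathcal L^n(B_k)$. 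Second, in (iii) $\Rightarrow$ (i) the identity $\|u^*(r\,\cdot)\|_{\oX(0,\infty)}=\|u^*\nt_{\oX(0,r)}$ is false for general $u$; the correct chain is $\|(u\chi_{\cup_k B_k})^*(r\,\cdot)\|_{\oX(0,\infty)}\leq\|u^*(r\,\cdot)\chi_{(0,1)}\|_{\oX(0,\infty)}=\|u^*\nt_{\oX(0,r)}$, which holds because $(u\chi_{\cup_k B_k})^*$ vanishes beyond $r=\mathcal L^n(\cup_k B_k)$. Neither slip affects the validity of the overall argument.
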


\begin{proof}
\textup{(i)} $\Rightarrow$ \textup{(ii)} Let
 $u\in X_{\loc}(\Rn)$,  and let $\{B_k: k=1, \dots, m\}$ be a    collection
of pairwise disjoint balls in $\Rn$. When $\min_{k=1,\dots,m} \|u\nt_{X(B_k)}=0$, then~\eqref{E:herz} trivially holds. Assume that $\min_{k=1,\dots,m} \|u\nt_{X(B_k)} >0$. Fix any $s \in \big(0,
\mathcal L^n (\cup_{k=1}^m B_k  )\big)$, and  any $t \in
\big(0,\min_{k=1,\dots,m} \|u\nt_{X(B_k)}\big)$. If $x\in B_j$ for
some $j=1,\dots,m$, then
$$
\M_{X}(u\chi_{\cup_{k=1}^m B_k})(x) \geq  \|u\chi_{\cup_{k=1}^m
B_k}\nt_{X(B_j)} \geq \min_{k=1,\dots,m} \|u\nt_{X(B_k)}>t.
$$
Thus,
$$
\mathcal L^n(\{x\in \Rn: \M_{X}(u\chi_{\cup_{k=1}^m B_k})(x) >t\})
\geq  \mathcal L^n(\cup_{k=1}^m B_k)>s,
$$
and, consequently,
$$
\big(\M_{X}(u\chi_{\cup_{k=1}^m B_k})\big)^*(s) \geq  t.
$$
Since the last inequality holds for every $t<\min_{k=1,\dots,m}
\|u\nt_{X(B_k)}$, one infers that
\begin{equation}\label{E:rearrangement}
\big(\M_{X }(u\chi_{\cup_{k=1}^m B_k})\big)^*(s)\ \geq\
\min_{k=1,\dots,m} \|u\nt_{X(B_k)}.
\end{equation}
On the other hand, an application of assumption \textup{(i)} with
$u$ replaced by $u\chi_{\cup_{k=1}^m B_k}$ tells us that
\begin{equation}\label{E:assumption}
\big(\M_{X}(u\chi_{\cup_{k=1}^m B_k})\big)^*(s) \leq C \,
\|(u\chi_{\cup_{k=1}^m B_k})^*\nt_{\oX(0,s)} \qquad \hbox{for $s \in
\big(0, \mathcal L^n (\cup_{k=1}^m B_k )\big)$.}
\end{equation}
Coupling~\eqref{E:rearrangement}
with~\eqref{E:assumption} implies that
\begin{equation*}
\min_{k=1,\dots,m} \|u\nt_{X(B_k)} \leq C \, \|(u\chi_{\cup_{k=1}^m B_k})^*\nt_{\oX(0,s)} \qquad \hbox{for
$ s \in\big(0, \mathcal L^n (\cup_{k=1}^m B_k  )\big)$. }
\end{equation*}
Thus, owing to the continuity of the function
$s\mapsto \|(u\chi_{\cup_{k=1}^m B_k})^*\nt_{\oX(0,s)}$, which is
guaranteed by Lemma~\ref{T:lemma_conc},
we  deduce that
$$
\min_{k=1,\dots,m} \|u\nt_{X(B_k)} \leq C \|(u\chi_{\cup_{k=1}^m
B_k})^*\nt_{\oX(0,\mathcal L^n(\cup_{k=1}^m B_k))} = C
\|u\nt_{{X(\cup_{k=1}^m B_k)}},
$$
namely \eqref{E:herz}.
\\
 \textup{(ii)} $\Rightarrow$ \textup{(i)}
By \cite[Proposition 3.2]{MP}, condition \textup{(ii)} implies the
existence of a constant $C'$ such that
\begin{equation}\label{E:lerner}
(\M_{X }u)^*(s) \leq C'   \|u^*\big( 3^{-n} {s} \,
\cdot\big){\chi _{(0,1)}(\,\cdot \,)
\|_{\oX(0,\infty)} }  \qquad \hbox{for $s \in (0, \infty)$, }
\end{equation}
for every $u\in X_{\loc}(\Rn)$. By the boundedness of the dilation
operator on rearrangement-invariant spaces, there exists a constant
$C''$, independent of $u$, such that
 \begin{align}\label{E:dilation}
 \|u^*\big( 3^{-n} {s} & \, \cdot\big) {\chi _{(0,1)}(\,\cdot \,)}
 \|_{\oX(0,\infty)} \leq C'' \|u^*(s \, \cdot)
{\chi _{(0,1)}(3^n\,\cdot \,)} \|_{\oX(0,\infty)}\\ \nonumber & \leq
C'' \|u^*(s \, \cdot) {\chi _{(0,1)}(\,\cdot \,)} \|_{\oX(0,\infty)}
= C'' \|u^*\nt_{\oX(0,s)} \qquad \qquad \hbox{for $s \in (0,
\infty)$.}\end{align}
 Inequality~\eqref{E:riesz} follows  from
\eqref{E:lerner} and \eqref{E:dilation}.
\\
\textup{(ii)} $\Leftrightarrow$ \textup{(iii)} The proof is
completely analogous  to that of the equivalence  between conditions \textup{(ii)}
and \textup{(iii)} in  Proposition~\ref{P:2}. We omit
the details for brevity.
\end{proof}

Condition (H) introduced  in Lemma~\ref{L:absolute-continuity} can
be characterized in terms of the maximal operator $\M_{X}$ as
follows.

\begin{proposition}\label{P:3}
Let $\|\cdot\|_{{X(\Rn)}}$ be  a rearrangement-invariant   norm.
Then the  following  assertions are equivalent:
\begin{enumerate}
\item[\textup{(i)}]  $\|\cdot\|_{{X(\Rn)}}$ fulfils condition \textup{(H)}  in Lemma~\ref{L:absolute-continuity};
\item[\textup{(ii)}]   For every function $u\in X(\Rn)$, supported  in
 a set of  finite  measure,
$$\mathcal L^n(\{x\in\Rn\colon \M_{X}u(x)>1\}) \ < \infty.$$
\end{enumerate}
\end{proposition}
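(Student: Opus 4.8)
The plan is to prove the two implications separately, both times using the same dictionary between the geometry on $\Rn$ and rearrangements on $(0,1)$: a pairwise disjoint family of ``bad'' balls $B$ (those with $\|u\nt_{X(B)}>1$) corresponds to a pairwise disjoint family of intervals in $(0,1)$ on which a single $f\in\oX_1(0,\infty)$, with $f^*\le u^*$, restricts to translates of the rearrangements $(u\chi_B)^*$.

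For (ii)$\Rightarrow$(i) I would argue by contraposition. Suppose $f\in\oX_1(0,\infty)$, pairwise disjoint intervals $\{I_k\}\subseteq(0,1)$ and positive numbers $a_k\ge\mathcal L^1(I_k)$ violate (H), so that $\|(f\chi_{I_k})^*\nt_{\oX(0,a_k)}>1$ for every $k$ while $\sum_ka_k=\infty$. I would pick pairwise disjoint balls $B_k\subseteq\Rn$ with $\mathcal L^n(B_k)=a_k$ (arranging their centres along a ray), concentric sub-balls $B_k'\subseteq B_k$ with $\mathcal L^n(B_k')=\mathcal L^1(I_k)$, and take $u$ supported in $\bigcup_kB_k'$, radially non-increasing on each $B_k'$ with $(u\chi_{B_k'})^*=(f\chi_{I_k})^*$. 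Disjointness of the supports of the pieces $u\chi_{B_k'}$ gives $u^*=(f\chi_{\bigcup_kI_k})^*\le f^*$, whence $u\in X(\Rn)$; moreover $\operatorname{supp}u$ has measure $\sum_k\mathcal L^1(I_k)\le1$; and, by \eqref{normaaverage} together with $a_k\ge\mathcal L^1(I_k)$, one gets $\M_{X}u(x)\ge\|u\nt_{X(B_k)}=\|(f\chi_{I_k})^*\nt_{\oX(0,a_k)}>1$ for every $x\in B_k$. So $\M_{X}u>1$ on a set of measure $\sum_ka_k=\infty$, contradicting (ii).

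For (i)$\Rightarrow$(ii), let $u\in X(\Rn)$ have support of finite measure. Using $\M_{X}(u(\lambda\,\cdot))(x)=(\M_{X}u)(\lambda x)$ for $\lambda>0$ (immediate from \eqref{normaaverage}, together with boundedness of dilation operators to keep the dilated function in $X(\Rn)$), I would dilate $u$ to reduce to the case $\mathcal L^n(\operatorname{supp}u)<1$, so that $u^*\in\oX_1(0,\infty)$. Arguing by contradiction, assume $\mathcal L^n(\{\M_{X}u>1\})=\infty$. Since (H) forces \eqref{E:sufficient} (applied with $g=u^*$), and since $(u\chi_B)^*(R\,\cdot)\le u^*\chi_{(0,1/R)}$ for any ball $B$ of measure $R>1$, the averaged norm $\|u\nt_{X(B)}$ tends to $0$ as the radius of $B$ grows; hence every bad ball has radius bounded by some $\rho_0$. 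A Vitali covering argument then yields pairwise disjoint bad balls $\{B_k\}$ with $\bigcup_k5B_k\supseteq\{\M_{X}u>1\}$, so $\sum_k\mathcal L^n(B_k)\ge5^{-n}\mathcal L^n(\{\M_{X}u>1\})=\infty$. Finally I would encode $\{B_k\}$ via the dictionary above: put $a_k=\mathcal L^n(B_k)$ and $m_k=\mathcal L^n(\{|u|>0\}\cap B_k)\le a_k$, choose pairwise disjoint intervals $I_k\subseteq(0,1)$ with $\mathcal L^1(I_k)=m_k$ (room exists because $\sum_km_k\le\mathcal L^n(\operatorname{supp}u)<1$), and build $f\in\oX_1(0,\infty)$ supported in $\bigcup_kI_k$ with $f|_{I_k}$ a translate of $(u\chi_{B_k})^*$; as before $f^*=(f\chi_{\bigcup_kI_k})^*\le u^*$, $(f\chi_{I_k})^*=(u\chi_{B_k})^*$, $a_k\ge\mathcal L^1(I_k)$, and $\|(f\chi_{I_k})^*\nt_{\oX(0,a_k)}=\|u\nt_{X(B_k)}>1$. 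Thus $f,\{I_k\},\{a_k\}$ satisfy the hypotheses of (H) while $\sum_ka_k=\infty$, contradicting (H). Hence (ii) holds.

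The hard part will be the covering step in (i)$\Rightarrow$(ii): passing from the (possibly badly shaped) level set $\{\M_{X}u>1\}$ to a pairwise disjoint subfamily of bad balls with comparable total measure requires a covering lemma, which in turn requires an a priori bound on the radii of bad balls. Supplying that bound is exactly where hypothesis (H) is genuinely used — through its consequence \eqref{E:sufficient}, i.e. through the local absolute continuity established in Lemma~\ref{L:absolute-continuity-bis}. Everything else is the same ``concatenation'' bookkeeping already carried out in the proof of Proposition~\ref{P:2}.
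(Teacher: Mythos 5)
Your proposal is correct and follows essentially the same route as the paper's proof: the same ball--interval dictionary via equimeasurable transplantation for (ii)$\Rightarrow$(i), and for (i)$\Rightarrow$(ii) the same use of the consequence \eqref{E:sufficient} of (H) to obtain a uniform bound on the measure of the ``bad'' balls, followed by the same Vitali covering argument and concatenation of the rearrangements $(u\chi_{B_k})^*$ into a single $f\in\overline X_1(0,\infty)$. The only differences are cosmetic: you argue both implications contrapositively and normalize by a preliminary dilation of $u$, whereas the paper argues (i)$\Rightarrow$(ii) directly and instead rescales the intervals by $\alpha=\mathcal L^n(\{u\neq0\})$.
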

\begin{proof}
\textup{(i)} $\Rightarrow$ \textup{(ii)} Let $u \in
X(\Rn)$ be  supported in a set of finite measure. Set  $E=\{x \in\Rn :
\M_{{X}}u(x) > 1 \}.$   According to   \eqref{HL}, for any $y \in E$,
there exists a ball $B_y$ in $\Rn$ such that  $y \in B_y$  and   $
\|u\nt_{{X(B_y)}} >1$. Define   \begin{equation}\label{levelHL}E_1=\Big\{y \in E: \mathcal L^n(B_y)
> \max\{1,\mathcal L^n(\{|u|>0\})\}\Big\}.\end{equation}
We claim that, if $y \in E_1$, then
\begin{equation}\label{april1}
(u\chi_{B_y})^*\big(\mathcal L^n(B_y) s\big) \leq
(u\chi_{B_y})^*(s)\chi_{\big(0,\frac{\mathcal
L^n(\{|u|>0\})}{\mathcal L^n(B_y)}\big)}(s)\qquad \hbox{for $s \in
(0, \infty)$.}
\end{equation}
Indeed, since $\mathcal L^n(B_y) \geq 1$, by the monotonicity of the
decreasing rearrangement
$$(u\chi_{B_y})^*(s) \geq (u\chi_{B_y})^*\big(\mathcal L^n(B_y)
s\big) \qquad \hbox{for $s \in (0, \infty)$.}$$ Thus, inequality
\eqref{april1} certainly holds if $s \in \big(0, \tfrac{\mathcal
L^n(\{|u|>0\})}{\mathcal L^n(B_y)} \big]$. On the other hand,
$\mathcal L^n(B_y) \geq \mathcal L^n(\{|u|>0\})$, and since
$(u\chi_{B_y})^*(s) =0$ for $s \geq \mathcal L^n(\{|u|>0\})$, we
have that  $(u\chi_{B_y})^*\big(\mathcal L^n(B_y) s\big)=0$ if $s
\in \big(\tfrac{\mathcal L^n(\{|u|>0\})}{\mathcal L^n(B_y)},
\infty\big)$. Thereby, inequality \eqref{april1} also holds for
these values of $s$.
%
%
%
%
%If $y \in E_1$, then $(u\chi_{B_y})^*(s) =0$ for $s \geq \mathcal
%L^n(\{|u|>0\})$, since $\mathcal L^n(B_y) \geq \mathcal
%L^n(\{|u|>0\})$. Thus, inasmuch as
% $\mathcal L^n(B_y) \geq 1$,
%$$(u\chi_{B_y})^*(s) \geq (u\chi_{B_y})^*\big(\mathcal L^n(B_y)
%s\big) \qquad \hbox{for $s \in \Big[\tfrac{\mathcal
%L^n(\{|u|>0\})}{\mathcal L^n(B_y)}, \infty\Big)$.}$$   Therefore,
%\begin{equation}\nonumber\label{april1}
%(u\chi_{B_y})^*\big(\mathcal L^n(B_y) s\big) \leq
%(u\chi_{B_y})^*(s)\chi_{\big(0,\frac{\mathcal
%L^n(\{|u|>0\})}{\mathcal L^n(B_y)}\big)}(s)\qquad \hbox{for $s \in
%(0, \infty)$,}
%\end{equation}
%
%
%
%
%
%If $y \in E_1$, \textcolor{red}{then
%$(u\chi_{B_y})^*(s) =0$ for $s \geq \mathcal L^n(\{|u|>0\})$. Thus, inasmuch as
% $\mathcal L^n(B_y) \geq 1$,
%$$(u\chi_{B_y})^*(s) \geq (u\chi_{B_y})^*\big(\mathcal L^n(B_y)
%s\big) \qquad \hbox{for $s \in (0, \infty)$.}$$}   Therefore,
%\begin{equation}\nonumber\label{april1}
%(u\chi_{B_y})^*\big(\mathcal L^n(B_y) s\big) \leq
%(u\chi_{B_y})^*(s)\chi_{\big(0,\frac{\mathcal
%L^n(\{|u|>0\})}{\mathcal L^n(B_y)}\big)}(s)\qquad \hbox{for $s \in (0, \infty)$,}
%\end{equation}
%From \eqref{april1}
 Owing to \eqref{april1},
%one deduces that
\begin{align}\label{E:estimate-measure}
1 & <\|u\nt_{X(B_y)} =\|(u\chi_{B_y})^*(\mathcal L^n(B_y) \
\cdot)\|_{\oX(0,\infty)} \\
\nonumber &  \leq \Big\|(u\chi_{B_y})^*\chi_{\big(0,\frac{\mathcal
L^n(\{|u|>0\})}{\mathcal L^n(B_y)}\big)}\Big\|_{\oX(0,\infty)}\leq
\Big\|u^*\chi_{\big(0,\frac{\mathcal L^n(\{|u|>0\})}{\mathcal
L^n(B_y)}\big)}\Big\|_{\oX(0,\infty)}.
\end{align}
 Since  (i) is in force, equation \eqref{E:sufficient}    holds with
$g=u^*\chi_{(0,1)} \in \oX_1(0,\infty)$, namely,
$$
\lim_{t\to 0^+} \|u^*\chi_{(0,t)}\|_{\oX(0,\infty)}=0.
$$
 This implies the existence of some  $t_0\in (0,1)$ such that
$\|u^*\chi_{(0,t)}\|_{\oX(0,\infty)} <1$
 for every $t\in (0,t_0)$. Thus,  \eqref{E:estimate-measure} entails that
$$
 \frac{\mathcal L^n(\{|u|>0\})}{\mathcal L^n(B_y)}  \geq t_0
$$
for every $y \in E_1$. Hence,   by \eqref{levelHL},
\begin{equation}\label{palle}\sup_{y\in E} \, {\mathcal L^n(B_y)}\leq \max \Big\{1,\frac{\mathcal L^n(\{|u|>0\})}{t_0}\Big\}.\end{equation}
An application of   Vitali's covering lemma, in the form of
\cite[Lemma 1.6, Chap. 1]{SteinBook}, ensures that there exists  a
countable set $\mathcal I\subseteq E$ such that the family
$\{B_y: y \in \mathcal I\}$ consists of pairwise disjoint balls,
such that $E\subseteq \cup_{y\in \mathcal  I} 5 B_y$. Here, $5 B_y$ denotes
the ball, with the same center as $B_y$, whose radius is $5$ times
the radius of $B_y$.  If $\mathcal I$ is finite, then
trivially $\mathcal L ^n (E) \leq 5^n\sum_{y\in \mathcal I} \mathcal L^n(B_y)
< \infty$. Assume that, instead, $\mathcal I$ is infinite, and  let
$\{y_k\}$ be the sequence of its elements.
For each $k\in \N$,  set, for simplicity, $B_k=B_{y_k}$, and
\begin{equation}
\alpha_k=\mathcal L^n(\{y\in B_k: u(y)\neq 0\}),
\qquad
 I_k=\left(\frac{\sum_{i=0}^{k-1} \alpha_i}{\alpha}, \frac{\sum_{i=1}^{k} \alpha_i}{\alpha}\right),
\qquad a_k=\frac{\mathcal L\sp n(B_k)}{\alpha},
 \end{equation}
 where $\alpha=\mathcal L^n(\{y\in \Rn: u(y)\neq 0\})$ and $\alpha_0=0$.
Note that  $\{I_k\}$ is a sequence of pairwise
disjoint intervals  in $(0,1)$, and
$a_k\geq \mathcal L^1(I_k)$ for each
$k\in \N$. The function $f : (0, \infty ) \to [0, \infty)$, defined by
$$
f(s)=\sum_{k=1}^\infty \big(u\chi_{\{x\in B_k: u(x)\neq
0\}}\big)^*\Big(\alpha s-\sum_{i=1}^{k-1} \alpha_i\Big)  \chi_{I_k}(s)  \quad
\hbox{for $s\in (0,\infty)$,}
$$
   belongs  to $\oX_1(0,\infty)$, and
\begin{align*}
\|(f\chi_{I_k})^*\nt_{\oX(0,a_k)}
&=\|(u\chi_{\{x\in B_k: u(x)\neq 0\}})^*(\alpha \, \cdot)\nt_{\oX(0,a_k)}\\
&=\|(u\chi_{B_k})^*(\mathcal L^n(B_k) \, \cdot)\|_{\oX(0,\infty)}
=\|u\nt_{X(B_k)} >1.
\end{align*}
By {(i)}, one thus obtains that  $\sum_{k=1}^\infty \mathcal
L^n(B_k)=\alpha\sum_{k=1}^\infty a_k <\infty$. Hence $\mathcal L^n(E)
\leq 5^n \sum_{k=1}^\infty \mathcal L^n(B_k) < \infty$,   also in this
case.
\\
 \textup{(ii)} $\Rightarrow$ \textup{(i)} Let $f\in \oX _1(0,\infty)$,  let $\{I_k\}$
 be any sequence of pairwise disjoint intervals in $(0,1)$, and let $\{a_k\}$ be a sequence of
  real numbers, such that $a_k \geq \mathcal L^1(I_k)$, fulfilling \eqref{E:ak}.

\noindent Consider any
  sequence $\{B_k\}$ of pairwise disjoint balls in $\Rn$, such that $\mathcal L^n(B_k)=a_k$ for  $k\in \N$.
  For each $k\in \N$, choose  a function $g_k : \Rn \to [0, \infty)$, supported in $B_k$, and such that $g_k$ is equimeasurable with $f\chi_{I_k}$.
Then, define $u=\sum_{k=1}^\infty g_k$. Note that $u\in
X(\Rn)$, since $u^*=(f\chi_{\bigcup_{k=1}^\infty I_k})^*\leq f^*$.
Furthermore, $u$  is supported in a set of finite measure. Thus,
assumption \textup{(ii)} implies that
\begin{equation}\label{finite measure of M}
\mathcal L^n(\{x \in\Rn : \M_{{X}}u(x) > 1 \}) < \infty.
\end{equation}
If $x\in B_k$ for some $k\in \N$, then
\begin{equation}\label {1000}
\M_{{X}}u(x) \geq  \|u\nt_{X(B_k)} = \|g_k\nt_{X(B_k)}= \|g_k^*\nt_{\oX(0,\mathcal L^n(B_k))}=\| (f\chi_{I_k})^*\nt_{{\oX(0,a_k)}}> 1.
\end{equation}
Consequently,
$$
\cup_{k=1}^\infty B_k \subseteq \{x\in \Rn: \M_{{X}}u(x)>1\}
$$
and
$$
\sum_{k=1}^\infty a_k =\mathcal L^n(\cup_{k=1}^\infty B_k) \leq \mathcal L^n(\{x \in\Rn : \M_{{X}}u(x) > 1 \}) <\infty.
$$
Condition \textup{(i)} is thus fulfilled.
\end{proof}

\section{Proofs of  Theorems~\ref{T:main-theorem1},~\ref{T:main-theorem2} and~\ref{T:main-theorem3}}\label{proofsmain}

%
%
%\textcolor{magenta}{Althouth different, the relevant properties for a   rearrangement-invariant  norm  $\|\cdot\|_{{X(\Rn)}}$ we have been concerned with are intimately related. Next result describes this connection.}

The core of Theorems~\ref{T:main-theorem1},~\ref{T:main-theorem2}
and~\ref{T:main-theorem3} is contained  in the following statement.

\begin{proposition}\label{L:v}
Given    a  rearrangement-invariant norm $\|\cdot\|_{{X(\Rn)}}$,  consider the following
properties:
\begin{enumerate}
\item[\textup{(i)}] $\|\cdot\|_{X(\Rn)}$ satisfies  the Lebesgue point property;

\item[\textup{(ii)}]  $\|\cdot\|_{X(\Rn)}$ fulfills condition \textup{(H)} of Lemma~\ref{L:absolute-continuity};

\item[\textup{(iii)}] The functional $\mathcal G_X$  is almost concave;

\item[\textup{(iv)}] The Riesz-Wiener type inequality~\eqref{E:riesz} holds for some positive constant $C$, and
for every $u\in X_{\loc}(\Rn)$;

\item[\textup{(v)}] The operator $\M_{X}$ is of weak type from $X_{\loc}(\Rn)$ into $L^1_{\loc}(\Rn)$.
\end{enumerate}

\noindent Then:

\medskip
\centerline{\textup{(i)} $\Rightarrow$ \textup{(ii)} $\Rightarrow$
\textup{(iii)} $\Rightarrow$ \textup{(iv)} $\Rightarrow$
\textup{(v)}.}
\medskip
\noindent
 If, in addition, $\|\cdot\|_{X(\Rn)}$ is locally
absolutely continuous, then
\medskip

\centerline{\textup{(v)} $\Rightarrow$ \textup{(i)}.}

\end{proposition}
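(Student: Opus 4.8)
The plan is to establish the chain of implications by translating each analytic condition into an equivalent statement about partitions of $(0,1)$ and balls in $\Rn$, using the auxiliary propositions of Section~\ref{GM}. For the implication \textup{(i)} $\Rightarrow$ \textup{(ii)}, this is exactly the content of Lemma~\ref{L:absolute-continuity}, so nothing new is required. For \textup{(iii)} $\Rightarrow$ \textup{(iv)}, I would observe that the almost concavity of $\mathcal G_X$ is equivalent, by Proposition~\ref{P:2}, to inequality \eqref{E:concavity2}, which in particular implies the weaker inequality \eqref{E:herz} (take the minimum over $k$ on the left of \eqref{E:concavity2}, and bound each $\mathcal L^n(B_k)$ trivially); then Proposition~\ref{P:herz} yields the Riesz-Wiener type inequality \eqref{E:riesz}. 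For \textup{(iv)} $\Rightarrow$ \textup{(v)}, given a bounded measurable $K$ and $u\in X_{\loc}(\Rn)$ supported in $K$, I would use \eqref{E:riesz} together with the monotonicity of $(\M_X u)^*$ and the continuity of $s\mapsto\|u^*\nt_{\oX(0,s)}$ from Lemma~\ref{T:lemma_conc}: for $t>0$, if $\mathcal L^n(\{\M_X u>t\})=s$ then $(\M_X u)^*(s)\ge t$, hence $t\le C\|u^*\nt_{\oX(0,s)}=\tfrac Cs F(s)$ where $F$ is the non-decreasing function of \eqref{F} with $F(s)/s$ non-increasing; using $F(s)\le C'\|u\|_{X(\Rn)}$ for $s$ comparable to $\mathcal L^n(K)$ and the sublinear behaviour of $F$ one extracts the weak-type bound \eqref{w-estHL}, with the constant depending on $K$ through $\|\chi_K\|_{X(\Rn)}$ via property \textup{(N5)}.

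The substantive implication is \textup{(ii)} $\Rightarrow$ \textup{(iii)}, and I expect this to be the main obstacle. The strategy is contrapositive: assuming $\mathcal G_X$ is \emph{not} almost concave, I would use the failure of \eqref{E:concavity} (the equivalent form from Proposition~\ref{P:2}) to manufacture, for every $N\in\N$, a function $f_N\in\oX_1(0,\infty)$ and a partition $\{I_k^{(N)}\}$ of $(0,1)$ with $\sum_k \mathcal L^1(I_k^{(N)})\,\|f_N\nt_{\oX(I_k^{(N)})}>N\|f_N\|_{\oX(0,\infty)}$. Normalising so that $\|f_N\|_{\oX(0,\infty)}$ is small and discarding the intervals on which $\|f_N\nt_{\oX(I_k^{(N)})}$ is bounded, one retains a subcollection of intervals, of total length controlled but with $\|f_N\nt_{\oX(I_k)}>1$ on each, whose \emph{rescaled} lengths $a_k$ (chosen $\ge\mathcal L^1(I_k)$ so that \eqref{E:ak} holds) sum to something large; stacking these configurations across all $N$ into a single function $f\in\oX_1(0,\infty)$ built by placing disjoint copies at disjoint scales yields pairwise disjoint intervals $\{I_k\}$ in $(0,1)$ and positive numbers $\{a_k\}$ with $a_k\ge\mathcal L^1(I_k)$, satisfying \eqref{E:ak}, but with $\sum_k a_k=\infty$ — contradicting \textup{(H)}. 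The delicate points are: arranging that all the pieces fit inside $(0,1)$ while keeping $f\in\oX$ (handled by the rearrangement bound $f^*\le$ a fixed integrable profile, as in the proof of Lemma~\ref{L:absolute-continuity}), and ensuring \eqref{E:ak} survives the rescaling, for which Lemma~\ref{T:lemma_conc} (monotonicity of $r\mapsto r\|f^*\nt_{\oX(0,r)}$) is the key tool.

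Finally, for the last implication, assume $\|\cdot\|_{X(\Rn)}$ is locally absolutely continuous and that \textup{(v)} holds; I would run the classical density argument adapted to this setting. Fix a bounded cube $Q$; by local separability (equivalently, local absolute continuity, via \cite[Corollary 5.6, Chap. 1]{BS}) the continuous functions are dense in the subspace of $X(\Rn)$ of functions supported in $Q$. For a continuous $v$ one checks directly that $\lim_{r\to0^+}\|v-v(x)\nt_{X(B_r(x))}=0$ at every $x$, using \eqref{norm1} and uniform continuity. For general $u$ supported in $Q$, write $u=v+w$ with $v$ continuous and $\|w\|_{X(\Rn)}$ small; the oscillation $\limsup_{r\to0^+}\|u-u(x)\nt_{X(B_r(x))}$ is dominated by $\M_X w(x)+|w(x)|$ plus the (vanishing) contribution of $v$, and the weak-type bound \eqref{w-estHL} together with property \textup{(N5)} controls the measure of the set where $\M_X w+|w|$ is large in terms of $\|w\|_{X(\Rn)}$. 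Letting $\|w\|_{X(\Rn)}\to0$ along a sequence and taking a countable union over an exhausting sequence of cubes gives \eqref{leb in X2} a.e., i.e. the Lebesgue point property.
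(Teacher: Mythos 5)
Your proposal is correct and follows essentially the same route as the paper: (i)$\Rightarrow$(ii) via Lemma~\ref{L:absolute-continuity}, the contrapositive stacking-at-dyadic-scales construction for (ii)$\Rightarrow$(iii) with Lemma~\ref{T:lemma_conc} supplying both the continuity needed to select the $a_k$ and the monotonicity of $r\mapsto r\|f^*\nt_{\oX(0,r)}$ that makes $\sum a_k$ diverge, the passage through Propositions~\ref{P:2} and~\ref{P:herz} for (iii)$\Rightarrow$(iv), the same $s(\M_Xu)^*(s)\le C\,F(s)\le C\,F(\mathcal L^n(K))$ computation for (iv)$\Rightarrow$(v), and the same splitting $u=v_\varepsilon+w_\varepsilon$ with the weak-type bound plus the embedding $X(B)\to L^1(B)$ for the final implication. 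The only deviation is your choice of continuous rather than simple functions as the dense class in the last step (the paper uses simple functions together with the Lebesgue density theorem via Lemma~\ref{T:lemma}); both work, since local absolute continuity forces $\varphi_{X(\Rn)}(s)\to 0$ as $s\to 0^+$, which is what each density argument ultimately needs.
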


A proof of Proposition~\ref{L:v} requires  the next lemma.

\begin{lemma}\label{T:lemma}
Let $\|\cdot\|_{{X(\Rn)}}$ be  a  rearrangement-invariant  norm such
that
\begin{equation}\label{limphi}
\lim _{s \to 0^+}\varphi_{{X(\Rn)}}(s) =0.
\end{equation}
If $u : \Rn \to \R$  is any
simple function, then
$$
\lim_{r\to 0^+} \|u-u(x)\nt_{{X(B_r(x))}}=0 \qquad
\hbox{for   a.e. $x \in \Rn$}.
$$
\end{lemma}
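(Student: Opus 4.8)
The plan is to reduce the assertion to the hypothesis \eqref{limphi} by means of the Lebesgue density theorem. Write the simple function in the form $u=\sum_{j=0}^m v_j\chi_{A_j}$, where $v_0=0$, the numbers $v_1,\dots,v_m$ are the distinct (finite) nonzero values of $u$, and $A_j=\{y\in\Rn:u(y)=v_j\}$; the sets $A_0,\dots,A_m$ are measurable and form a partition of $\Rn$, and $u\equiv v_j$ on $A_j$. By the Lebesgue density theorem, applied separately to each $A_j$, for a.e. $x\in\Rn$ the unique index $j$ with $x\in A_j$ satisfies
\[
\lim_{r\to0^+}\frac{\mathcal L^n(B_r(x)\setminus A_j)}{\mathcal L^n(B_r(x))}=0 .
\]
Fix such a point $x$ and such an index $j$, and note that $u(x)=v_j$.

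Next I would invoke the monotonicity of the norm. Since $u\equiv u(x)$ on $A_j$, the function $(u-u(x))\chi_{B_r(x)}$ vanishes on $A_j\cap B_r(x)$ and is bounded elsewhere by the constant $M:=2\max_{0\le i\le m}|v_i|$, which depends only on $u$. Hence
\[
\big|(u-u(x))\chi_{B_r(x)}\big|\le M\,\chi_{B_r(x)\setminus A_j}\qquad\text{a.e. in }\Rn ,
\]
so that, by property (N2) and homogeneity of $\|\cdot\nt_{X(B_r(x))}$,
\[
\|u-u(x)\nt_{X(B_r(x))}\le M\,\|\chi_{B_r(x)\setminus A_j}\nt_{X(B_r(x))} .
\]
Now I would compute the right-hand side by means of \eqref{normaaverage}: since $(\chi_{B_r(x)\setminus A_j})^*=\chi_{(0,\mathcal L^n(B_r(x)\setminus A_j))}$, a dilation gives
\[
\|\chi_{B_r(x)\setminus A_j}\nt_{X(B_r(x))}=\Big\|\chi_{\big(0,\,\mathcal L^n(B_r(x)\setminus A_j)/\mathcal L^n(B_r(x))\big)}\Big\|_{\overline X(0,\infty)}=\varphi_{X(\Rn)}\!\left(\frac{\mathcal L^n(B_r(x)\setminus A_j)}{\mathcal L^n(B_r(x))}\right),
\]
where the last equality uses that the fundamental function of $X(\Rn)$ coincides with that of its representation space $\overline X(0,\infty)$, both being equal to $s\mapsto\|\chi_{(0,s)}\|_{\overline X(0,\infty)}$ (compare \eqref{f.f.} and \eqref{repr2} in Section~\ref{S:background}).

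Finally, letting $r\to0^+$, the density statement shows that the argument of $\varphi_{X(\Rn)}$ tends to $0$; since $\varphi_{X(\Rn)}$ is non-decreasing and $\lim_{s\to0^+}\varphi_{X(\Rn)}(s)=0$ by \eqref{limphi}, we conclude that $\|u-u(x)\nt_{X(B_r(x))}\to0$ as $r\to0^+$, for a.e. $x\in\Rn$, as claimed. I do not expect any genuine obstacle in this argument; the only two points requiring a little care are the passage to a partition on whose pieces $u$ is genuinely constant, which is what makes $u-u(x)$ supported (up to a null set) on the ``bad'' set $B_r(x)\setminus A_j$, and the identification of the averaged norm of a characteristic function with a value of the fundamental function — both being immediate from the definitions recalled in Section~\ref{S:background}.
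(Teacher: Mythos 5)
Your proof is correct and follows essentially the same route as the paper's: both rest on the Lebesgue density theorem together with the identification of the averaged norm of a characteristic function of a set with a value of the fundamental function, so that \eqref{limphi} finishes the argument. The only cosmetic difference is that the paper treats each characteristic function $\chi_{E_i}$ separately and concludes via the triangle inequality, whereas you dominate $|u-u(x)|$ in one step by $M\,\chi_{B_r(x)\setminus A_j}$, with $A_j$ the level set of $u$ containing $x$.
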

\begin{proof}
Let $E$  be  any measurable subset of $\Rn$. By the Lebesgue density
theorem,
\begin{equation}
\begin{aligned}\label{E:in}
\lim_{r\to 0^+} \frac{\mathcal L^n(B_r(x)\setminus E)}
{\mathcal L^n (B_r(x))} &=0 \qquad
{\hbox{for   a.e. $x \in E$}}\\
\lim_{r\to 0^+} \frac{  \mathcal L^n(B_r(x)\cap E)}{\mathcal  L ^n
(B_r(x))}&=0 \qquad {\hbox{for  a.e. $x\in \Rn \setminus
E$}}.
\end{aligned}
\end{equation}
 Since
\begin{equation}\label{char}
 \lim_{r\to 0^+}  \|\chi_{{E}}-\chi_{{E}} (x)\nt_{{X(B_r(x))}}  =
 \begin{cases}\displaystyle
\lim_{r\to 0^+}  \Big\|\chi_{\left(0,  \frac{\mathcal L^n(B_r(x)\setminus E)}
{\mathcal L^n (B_r(x))} \right)} \Big\|_{\overline X(0,\infty)} \quad \hbox{for   a.e. $x \in E$,}\\
\displaystyle \lim_{r\to 0^+} \Big\|\chi_{\left(0, \frac{  \mathcal L^n(B_r(x)\cap E)}{\mathcal  L ^n
(B_r(x))}\right)}\Big\|_{\overline X(0,\infty)} \quad    \hbox{for
 a.e. $x\in \Rn \setminus E$,}
\end{cases}
\end{equation}
 it follows from \eqref{limphi} that
\begin{equation}\label{april2}
\lim_{r\to 0^+}  \|\chi_{{E}}-\chi_{{E}} (x)\nt_{{X(B_r(x))}} =0
\quad \hbox{for  a.e. $x\in \Rn$.}
\end{equation}
Hence, if $u$ is any simple function having the form $u=\sum_{i=1}^k
a_i\chi_{{E_i}}$, where  $E_1, \dots, E_k$  are   pairwise disjoint
measurable  subsets of $\Rn$, and   $a_1, \dots, a_k \in \mathbb R$,
then
\begin{equation}\label{simple}
\lim_{r\to 0^+} \|u-u(x)\nt_{{X(B_r(x))}} \, \leq \, \lim_{r\to 0^+}
\sum_{i=1}^k  \, |a_i|\, \|\chi_{{E_i}} -
\chi_{{E_i}}(x)\nt_{{X(B_r(x))}} = 0
\end{equation}
for   a.e. $x\in \Rn$.
\end{proof}

\begin{proof}[Proof of  Proposition~\ref{L:v}]\,
\textup{(i)} $\Rightarrow$ \textup{(ii)} This  is just the content
of Lemma~\ref{L:absolute-continuity}  above.
\\
\textup{(ii)} $\Rightarrow$ \textup{(iii)}   We prove this
implication by contradiction. Assume that the functional $\mathcal G
_X$ is not almost concave. Owing to
Proposition~\ref{P:2}, this amounts to assuming that, for every $k
\in \N$, there exist a function $f_k \in \overline X_1(0,\infty)$
and a partition $\{J_{k,l}: l=1,\dots,m_k \}$ of $(0,1)$ such that
\begin{equation}\label{101}
\sum_{l=1}^{m_k} \mathcal L^1(J_{k,l}) \, \|f_k\nt_{{\oX(J_{k,l})}}  \ > \ 4^k \|f_k\|_{{\overline X(0,\infty)}}.
\end{equation}
Define the function $f : (0, \infty ) \to \mathbb R$ as
%Let   $f \in L^0(0,1)$ be defined by
\begin{equation}\label{102a}
f(t) =\sum_{k=1}^\infty \frac{\chi_{(2^{-k}, 2^{-k+1})}(t)  \,
f_k(2^k t-1)}{2^k \, \big\|\chi_{(2^{-k}, 2^{-k+1})}\, f_k(2^k
\cdot-1)\big\|_{{\overline X(0,\infty)}}} \qquad {\hbox{for $t\in (0, \infty ) $}}.
\end{equation}
Since  $f \in L^0(0,\infty)$, $f=0$ on $(1,\infty)$ and
 $
\|f\|_{{\overline X(0,\infty)}}
\leq\sum_{k=1}^\infty   2^{-k}
 $, we have that $f \in \overline X_1(0,\infty)$.
%$$
%\|f\|_{{\overline X(0,1)}}
%\leq\sum_{k=1}^\infty \frac{\|\chi_{(\frac{1}{2^k}, \frac{1}{2^{k-1}})}f_k(2^k \cdot-1)\|_{{\overline X(0,1)}}}{2^k \|\chi_{(\frac{1}{2^k}, \frac{1}{2^{k-1}})}f_k(2^k\cdot-1)\|_{{\overline X(0,1)}}}=1<\infty.
%$$
\\
Let us denote by $\Lambda$ the set $\{(k,l)\in \N^2: l\leq m_k\}$,
ordered   according to the  lexicographic order, and define the
sequence $\{I_{k,l}\}$ as
\begin{equation}\label{intervals}
I_{k,l}=\frac{1}{2^k} J_{k,l} + \frac{1}{2^k}  \qquad
{\hbox{for   $(k, l)\in \Lambda$}}.
\end{equation}
Each element $I_{k,l}$ is  an open subinterval of $(0,1)$. Moreover,
the intervals  $I_{k,l}$ and $I_{h,j}$ are disjoint   if $(k,l)\neq
(h,j)$. Actually, if $k\neq h$, then
$$
I_{k,l} \cap I_{h,j} \ \subseteq \ (2^{-k},2^{1-k}) \cap ({2^{-h}}, {2^{1-h}})\  = \ \emptyset ;
$$
if, instead, $k= h$ but $l\neq j$, then the same conclusion immediately
follows  from the fact that the intervals $J_{k,l}$
and $J_{k,j}$ are disjoint.
 Owing to \eqref{101} and \eqref{102a},
\begin{align}\label{202}
{\sum_{(k,l)\in \Lambda}}  \mathcal L^1(I_{k,l}) \, \|f\nt_{{\overline X(I_{k,l})}}
&= {\sum_{(k,l)\in \Lambda}}   \mathcal L^1(I_{k,l}) \, \|(f\chi_{I_{k,l}})^*( \mathcal L^1(I_{k,l}) \, \cdot)\|_{{\overline X(0,\infty)}}\\
&\nonumber= {\sum_{(k,l)\in \Lambda}}  \frac{\mathcal L^1(J_{k,l})}{2^k} \,  \frac{\|(f_k\chi_{J_{k,l}})^*(\mathcal L^1(J_{k,l}) \, \cdot)\|_{{\overline X(0,\infty)}}}{2^k\|\chi_{({2^{-k}}, {2^{1-k}})}f_k(2^k \,\cdot - 1)\|_{{\overline X(0,\infty)}}}\\
&\nonumber=\sum_{k=1}^\infty \frac{1}{4^k\|f_k^*(2^k \, \cdot)\|_{{\overline X(0,\infty)}}} \sum_{l=1}^{m_k} \mathcal L^1(J_{k,l}) \, \|f_k\nt_{{\overline X(J_{k,l})}}\\
&\nonumber\geq  \sum_{k=1}^\infty \frac{4^k \|f_k\|_{{\overline
X(0,\infty)}}}{4^k \|f_k^*(2^k \,\cdot)\|_{{\overline X(0,\infty)}}}
\geq  \sum_{k=1}^\infty \frac{4^k \|f_k\|_{{\overline
X(0,\infty)}}}{4^k \|f_k^*\|_{{\overline X(0,\infty)}}} \, = \,
\sum_{k=1}^\infty 1 =\infty.
\end{align}
Set $M=\{(k,l) \in \Lambda:  \|f\nt_{{\overline X(I_{k,l})}} \leq
2\}$, and observe that
\begin{equation}\label{103}
\sum_{(k,l)\in M} \mathcal L^1(I_{k,l})\,  \|f\nt_{{\overline X(I_{k,l})}}
\leq 2\sum_{(k,l)\in M}  \mathcal L^1(I_{k,l}) \leq 2.
\end{equation}
  From \eqref{202} and \eqref{103} we thus infer that
$$
\sum_{(k,l)\in \Lambda \setminus M}  \mathcal L^1(I_{k,l})  \,  \|f\nt_{{\overline X(I_{k,l})}}
=\infty.
$$
On the other hand,   assumption (ii) implies property
\eqref{E:sufficient}. This property,  applied with
$g=f\chi_{I_{k,l}}$, in turn ensures that,   for every $(k,l)\in
\Lambda$,
$$
\lim_{t\to + \infty} \|(f\chi_{I_{k,l}})^* \nt_{{\overline  X(0,t)}}
=\lim_{t\to + \infty} \|(f\chi_{I_{k,l}} )^*(t \, \cdot)\|_{{\overline X(0,\infty)}}
\leq \lim_{t \to +\infty} \|(f\chi_{I_{k,l}})^* \, \chi_{(0,\frac 1t)}\|_{{\overline X(0,\infty)}}=0.
$$
Note that the inequality holds since
the function $(f\chi_{I_{k,l}})^*$ belongs to $\overline X_1(0,
\infty)$, and is non-increasing, and hence $(f\chi_{I_{k,l}} )^*(t
s)\leq (f\chi_{I_{k,l}})^*(s) \, \chi_{(0,\frac 1t)}(s)$ for $s
\in (0, \infty)$.

\noindent Thus, owing to
 Lemma~\ref{T:lemma_conc}, if
 $(k,l)\in \Lambda \setminus M$,  there
exists a number $a_{k,l}\geq \mathcal L^1(I_{k,l}) $ such that
\begin{equation}\label{104}\|(f\chi_{I_{k,l}})^*\nt_{{\overline
X(0,a_{k,l})}}=2\,.
\end{equation}
Furthermore, by \eqref{104},
\begin{align}\label{104bis}
\sum_{(k,l)\in \Lambda \setminus M} a_{k,l} &=
\frac{1}{2}\sum_{(k,l)\in \Lambda \setminus M} a_{k,l} \,
\|(f\chi_{I_{k,l}})^*\nt_{{\overline X(0,a_{k,l})}} \geq \frac{1}{2}
\sum_{(k,l)\in \Lambda \setminus M} \mathcal L^1(I_{k,l}) \,
\|(f\chi_{I_{k,l}})^*\nt_{{\overline X(0,\mathcal L^1(I_{k,l}))}} \\
\nonumber
 &= \frac{1}{2} \sum_{(k,l)\in \Lambda \setminus M} \mathcal L^1(I_{k,l}) \, \|f\nt_{{\overline X(I_{k,l})}} =\infty.
\end{align}
Thanks  to \eqref{104bis}, the function $f \in \overline
X_1(0,\infty)$, defined by \eqref{102a}, the  sequence
$\{I_{k,l}\}$,   defined by
\eqref{intervals}, and the  sequence $\{a_{k,l}\}$ contradict  condition (H) in
Lemma~\ref{L:absolute-continuity}, and, thus, assumption~\textup{(ii)}.
\\
\textup{(iii)} $\Rightarrow$ \textup{(iv)}  This implication follows
from Propositions~\ref{P:2} and~\ref{P:herz}, since condition
\textup{(ii)} of Proposition~\ref{P:2} trivially implies condition
\textup{(iii)} of Proposition~\ref{P:herz}.
\\
\textup{(iv)} $\Rightarrow$ \textup{(v)} Let $K$ be a bounded subset
of $\Rn$.  Fix any function $u\in X_{\loc}(\Rn)$
whose support is contained in $K$. Clearly, $u=u\chi_{K}$.
From \textup{(iv)}, we
infer that
\begin{align*}
\sup_{t>0} & t \, \mathcal L^n(\{x \in K : \M_{X} u(x)>t\})
=\sup_{t>0} t \, \mathcal L^n(\{x \in \Rn: \chi_{K}\M_{X} u(x)>t\})
 \\ &  = \sup_{s>0} s\big(\chi_{K}\, \M_{X}u\big)^*(s)
 \leq \sup_{s\in (0, \mathcal L^n(K))} s\big(\M_{X} u \big)^*(s) \leq C \sup_{s\in (0,\mathcal L^n(K))} s\| u^*\nt_{\oX(0,s)}
\\ & \leq C \, \mathcal L^n(K) \, \|u^*\nt_{\oX(0,\mathcal L^n(K))}\leq C' \|u^*\|_{\oX(0,\infty)} =C'\|u\|_{X(\Rn)},
\end{align*}
for some constants $C$ and $C'$,  where the last but one inequality
follows from Lemma~\ref{T:lemma_conc}, and the last one from the
boundedness of the dilation operator on rearrangement-invariant
spaces. Property \textup{(v)} is thus established.

Finally, assume that  $\|\cdot\|_{X(\Rn)}$ is
locally absolutely continuous and satisfies condition \textup{(v)}.
Since $\Rn$ is the  countable union of balls, in order to prove
\textup{(i)} it suffices to show that, given any $u \in
X_{\loc}(\Rn)$ and any ball $B$ in $\Rn$,
\begin{equation}\label{leb in X}
\lim _{r \to 0^+}\    \| u - u(x) \nt_{X(B_r(x))} = 0\qquad
\hbox{for   a.e. $x \in B$}.
\end{equation}
Equation  \eqref{leb in X} will in turn follow if we show that, for
every $t>0$, the set
\begin{equation}\label{E:at}
A_t=\{x\in B: \limsup_{r\to 0^+}\|u-u(x)\nt_{{X(B_r(x))}} >  2t\}
\end{equation}
has  measure zero.   To prove this, we begin by observing that,
since $\|\cdot\|_{{X(\Rn)}}$ is locally absolutely continuous,
\cite[Theorem 3.11, Chap. 1]{BS} ensures that for any
$\varepsilon>0$ there exists a simple function $v_{\varepsilon}$
supported on $B$ such that $u\chi_B=v_{\varepsilon}+w_{\varepsilon}$
and $\|w_{\varepsilon}\|_{{X(B)}}<\varepsilon$. Clearly,
$w_{\varepsilon}$ is supported on $B$ as well. Moreover,
\cite[Theorem 5.5, Part (b), Chap. 2]{BS} and Lemma~\ref{T:lemma}
imply that
$$
\lim_{r\to 0^+} \|v_{\varepsilon}-v_{\varepsilon}(x)\nt_{X(B_r(x))} =0 \qquad
\hbox{for   a.e. $x \in B$}.
$$
Fix any $\varepsilon>0$. Then
%
% and let $v$ and $w$ simply denote $v_{\varepsilon}$ and $w_{\varepsilon}$, respectively.
\begin{align*}
\limsup_{r\to 0^+} \|u-u(x)\nt_{X(B_r(x))}
&\leq \limsup_{r\to 0^+} \|v_{\varepsilon}-v_{\varepsilon}(x)\nt_{X(B_r(x))} + \limsup_{r\to 0^+} \|w_{\varepsilon}-w_{\varepsilon}(x)\nt_{X(B_r(x))}\\
&=\limsup_{r\to 0^+}
\|w_{\varepsilon}-w_{\varepsilon}(x)\nt_{X(B_r(x))} \leq
\M_{{X}}w_{\varepsilon}(x) + |w_{\varepsilon}(x)|\,
\|\chi_{(0,1)}\|_{{\overline X(0,\infty)}}.
\end{align*}
Therefore,
\begin{equation}\label{149}
A_t \, \subseteq \,
 \{x\in B: \M_{{X}}w_{\varepsilon}(x)>   t  \} \, \cup \,  \{y\in B: |w_{\varepsilon}(y)|\, \|\chi_{(0,1)}\|_{{\overline X(0,\infty)}}>  t \}
 \qquad \hbox{for  $t\in (0, \infty)$.}
  \end{equation}
Owing to \textup{(v)},
$$
\mathcal L^n(\{x\in B: \mathcal M_{{X}}w_{\varepsilon}(x)>  t \})
\leq \frac{C}{t} \, \|w_{\varepsilon}\|_{{X(B)}}  \qquad \hbox{for
$t\in (0, \infty)$.}
%< \frac{C}{t} \varepsilon.
$$
On the other hand,
\begin{align*}
\mathcal L^n(\{y\in B:
|w_{\varepsilon}(y)|\|\chi_{(0,1)}\|_{{\oX(0,\infty)}}>   t \}) \leq
\frac1t \,  \|\chi_{(0,1)}\|_{\oX(0,\infty)} \,
\|w_{\varepsilon}\|_{L^1(B)}\leq \frac{ C_0 }{t}
\|\chi_{(0,1)}\|_{{\oX(0,\infty)}} \|w_{\varepsilon}\|_{{X(B)}}
%&<\frac{C'\|1\|_{{\oX(0,1)}}}{t} \varepsilon,
\end{align*}
for every $t\in (0, \infty)$, where $C_0$ is the norm of the
embedding $X(B) \to L^1(B)$. Inasmuch as
$\|w_{\varepsilon}\|_{{X(B)}}<\varepsilon$, the last two
inequalities, combined with \eqref{149} and with the subadditivity
of the outer Lebesgue  measure, imply that the outer Lebesgue
measure of $A_t$ does not exceed
$$\frac {\varepsilon}t  \big(C+C_0\|\chi_{(0,1)}\|_{{\oX(0,\infty)}}\big)$$ for every $t\in
(0, \infty)$. Hence, $\mathcal L^n (A_t)=0$, thanks to the
arbitrariness of $\varepsilon >0$.
\end{proof}

\begin{proof}[Proof of Theorem~\ref{T:main-theorem1}]
This is a consequence of Propositions ~\ref{P:necessary} and
~\ref{L:v}.
\end{proof}

\begin{proof}[Proof of Theorem~\ref{T:main-theorem2}]
This is a consequence of Propositions ~\ref{P:necessary} and
~\ref{L:v}.
\end{proof}

\begin{proof}[Proof of Theorem~\ref{T:main-theorem3}] The equivalence of conditions
 \textup{(i)} and \textup{(ii)} follows from   Proposition~\ref{P:3}, Proposition~\ref{L:v} and
Lemma~\ref{L:absolute-continuity-bis}.
  %from the proof of
%Proposition~\ref{P:necessary} (where it is shown that condition
%\textup{(H)} from Lemma~\ref{L:absolute-continuity} implies the
%local absolute continuity of $\|\cdot\|_{X(\Rn)}$).
\\ In order to verify  the equivalence  of \textup{(ii)} and \textup{(iii)},
it   suffices to observe that, thanks to the positive homogeneity of
the maximal operator $\M_{{X}}$, one has that      $\mathcal L^n(\{x
\in\Rn : \M_{{X}}u(x) > 1 \}) < \infty$ for every $u\in X(\Rn)$
supported in a set of finite measure if, and only if, $\mathcal
L^n(\{x \in\mathbb R^n : \M_{{X}}u(x) > t \}) < \infty$ for every
$u\in X(\Rn)$ supported in a set of finite measure and for  every
$t\in (0, \infty)$.  The latter condition is equivalent to
\textup{(iii)}.
\end{proof}

\section{Proofs of Propositions~\ref{T:Lorentz}--\ref{T:Marcinkiewicz}}\label{S:examples}

In this last section, we show  how our general criteria can be
specialized to characterize those rearrangement-invariant norms,
from customary families,  which satisfy the Lebesgue point property,
as stated in Propositions~\ref{T:Lorentz}--\ref{T:Marcinkiewicz}. In fact, these propositions admit diverse
proofs, based on the different   criteria provided by Theorems~
\ref{T:main-theorem1}, ~\ref{T:main-theorem2} and~\ref{T:main-theorem3}.
For instance, Propositions \ref{T:Lorentz}--\ref{T:Lambda}
  can be derived via
 Theorem~\ref{T:main-theorem2}, combined with results on the local
absolute continuity of the norms in question and  on Riesz-Wiener
type inequalities contained in \cite{BP} (Orlicz norms),
 \cite{BMR} (norms in the Lorentz spaces
$L^{p,q}(\Rn)$), and \cite{MP} (norms in the Lorentz  endpoint
spaces $\Lambda_\varphi(\Rn)$). Let us also mention that, at least
in the one-dimensional case, results from these propositions overlap
with those of ~\cite{BS1, BS2, SS}.
%
%\textcolor{blue}{TOLTO:We shall follow the simplest approach  for
%each kind of norm.}
%
%\textcolor{red}{We note that
%Propositions~\ref{T:Lorentz}--\ref{T:Marcinkiewicz} follow from our
%Theorem~\ref{T:main-theorem2} combined with known results on local
%absolute continuity of the norms involved and with results on the
%Riesz-Wiener type inequality contained in the papers~\cite{BP} (for
%the case of Orlicz norms $\|\cdot\|_{L^A(\Rn)}$), \cite{BMR} (for
%two-parameter Lorentz norms $\|\cdot\|_{L^{p,q}(\Rn)}$) and
%\cite{MP} (for Lorentz endpoint norms
%$\|\cdot\|_{\Lambda_\varphi(\Rn)}$). Moreover, restricted to one
%dimension, some of these examples can be found also in~\cite{BS1,
%BS2, SS}.}
\par
Hereafter, we provide alternative, more self-contained proofs of
Propositions~\ref{T:Lorentz}--\ref{T:Marcinkiewicz}, relying upon
our general criteria. Let us begin with Proposition~\ref{T:Lorentz},
whose proof requires the following preliminarily lemmas.
%
%
%need a description of the almost concavity properties of the
%functional $\mathcal G _{L^{p,q}}$ associated with the Lorentz norm
%$\|\cdot \|_{L^{p,q}(\Rn)}$.

\begin{lemma}\label{L:concavity-lorentz}
Let $p, q \in [1, \infty]$ be admissible values in the definition of
the Lorentz norm \, \mbox{$\|\cdot \|_{L^{p,q}(\Rn)}$.}~Then
\begin{equation}\label{lorentzG}
\G_{L^{p,q}}(f)=
\begin{cases}
\left(p\int_0^\infty s^{q-1} (f(s))^{\frac{q}{p}}\,d \mathcal L^1 (s)\right)^{\frac{1}{q}} & \mbox{if} \  \ 1<p<\infty \mbox{ and}\  1\leq q<\infty, \mbox{or}\  p=q=1;\\
\displaystyle \sup_{s\in (0,\infty)} s (f(s))^{\frac{1}{p}} & \mbox{if}\  \ 1< p<\infty \ \mbox{and} \ q=\infty;\\
\mathcal L^1(\{s\in (0,\infty): f(s)>0\}) &\mbox{if}\  \  p=q=\infty\,,
\end{cases}
\end{equation}
for every
 non-increasing function $f: [0,\infty)
\rightarrow [0,\infty]$. Hence, the functional $\G_{L^{p,q}}$ is
  concave if $1 \leq q \leq p$.
\end{lemma}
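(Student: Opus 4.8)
The plan is to compute $\mathcal G_{L^{p,q}}$ explicitly from its definition \eqref{functional} together with the representation norm $\|\cdot\|_{L^{p,q}(0,\infty)}$ of \eqref{Lorentz}, and then to read off concavity directly from the resulting closed form. First I would record two elementary facts about the generalized right‑continuous inverse $f^{-1}(s)=\inf\{t\ge 0: f(t)\le s\}$ of a non‑increasing $f\colon[0,\infty)\to[0,\infty]$: (a) $f^{-1}$ is itself non‑increasing, so $(f^{-1})^*=f^{-1}$ a.e.\ and hence $\mathcal G_{L^{p,q}}(f)=\|f^{-1}\|_{L^{p,q}(0,\infty)}=\big\|s^{\frac1p-\frac1q}f^{-1}(s)\big\|_{L^q(0,\infty)}$; and (b) for every $s$ and a.e.\ $t$ one has $t<f^{-1}(s)\iff f(t)>s$, the value $t=f^{-1}(s)$ being the only possible exception. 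Fact (b) is the only place where the precise choice of inverse matters.

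Next comes the layer‑cake transformation. For $q<\infty$ I would write $(f^{-1}(s))^q=q\int_0^\infty t^{q-1}\chi_{\{f>s\}}(t)\,d\mathcal L^1(t)$ by (b), insert this into $\int_0^\infty s^{q/p-1}(f^{-1}(s))^q\,d\mathcal L^1(s)$, apply Tonelli, and evaluate the inner integral $\int_0^{f(t)}s^{q/p-1}\,d\mathcal L^1(s)=\tfrac pq(f(t))^{q/p}$; this yields exactly the first branch of \eqref{lorentzG}. The case $p=q=1$ is the same with all exponents equal to $1$, the functional reducing to $f\mapsto\int_0^\infty f\,d\mathcal L^1$. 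For $q=\infty$, $1<p<\infty$, the same reflection gives $\sup_s s^{1/p}f^{-1}(s)=\sup\{s^{1/p}t: f(t)>s\}=\sup_t t\,(f(t))^{1/p}$, and for $p=q=\infty$, $\|f^{-1}\|_{L^\infty(0,\infty)}=\lim_{s\to0^+}f^{-1}(s)=\mathcal L^1(\{s: f(s)>0\})$ since $f^{-1}$ is non‑increasing with support near $0$ equal to $\{f>0\}$.

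For the concavity assertion only the first branch of \eqref{lorentzG} is relevant (the ranges $p=q=\infty$ and $q=\infty>p$ are not claimed). Writing $r=q/p$ and $w(t)=pt^{q-1}\ge 0$, the hypothesis $q\le p$ gives $r\in(0,1]$, so $x\mapsto x^r$ is concave on $[0,\infty)$; integrating the pointwise inequality $\big(\sum_i\lambda_i f_i(t)\big)^r\ge\sum_i\lambda_i (f_i(t))^r$ (valid for $\lambda_i\in(0,1)$ with $\sum_i\lambda_i=1$) against $w\,d\mathcal L^1$ shows that $\Psi(f)=\int_0^\infty w(t)(f(t))^r\,d\mathcal L^1(t)$ is concave on $\mathcal C$. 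Since $q\ge1$, the function $x\mapsto x^{1/q}$ is concave and non‑decreasing on $[0,\infty)$, and a non‑decreasing concave function of a concave functional is concave; hence $\mathcal G_{L^{p,q}}=\Psi(\cdot)^{1/q}$ is concave, as asserted.

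The routine but somewhat delicate part is the transformation step: one must handle the measure‑zero set on which $\{f>s\}$ and $[0,f^{-1}(s))$ disagree, the flat parts and jumps of $f$, and the possibility that $f$ is identically $\infty$ on an initial interval — all of which become harmless once Tonelli is applied. With the closed form \eqref{lorentzG} in hand, the concavity for $1\le q\le p$ is immediate, the two structural reasons being $q/p\le 1$ and $1/q\le 1$.
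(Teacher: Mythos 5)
Your argument is correct and follows essentially the same route as the paper's: the paper obtains \eqref{lorentzG} from the standard expression of the Lorentz norm in terms of the distribution function (citing \cite[Proposition~1.4.9]{G}) together with \eqref{easy G_X}, whereas you simply reprove that expression directly via the layer-cake identity and Tonelli, and your concavity argument (pointwise concavity of $x\mapsto x^{q/p}$ followed by the non-decreasing concave map $x\mapsto x^{1/q}$) is exactly the one the paper intends. One small correction: since the admissible range includes $p=q=\infty$ and $\infty\le\infty$, that case \emph{is} covered by the assertion that $\G_{L^{p,q}}$ is concave for $1\le q\le p$, so you should add the one-line observation that for non-increasing $f_i$ the sets $\{f_i>0\}$ are intervals anchored at $0$, whence $\mathcal L^1\big(\big\{\sum_i\lambda_i f_i>0\big\}\big)=\max_i\mathcal L^1(\{f_i>0\})\ge\sum_i\lambda_i\,\mathcal L^1(\{f_i>0\})$.
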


\begin{proof} Equation \eqref{lorentzG}
follows from a well-known expression of Lorentz norms in terms of
the distribution function (see, e.g.,~\cite[Proposition 1.4.9]{G}),
from  equality~\eqref{easy G_X} and from the fact that every
non-increasing function $f:[0,\infty)\rightarrow [0,\infty]$
agrees a.e. with the function
$f=(f_*)_*$.
\\ The fact
that $\G_{L^{p,q}}$ is concave if $1 \leq q \leq
p$ is an easy consequence of  the representation formulas
\eqref{lorentzG}. In particular, the fact that the function $[0,
\infty) \ni t \mapsto t^\alpha$ is concave if $0 < \alpha \leq 1$ plays a role here.
\end{proof}
%
%\textcolor{red}{The following lemma will be used in  the proof of
%the fact that the Lorentz norm $\|\cdot\|_{L^{p,q}(\Rn)}$ with
%$1\leq p<q<\infty$ does not have the Lebesgue point property.}

\begin{lemma}\label{L:non-concavity-Lorentz}
Suppose that $1\leq p<q<\infty$. Then there exists a function $u\in
L^{p,q}(\Rn)$, having support of finite measure, such that
\begin{equation}\label{E:orlicz}
\mathcal L^n(\{x\in \Rn: \mathcal M_{L^{p,q}} u(x) >1\})=\infty.
\end{equation}
\end{lemma}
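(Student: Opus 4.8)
The plan is to build $u$ explicitly as a superposition of spikes sitting on pairwise disjoint balls, with the spikes getting so rapidly taller and narrower that $u$ still belongs to $L^{p,q}(\Rn)$ and has support of finite measure, while the averaged Lorentz norm of each spike over a ball much larger than the spike itself stays bounded away from $0$. The key tool is the scaling identity for averaged Lorentz norms: from \eqref{normaaverage}, together with the fact that $\|\cdot\|_{L^{p,q}(0,\infty)}$ is the representation norm of $\|\cdot\|_{L^{p,q}(\Rn)}$, a change of variables yields, for every ball $B\subseteq\Rn$ and every $v$ supported in $B$,
\[
\|v\nt_{L^{p,q}(B)}=\mathcal L^n(B)^{-1/p}\,\|v\|_{L^{p,q}(\Rn)},
\]
up to the fixed multiplicative constant relating the functional \eqref{Lorentz} to a genuine norm; this constant is harmless, being absorbable by rescaling $u$, so I shall ignore it. In particular $\|\chi_E\|_{L^{p,q}(\Rn)}=(p/q)^{1/q}\,\mathcal L^n(E)^{1/p}$ for any set $E$ of finite measure.

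Here is the construction. Fix pairwise disjoint balls $B_k$ ($k\ge k_0$) with $\mathcal L^n(B_k)=\rho_k:=\tfrac18(p/q)^{p/q}k^{-1}$, say with centers spread along a coordinate axis so as to guarantee disjointness; inside each $B_k$ select a measurable set $E_k$ with $\mathcal L^n(E_k)=\epsilon_k:=2^{-k-1}$, which is possible once $k\ge k_0$ is large enough that $\epsilon_k<\rho_k$. Put
\[
u=\sum_{k\ge k_0}c_k\,\chi_{E_k},\qquad c_k:=\Big(\tfrac{2^k}{k}\Big)^{1/p}.
\]
Since $c_k^p\epsilon_k=\tfrac1{2k}$ we have $\rho_k=\tfrac14(p/q)^{p/q}c_k^p\epsilon_k$, and as $u\chi_{B_k}=c_k\chi_{E_k}$ the scaling identity gives
\[
\|u\nt_{L^{p,q}(B_k)}=\rho_k^{-1/p}(p/q)^{1/q}\big(c_k^p\epsilon_k\big)^{1/p}=4^{1/p}>1
\]
for every $k\ge k_0$. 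Hence $\mathcal M_{L^{p,q}}u(x)\ge 4^{1/p}>1$ for each $x\in B_k$, so $\{x\in\Rn:\mathcal M_{L^{p,q}}u(x)>1\}\supseteq\bigcup_{k\ge k_0}B_k$, a set of measure $\sum_{k\ge k_0}\rho_k=\tfrac18(p/q)^{p/q}\sum_{k\ge k_0}k^{-1}=\infty$.

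It then remains to check that $u$ is admissible in the sense of the statement. Its support has measure $\mathcal L^n(\operatorname{supp}u)=\sum_{k\ge k_0}\epsilon_k\le 1<\infty$. For membership in $L^{p,q}(\Rn)$, disjointness of the $E_k$ gives $\sum_{j\ge k}\epsilon_j=2^{-k}$, so $u^*$ equals $c_k$ on $(2^{-k-1},2^{-k})$ and vanishes on $(2^{-k_0},\infty)$; therefore
\[
\|u\|_{L^{p,q}(\Rn)}^q=\frac pq\sum_{k\ge k_0}c_k^q\big(2^{-kq/p}-2^{-(k+1)q/p}\big)=\frac pq\big(1-2^{-q/p}\big)\sum_{k\ge k_0}k^{-q/p},
\]
which is finite \emph{exactly because} $q>p$. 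This is the crux of the matter: the parameters must be chosen so that $\sum_k\rho_k$ diverges (to make $\mathcal M_{L^{p,q}}u$ large on a set of infinite measure) while $\sum_k\epsilon_k$ and $\sum_k c_k^q2^{-kq/p}$ both converge (for finite support and for $u\in L^{p,q}(\Rn)$), and such a regime is available precisely when $p<q$ --- in agreement with the positive statement for $q\le p$ in Lemma~\ref{L:concavity-lorentz}. The remaining points --- disjointness of the $B_k$ and the inequality $\epsilon_k<\rho_k$ for $k\ge k_0$, valid since $2^{-k-1}$ decays faster than $k^{-1}$ --- are routine.
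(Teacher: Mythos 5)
Your proof is correct. You verify the scaling identity $\|v\nt_{L^{p,q}(B)}=\mathcal L^n(B)^{-1/p}\|v\|_{L^{p,q}(\Rn)}$ correctly, the arithmetic $c_k^p\epsilon_k=\tfrac1{2k}$ does give $\|u\nt_{L^{p,q}(B_k)}=4^{1/p}>1$ on each $B_k$, the divergence of $\sum\rho_k$ and the convergence of $\sum\epsilon_k$ and of $\sum c_k^q2^{-kq/p}=\sum k^{-q/p}$ (using $q>p$) are all as claimed, and your remark about absorbing the equivalence constant between the functional \eqref{Lorentz} and a genuine norm by rescaling $u$ disposes of the only technical loose end. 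The underlying counterexample is structurally the same as the paper's: a step function with values $(\lambda^k/k)^{1/p}$ on geometrically shrinking sets, tested against windows of measure comparable to $1/k$ (the paper uses $\lambda=3$ and intervals $I_k=(\tfrac1{2\cdot3^k},\tfrac1{2\cdot3^{k-1}})$ with $a_k=1/k$; you use $\lambda=2$ and balls of measure $\sim 1/k$). Where you differ is in the logical route: the paper only checks that condition (H) of Lemma~\ref{L:absolute-continuity} fails on $(0,\infty)$ and then invokes the abstract transference of Proposition~\ref{P:3} to produce $u$ on $\Rn$, whereas you carry out that transference by hand, placing the spikes on explicit disjoint balls and estimating $\M_{L^{p,q}}$ directly. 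Your version is self-contained and makes the mechanism visible (divergence of $\sum\rho_k$ versus convergence of $\sum k^{-q/p}$ is exactly the $p<q$ window); the paper's version is shorter because Proposition~\ref{P:3} has already done the geometric work once and for all, and the same (H)-based template is then reused verbatim for the Orlicz case in Lemma~\ref{L:orlicz}.
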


\begin{proof}
We shall prove that the norm
$\|\cdot \|_{L^{p,q}(\Rn)}$ does not satisfy condition \textup{(H)}
from Lemma~\ref{L:absolute-continuity},  if $1\leq p<q<\infty$. The
conclusion will then follow via Proposition~\ref{P:3}.
\\
To this purpose, define $f : (0,
\infty) \to [0, \infty)$ as
\begin{equation}\label{E:def}
f(s)= c \sum_{k=1}^\infty \left(\frac{3^k}{k}\right)^{\frac1p}
\chi_{_{\left(\frac{1}{2\cdot 3^k},\frac{1}{2\cdot
3^{k-1}}\right)}}(s) \qquad \text{for} \ s \in (0,\infty),
\end{equation}
where $c>\big(\frac qp\big)^{\frac 1q}$. Observe that $f=
f^*\chi_{(0,1)}$ a.e., since $f$ is  a nonnegative decreasing
function in $(0,\infty)$ with support in $(0,1)$. Moreover,
$f \in {L^{p,q}(0,\infty)}$, since
\begin{equation}\nonumber
\left\|f\right\|_{L^{p,q}(0,\infty)}^q
= c^q \sum_{k=1}^\infty \int_{\frac{1}{2\cdot 3^k}}^{\frac{1}{2\cdot 3^{k-1}}} \left(\frac{3^k}{k}\right)^{\frac qp} s^{\frac qp-1}\,d \mathcal L^1 (s)\\
\leq c^q \sum_{k=1}^\infty  \left(\frac{3^k}{k}\right)^{\frac qp}
\int_{0}^{\frac{1}{2\cdot 3^{k-1}}} s^{\frac qp-1}\,d \mathcal L^1
(s)
 <   \infty\,,
\end{equation}
thanks to the assumption that $q>p$. For each $k \in \N$, set
$I_k=\left(\frac{1}{2 \cdot 3^k},\frac{1}{2 \cdot 3^{k-1}}\right)$
and $a_k=\frac1k$. Then
\begin{align*}
\|(f\chi_{I_k})^*\nt_{_{L^{p,q}(0,a_k)}} = c \left(\int_0^\infty
\left(\frac{3^k}{k}\right)^{\frac qp} \chi_{(0,\frac{1}{3^k})}
\left(\frac sk\right) s^{\frac qp-1}\,d \mathcal L^1 (s)
\right)^{\frac 1q} =c \left(\frac pq\right)^{\frac 1q} >1,
\end{align*}
and hence   condition (H) of Lemma~\ref{L:absolute-continuity} fails
for the  norm $\|\cdot\|_{L^{p,q}(\Rn)}$.
\end{proof}

\iffalse

\begin{proof}\textcolor{blue}{Thanks to Proposition~\ref{P:3},
 it suffices to verify that condition \textup{(H)} in Lemma~\ref{L:absolute-continuity}
 for the Lorentz norm $\|\cdot\|_{L^{p,q}(\Rn)}$ when $1\leq p<q<\infty$.}

\textcolor{blue}{For this,} \textcolor{red}{define
\begin{equation}\label{E:def}
f(x)= c \sum_{k=1}^\infty \left(\frac{3^k}{k}\right)^{\frac1p}
\chi_{_{\left(\frac{1}{2\, 3^k},\frac{1}{2\, 3^{k-1}}\right)}}(x)
\qquad \text{for} \ x \in (0,\infty),
\end{equation}
where $c>\big(\frac qp\big)^{\frac 1q}$.} \textcolor{blue}{Since $f$
a nonnegative decreasing function on $(0,\infty)$ with support on
$(0,1)$, then  $f= f^*\chi_{(0,1)}$. Moreover, $f \in
{L^{p,q}(0,1)}$. Indeed,}
 \textcolor{red}{\begin{equation}\nonumber
\left\|f\right\|_{L^{p,q}(0,1)}^q
= c^q \sum_{k=1}^\infty \int_{\frac{1}{2\cdot 3^k}}^{\frac{1}{2\cdot 3^{k-1}}} \left(\frac{3^k}{k}\right)^{\frac qp} s^{\frac qp-1}\,d \mathcal L^1 (s)\\
\leq c^q \sum_{k=1}^\infty  \left(\frac{3^k}{k}\right)^{\frac qp}
\int_{0}^{\frac{1}{2\cdot 3^{k-1}}} s^{\frac qp-1}\,d \mathcal L^1
(s)
 <   \infty
\end{equation}
for $q>p$. For each $k \in \N$, set $I_k=\left(\frac{1}{2\,
3^k},\frac{1}{2\, 3^{k-1}}\right)$ and $a_k=\frac1k$. Then
\begin{align*}
\|(f\chi_{I_k})^*\nt_{_{L^{p,q}(0,a_k)}} = c \left(\int_0^1
\left(\frac{3^k}{k}\right)^{\frac qp} \chi_{(0,\frac{1}{3^k})}
\left(\frac sk\right) s^{\frac qp-1}\,d \mathcal L^1 (s)
\right)^{\frac 1q} =c \left(\frac pq\right)^{\frac 1q} >1,
\end{align*}}
\textcolor{blue}{whence condition (H) is not fulfilled.}

\fi

We are now in a position to accomplish the proof of
Proposition~\ref{T:Lorentz}.

\par\noindent
\begin{proof}[Proof of Proposition~\ref{T:Lorentz}.] By Lemma~\ref{L:concavity-lorentz}, the
 functional $\G_{L^{p,q}}$ is concave if $1 \leq q \leq p$. Moreover, the norm $\|\cdot \|_{L^{p,q}(\Rn)}$ is locally absolutely continuous
 if and only if
  $q< \infty$  -- see e.g.~ \cite[Theorem~8.5.1]{PK}.  Thereby, an application of Theorem \ref{T:main-theorem1}
   tells us that,  if $1 \leq q \leq p<\infty$, then the norm $\|\cdot \|_{L^{p,q}(\Rn)}$ has the Lebesgue point property.
\\ On the other hand, coupling
 Theorem~\ref{T:main-theorem3} with
Lemma~\ref{L:non-concavity-Lorentz} implies that the norm $\|\cdot
\|_{L^{p,q}(\Rn)}$ does not have the Lebesgue point property if $1 \leq
p<q<\infty$. \\ In the remaining case when $q=\infty$, the norm
$\|\cdot \|_{L^{p,q}(\Rn)}$ is not locally absolutely continuous.
Hence, by Theorem~\ref{T:main-theorem1}, it does not have the
Lebesgue point property.
\end{proof}

One proof of Proposition~\ref{T:Orlicz}, dealing with Orlicz norms,
will follow from Theorem~\ref{T:main-theorem3}, via the next lemma.

\begin{lemma}\label{L:orlicz}
Let $A$ be a Young function satisfying the $\Delta_2$-condition near
infinity. Then
\begin{equation*}
\mathcal L^n(\{x\in \Rn: \mathcal M_{L^A} u(x) >1\})<\infty
\end{equation*}
for every $u\in L^A(\Rn)$,   supported in a set of finite measure.
\end{lemma}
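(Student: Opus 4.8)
The plan is to reduce the assertion, via Proposition~\ref{P:3}, to the verification of condition~\textup{(H)} of Lemma~\ref{L:absolute-continuity} for the Orlicz norm $\|\cdot\|_{L^A(\Rn)}$. Recall that the representation norm of $\|\cdot\|_{L^A(\Rn)}$ is the Luxemburg norm $\|\cdot\|_{L^A(0,\infty)}$, so here $\overline X_1(0,\infty)$ is the set of functions in $L^A(0,\infty)$ that vanish a.e. outside $(0,1)$, and $A$ is finite-valued by the $\Delta_2$ assumption.

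First I would unwind the hypothesis of condition~\textup{(H)}. Let $f\in\overline X_1(0,\infty)$, let $\{I_k\}$ be pairwise disjoint intervals in $(0,1)$, and let $\{a_k\}$ be positive numbers with $a_k\geq\mathcal L^1(I_k)$ and $\|(f\chi_{I_k})^*\nt_{\overline X(0,a_k)}>1$. By the definition~\eqref{normaaverage} of the averaged norm together with the definition~\eqref{Orlicz} of the Luxemburg norm relative to the normalized measure $\tfrac1{a_k}\mathcal L^1$ on $(0,a_k)$, using that $(f\chi_{I_k})^*$ is supported in $(0,\mathcal L^1(I_k))\subseteq(0,a_k)$ and that the Luxemburg norm of a function does not exceed $1$ once the corresponding modular does not exceed $1$, one sees that $\|(f\chi_{I_k})^*\nt_{\overline X(0,a_k)}>1$ forces $\tfrac1{a_k}\int_0^{a_k}A\bigl((f\chi_{I_k})^*(t)\bigr)\,d\mathcal L^1(t)>1$. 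By equimeasurability of $(f\chi_{I_k})^*$ with $|f|\chi_{I_k}$ and $A(0)=0$, this is the same as
\[
\int_{I_k}A\bigl(|f(t)|\bigr)\,d\mathcal L^1(t)>a_k\qquad\text{for every }k\in\N .
\]
Summing over $k$ and using that the $I_k$ are pairwise disjoint subsets of $(0,1)$ yields
\[
\sum_{k=1}^\infty a_k<\sum_{k=1}^\infty\int_{I_k}A\bigl(|f|\bigr)\,d\mathcal L^1\leq\int_0^1 A\bigl(|f(t)|\bigr)\,d\mathcal L^1(t) .
\]

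Everything therefore hinges on the finiteness of the last integral, and this is precisely where the $\Delta_2$-condition near infinity enters. Membership of $f$ in $L^A(0,\infty)$ only gives $\int_0^1 A(|f|/\lambda)\,d\mathcal L^1\leq1$ for $\lambda=\|f\|_{L^A(0,\infty)}$, which controls $\int_0^1 A(|f|)\,d\mathcal L^1$ only when $\lambda\leq1$. However, $|f|$ has support contained in the set $(0,1)$ of finite measure and lies in $L^A(0,\infty)$, so the property of Orlicz functions recalled in Section~\ref{S:background} --- namely, that $\int A(c\,|v|)\,d\mathcal L^1<\infty$ for every $c>0$ whenever $A$ satisfies the $\Delta_2$-condition near infinity and $v$ has support of finite measure, which is a purely measure-theoretic statement valid over $(0,\infty)$ as well as over $\Rn$ --- applied with $c=1$, gives $\int_0^1 A(|f|)\,d\mathcal L^1<\infty$. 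Hence $\sum_{k=1}^\infty a_k<\infty$, so condition~\textup{(H)} holds for $\|\cdot\|_{L^A(\Rn)}$, and Proposition~\ref{P:3} yields the conclusion of the lemma. There is no real obstacle beyond carrying out the reduction carefully: the one thing to watch is that passing from the averaged-norm inequality to the modular inequality introduces no spurious constant, so that the threshold remains exactly $1$ and matches the $c=1$ instance of the $\Delta_2$ fact; after that, the required finiteness of $\int_0^1 A(|f|)\,d\mathcal L^1$ is literally the statement recorded in the background section. (Alternatively, one may argue directly: for $u\in L^A(\Rn)$ with support of finite measure, set $M=\int_{\Rn}A(|u|)\,d\mathcal L^n<\infty$ by the same $\Delta_2$ fact; each ball $B$ witnessing $\M_{L^A}u(x)>1$ satisfies $\int_B A(|u|)\,d\mathcal L^n>\mathcal L^n(B)$, hence $\mathcal L^n(B)<M$, and a Vitali covering argument as in the proof of Proposition~\ref{P:3} gives $\mathcal L^n(\{\M_{L^A}u>1\})\leq 5^n M<\infty$.)
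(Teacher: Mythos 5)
Your proposal is correct and follows essentially the same route as the paper: reduce to condition (H) via Proposition~\ref{P:3}, convert the averaged-norm inequality into the modular inequality $\int_{I_k}A(|f|)\,d\mathcal L^1>a_k$, and conclude by summing and invoking the $\Delta_2$-fact that $\int_0^1A(|f|)\,d\mathcal L^1<\infty$. The extra care you take in justifying the passage from norm to modular, and the alternative Vitali-covering argument in parentheses, are fine but not needed beyond what the paper already does.
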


\begin{proof}
Owing to Proposition~\ref{P:3}, it suffices to show that condition
\textup{(H)} from Lemma~\ref{L:absolute-continuity} is fulfilled by
the Luxemburg norm.
\\
Consider any function $f\in L^A_1(0,\infty)$,    any  sequence
$\{I_k\}$ of pairwise disjoint intervals in $(0,1)$, and any
sequence $\{a_k\}$ of positive real numbers such that
$$
a_k \geq \mathcal L^1(I_k) \quad \text{and} \quad
\|(f\chi_{I_k})^*\nt_{L^A(0,a_k)} > 1 \quad \hbox{for $k\in \N$.}
$$
Since
\begin{equation}\label{orlicz1}\nonumber
{a_k} < \int_{0}^{a_k} A ((f\chi_{I_k})^*(s))\, d \mathcal L^1 (s)
=\int_{I_k} A(|f(s)|)\,d \mathcal L^1 (s)
\end{equation}
%By \eqref{orlicz1}, the made assumption on $f$ and $A$ one obtains
for every $k \in \N$, one has that
\begin{equation}\nonumber
\sum_{k=1}^\infty a_k  <  \sum_{k=1}^\infty \int_{I_k} A(|f(s)|)\,d
\mathcal L^1 (s) \leq  \int_{0}^1A (|f(s)|)\, d \mathcal L^1 (s) <
\infty.
\end{equation}
Notice that the last inequality holds owing to the assumption that
$A$  satisfies the $\Delta_2$-condition near infinity, and $f$ has
support of finite measure. Altogether, condition \textup{(H)} is
satisfied by the norm $\|\cdot\|_{L^A(\mathbb R^n)}$.
\end{proof}

\begin{proof}[Proof of Proposition~\ref{T:Orlicz}.] If $A$ satisfies
the $\Delta _2$-condition near infinity, then the norm
$\|\cdot\|_{L^A(\Rn)}$ fulfills the Lebesgue point property, by
Lemma~\ref{L:orlicz} and  Theorem~\ref{T:main-theorem3}. Conversely,
assume that the norm $\|\cdot\|_{L^A(\Rn)}$ fulfills the Lebesgue
point property. Then it has to be locally absolutely continuous, by
either Theorem~\ref{T:main-theorem1} or Theorem~\ref{T:main-theorem2}. Owing to
\cite[Theorem 14 and Corollary 5, Section 3.4]{RR}, this implies
that  $A$ satisfies the $\Delta _2$-condition near infinity.
\end{proof}

In the next proposition, we point
out  the property, of independent interest,  that the functional
$\mathcal G_{L^A}$ is almost concave for \textit{any} $N$-function
$A$. Such a  property, combined with the fact that the norm
$\|\cdot\|_{L^A(\mathbb R^n)}$ is locally absolutely continuous if
and only if $A$ satisfies the $\Delta _2$-condition near infinity,
leads to an alternative proof of Proposition~\ref{T:Orlicz}, at
least when $A$ is an $N$-function, via
Theorem~\ref{T:main-theorem1}.

\begin{proposition}\label{orliczconcave}
The functional $\mathcal G_{L^A}$ is  almost concave for every
$N$-function $A$.
\end{proposition}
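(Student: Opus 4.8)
The plan is to derive the almost concavity of $\mathcal{G}_{L^A}$ from the equivalent condition~\textup{(ii)} of Proposition~\ref{P:2}, applied to the representation norm $\oX(0,\infty)=L^A(0,\infty)$. So it suffices to exhibit a constant $C$ for which
\begin{equation*}
\sum_{k=1}^m \mathcal{L}^1(I_k)\,\|f\nt_{L^A(I_k)}\ \leq\ C\,\|f\|_{L^A(0,\infty)}
\end{equation*}
holds for every $f\in L^A_1(0,\infty)$ and every partition $\{I_k:k=1,\dots,m\}$ of $(0,1)$; I expect the argument below to yield $C=2$.

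First I would normalize, taking $\|f\|_{L^A(0,\infty)}=1$. Since $A$ is an $N$-function, it is finite-valued and continuous, and a standard property of the Luxemburg norm then gives $\int_0^\infty A(|f|)\,d\mathcal{L}^1\leq1$; as $f$ vanishes a.e.\ outside $(0,1)$, this reads $\int_0^1 A(|f|)\,d\mathcal{L}^1\leq1$. Fixing a partition $\{I_k\}_{k=1}^m$ of $(0,1)$, I write $\mu_k=\mathcal{L}^1(I_k)$ and $\lambda_k=\|f\nt_{L^A(I_k)}$; by the definition of the averaged norm, $\lambda_k$ is the Luxemburg norm of $f\chi_{I_k}$ with respect to the normalized measure $\mathcal{L}^1/\mu_k$ on $I_k$, so the modular $\rho_k(\lambda)=\tfrac{1}{\mu_k}\int_{I_k}A(|f|/\lambda)\,d\mathcal{L}^1$ is non-increasing in $\lambda$. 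The crucial preliminary point is that for $\lambda\geq1$ the integrand is dominated by $A(|f|)$, which the normalization places in $L^1(I_k)$; hence $\rho_k$ is continuous on $[1,\infty)$ and tends to $0$ at infinity, so that either $\lambda_k\leq1$, or $\lambda_k>1$ and $\rho_k(\lambda_k)=1$, i.e.\ $\int_{I_k}A(|f|/\lambda_k)\,d\mathcal{L}^1=\mu_k$.

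With this dichotomy I would split the sum over $k$. For the indices with $\lambda_k>1$, convexity of $A$ together with $A(0)=0$ gives $\lambda_k A(t/\lambda_k)\leq A(t)$ for all $t\geq0$, whence $\lambda_k\mu_k=\int_{I_k}\lambda_k A(|f|/\lambda_k)\,d\mathcal{L}^1\leq\int_{I_k}A(|f|)\,d\mathcal{L}^1$; summing and using that $\{I_k\}$ partitions $(0,1)$ bounds this portion of the sum by $\int_0^1 A(|f|)\,d\mathcal{L}^1\leq1$. For the indices with $\lambda_k\leq1$ one has simply $\lambda_k\mu_k\leq\mu_k$, and summing bounds that portion by $\sum_{k=1}^m\mu_k=1$. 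Adding the two estimates yields the displayed inequality with $C=2$, and Proposition~\ref{P:2} then gives the almost concavity of $\mathcal{G}_{L^A}$.

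The step I anticipate to be the main obstacle is the dichotomy for $\lambda_k$, namely that when $\lambda_k>1$ the modular $\rho_k$ genuinely attains the value $1$ at $\lambda_k$ (rather than jumping below it). This is precisely where the hypothesis that $A$ is an $N$-function is used: it guarantees that $A$ is finite-valued and continuous, and the normalization furnishes the integrable majorant $A(|f|)\in L^1(I_k)$ for the family $A(|f|/\lambda)$, $\lambda\geq1$. I also note that no $\Delta_2$-condition on $A$ is invoked, in agreement with the remark preceding the statement; the $\Delta_2$-condition is relevant only for the local absolute continuity of $\|\cdot\|_{L^A(\Rn)}$.
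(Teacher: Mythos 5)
Your argument is correct, but it takes a genuinely different route from the paper. The paper proves concavity of $\mathcal G$ for the equivalent Amemiya--Orlicz norm $\|u\|_{L_A(\Rn)}=\inf_{k>0}\tfrac 1k\big(1+\int_{\Rn}A(k|u|)\,dx\big)$, rewriting $\tfrac 1k\big(1+\int A(k|u|)\big)=\tfrac1k+\int_0^\infty A'(kt)\,u_*(t)\,dt$ so that $\mathcal G_{L_A}$ becomes an infimum of affine functionals of $f$, hence concave; the equivalence of the two norms (quoted from Rao--Ren) then yields almost concavity of $\mathcal G_{L^A}$. You instead verify condition (ii) of Proposition~\ref{P:2} directly from the definition of the Luxemburg norm: after normalizing $\|f\|_{L^A(0,\infty)}=1$, the dichotomy on $\lambda_k=\|f\nt_{L^A(I_k)}$ (either $\lambda_k\le 1$, or $\lambda_k>1$ and the modular equals $1$ at $\lambda_k$), combined with the convexity inequality $\lambda A(t/\lambda)\le A(t)$ for $\lambda\ge 1$, gives $\sum_k \mathcal L^1(I_k)\,\lambda_k\le \int_0^1A(|f|)+ \sum_k\mathcal L^1(I_k)\le 2$. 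All the individual steps check out: the attainment of the modular value $1$ does follow from dominated convergence with majorant $A(|f|)\in L^1(0,1)$ (here finiteness and continuity of the $N$-function $A$, and $\int_0^1 A(|f|)\le 1$ under the normalization, are exactly what is needed), and homogeneity recovers the unnormalized inequality. Your proof is more self-contained — it avoids the external fact that the Luxemburg and Amemiya norms are equivalent — and it is close in spirit to the modular estimate $a_k<\int_{I_k}A(|f|)\,d\mathcal L^1$ used in the paper's Lemma~\ref{L:orlicz}; the paper's route is shorter and conceptually cleaner in that it exhibits an honestly concave equivalent functional, which also explains \emph{why} almost concavity holds. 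Both arguments produce comparable constants.
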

\begin{proof} The norm $\|\,\cdot \,
\|_{L^A(\Rn)}$ is equivalent, up to multiplicative constants, to the
norm $\| \cdot \|_{L_A(\Rn)}$
defined as
$$\|u \|_{L_A(\Rn)}= \inf \bigg\{\frac 1k\bigg( 1 + \int _{\Rn} A(k|u(x)|)\,
dx\bigg) : k>0\bigg\}$$
 for $u \in L^0(\Rn )$ -- see \cite[Section 3.3, Proposition 4 and Theorem
 13]{RR}. One has that
$$\frac 1k\bigg( 1 + \int _{\Rn} A(k|u(x)|)\,
dx\bigg) = \frac 1k  + \int _0^\infty  A'(kt) u_*(t)\, dt,$$
 where $A'$ denotes  the left-continuous derivative of
 $A$. Altogether, we have that
 $$\mathcal G_{L_A} (f) = \inf \bigg\{\frac 1k  + \int _0^\infty  A'(kt) f(t)\,
 dt\bigg\}$$
 for every non-increasing function $f : [0, \infty ) \to [0,
 \infty)$. The functional $\mathcal G_{L_A}$ is concave, since it is
 the infimum of a family of linear functionals, and hence the
 functional $\mathcal G_{L^A}$ is almost concave.
  \end{proof}

Let us next focus on the case of Lorentz  endpoint norms, which is
the object of Proposition~\ref{T:Lambda}.

\begin{lemma}\label{L:concavity-lorentz-endpoint}
Assume that $\varphi : [0, \infty) \to [0, \infty)$ is a (non
identically vanishing) concave function.
%
%\textcolor{red}{ $\lim_{t\to0_+}\varphi(t)=0$, $\varphi(0)=0$ and
%$\varphi(t)>0$ whenever $t\in(0,\infty)$.}
% such that $\lim_{s\to 0_+} \varphi(s)=\varphi(0)=0$, and
%$\varphi(s)>0$ for $s>0$.
Then
\begin{equation}\label{LambdaG}
\G_{\Lambda_\varphi}(f)=\int_0^{\mathcal L^1(\{ f>0 \})}
\varphi(f(t))\,d \mathcal L^1 (t)
\end{equation}
for every
 non-increasing function $f: [0,\infty)
\rightarrow [0,\infty]$. In particular, the functional
$\G_{\Lambda_\varphi}$ is concave.
\end{lemma}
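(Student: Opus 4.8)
The plan is to unwind the definition of $\G_{\Lambda_\varphi}$, reduce the computation of $\G_{\Lambda_\varphi}(f)$ to a one-dimensional Stieltjes integral, and then convert that integral by Tonelli's theorem. Exactly as in the proof of Lemma~\ref{L:concavity-lorentz}, equation~\eqref{easy G_X}, combined with the fact that the non-increasing function $f$ agrees a.e.\ with $(f_*)_*$, gives
\[
\G_{\Lambda_\varphi}(f)=\|f_*\|_{\overline{\Lambda_\varphi}(0,\infty)},
\]
where $f_*$ denotes the distribution function of $f$. Since the representation norm of $\|\cdot\|_{\Lambda_\varphi(\Rn)}$ is $\|\cdot\|_{\Lambda_\varphi(0,\infty)}$, and since $f_*$, being non-increasing, coincides a.e.\ with its own decreasing rearrangement, formula~\eqref{Lorentz w} for the representation norm yields
\[
\G_{\Lambda_\varphi}(f)=\int_0^\infty f_*(s)\,d\varphi(s).
\]

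The second step is the evaluation of this integral. I would write $f_*(s)=\mathcal L^1(\{t\in(0,\infty):f(t)>s\})$ and apply Tonelli's theorem — legitimate because the integrand is non-negative — to interchange the order of integration:
\[
\int_0^\infty f_*(s)\,d\varphi(s)=\int_{(0,\infty)}\Big(\int_{[0,f(t))}d\varphi(s)\Big)d\mathcal L^1(t).
\]
Now $\varphi(0)=0$, because $\varphi$ is the fundamental function of $\Lambda_\varphi(\Rn)$, and $\varphi$ is continuous on $(0,\infty)$, being concave; hence, with the natural conventions $\varphi(0)=0$ and $\varphi(+\infty)=\lim_{s\to\infty}\varphi(s)$, one has $\int_{[0,f(t))}d\varphi(s)=\varphi(f(t))$ for every $t$ — note that the possible atom of $d\varphi$ at the origin, of mass $\varphi(0^+)$, is cancelled by the value $\varphi(0)=0$. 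This gives $\G_{\Lambda_\varphi}(f)=\int_0^\infty\varphi(f(t))\,d\mathcal L^1(t)$. Since $\varphi(0)=0$, the integrand vanishes on $\{f=0\}$, and since $f$ is non-increasing the set $\{f>0\}$ agrees, up to a null set, with the interval $\big(0,\mathcal L^1(\{f>0\})\big)$; hence the integral may be restricted to this interval, which is precisely~\eqref{LambdaG}. As a check, for $\varphi(s)=s$ this recovers $\G_{L^1}(f)=\int_0^\infty f(t)\,d\mathcal L^1(t)$, in agreement with Remark~\ref{rem1}.

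For concavity, given $f_1,\dots,f_k\in\mathcal C$ and $\lambda_1,\dots,\lambda_k\in(0,1)$ with $\sum_{i=1}^k\lambda_i=1$, I would use the pointwise inequality $\varphi\big(\sum_{i=1}^k\lambda_if_i(t)\big)\geq\sum_{i=1}^k\lambda_i\varphi(f_i(t))$, valid for every $t$ by concavity of $\varphi$. Integrating over $(0,\infty)$ and invoking~\eqref{LambdaG} — each integrand $\varphi(f_i(t))$ and $\varphi(\sum_i\lambda_if_i(t))$ vanishing off the support of the corresponding function, so that the integral over $(0,\infty)$ coincides with the one appearing in~\eqref{LambdaG} — one gets $\G_{\Lambda_\varphi}\big(\sum_{i=1}^k\lambda_if_i\big)\geq\sum_{i=1}^k\lambda_i\G_{\Lambda_\varphi}(f_i)$, that is, inequality~\eqref{nearly} with $c=1$. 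The only point requiring care in the whole argument is the bookkeeping for the Lebesgue--Stieltjes measure $d\varphi$ near $0$ and the value conventions at $0$ and $+\infty$; the rest — in particular the Tonelli interchange and the passage from $\int_0^\infty$ to the truncated integral in~\eqref{LambdaG} — is routine.
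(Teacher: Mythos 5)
Your argument is correct and follows essentially the same route as the paper's proof: both reduce $\G_{\Lambda_\varphi}(f)$ to $\|f_*\|_{\Lambda_\varphi(0,\infty)}$ via \eqref{easy G_X}, convert the Stieltjes integral $\int_0^\infty f_*(s)\,d\varphi(s)$ into $\int \varphi(f(t))\,d\mathcal L^1(t)$ by a Fubini--Tonelli interchange (the paper works with the form \eqref{Lorentzbis}, you with the measure $d\varphi$ directly, with the same bookkeeping of the atom at the origin), and then deduce concavity by integrating the pointwise concavity inequality for $\varphi$. The only cosmetic difference is that the paper records the support identity $\mathcal L^1(\{\lambda f+(1-\lambda)g>0\})=\max\{\mathcal L^1(\{f>0\}),\mathcal L^1(\{g>0\})\}$ before applying \eqref{LambdaG}, whereas you integrate over all of $(0,\infty)$ and truncate afterwards; both are valid.
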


\begin{proof} Take any non-increasing function $f:[0,\infty)\rightarrow [0,\infty]$. Set $h=f_*$, whence
 $f=f^*=(f_*)_*= h_*$ a.e., and  $h^*(0) = f_*(0) = \mathcal L^1(\{ f>0 \})$.
 From equations \eqref{Lorentzbis} and  \eqref{easy G_X}, one has,  via Fubini's
theorem,
\begin{align*}%\label{may10}
\G_{\Lambda_\varphi}(f) &= \G_{\Lambda_\varphi}(h_*)=\|h\|_{\Lambda_\varphi (0, \infty)}  = h^*(0) \varphi
(0^+) + \int _0^\infty h^*(s) \varphi '(s)\, d\mathcal L^1(s) \\ \nonumber & =
h^*(0) \varphi (0^+) + \int _0^\infty \int
_0^{h^*(s)}d\mathcal L^1(t)\, \varphi '(s)\, d\mathcal L^1(s)
\\ \nonumber & =
h^*(0) \varphi (0^+) + \int _0^{h^*(0)} \int _0^{h_*(t)} \varphi '(s)\, d\mathcal L^1(s)\, d\mathcal L^1(t)
\\ \nonumber & =
h^*(0) \varphi (0^+) + \int _0^{h^*(0)}  [\varphi (h_*(t)) - \varphi (0^+)]\, d\mathcal L^1(t)
\\ \nonumber & =
 \int _0^{h^*(0)}  \varphi (h_*(t)) \, d\mathcal L^1(t) =
 \int_0^{\mathcal L^1(\{ f>0 \})}
\varphi(f(t))\,d \mathcal L^1 (t).
 %\int _0^{\textcolor{red}{f_*(0)}}
%\varphi (f(t)) \, d\mathcal L^1(t) .
\end{align*}
Hence, formula \eqref{LambdaG} follows.
\\ In order to verify the
concavity of $\G_{\Lambda_\varphi}$, fix any pair of non-increasing
functions $f, g: [0,\infty) \rightarrow [0,\infty]$ and $\lambda \in
(0,1)$. Observe that
$$
\{t\in [0,\infty): \lambda f(t)+(1-\lambda)g(t)>0\} = \{t\in
[0,\infty): f(t)>0\} \cup \{t\in [0,\infty): g(t)>0\}.
$$
The monotonicity of $f$ and $g$ ensures that the two sets on the
right-hand side of the last equation are intervals whose left
endpoint is $0$. Consequently,
\begin{equation}\label{august100} \mathcal L^1(\{\lambda f
+(1-\lambda)g>0\}) = \max\{\mathcal L^1(\{ f >0\}), \mathcal L^1(\{
g >0\})\}.
\end{equation}
%
%are non-increasing, we have $(\lambda f
%+(1-\lambda)g)_*(0)=\max\{f_*(0), g_*(0)\}$.
On making use of equations \eqref{LambdaG} and \eqref{august100},
and of  the concavity of $\varphi$, one infers that the functional
$\G_{\Lambda_\varphi}$ is concave as well.
%, and hence, in
%particular, almost concave.
\end{proof}

\medskip

\begin{proof}[Proof of Proposition~\ref{T:Lambda}] By Lemma~\ref{L:concavity-lorentz-endpoint}, the functional
$\G_{\Lambda_\varphi}$ is  concave
for every non identically vanishing concave function $\varphi : [0, \infty) \to [0, \infty)$.
On the other hand, it is easily verified, via equation
\eqref{Lorentzbis}, that the norm $\| \cdot \|_{\Lambda_\varphi
(\Rn)}$ is locally absolutely continuous if, and only if, $\varphi
(0^+)=0$. The conclusion thus follows from
Theorem~\ref{T:main-theorem1}.
\end{proof}

\medskip

We conclude with a proof of Proposition~\ref{T:Marcinkiewicz}.

\begin{proof}[Proof of Proposition~\ref{T:Marcinkiewicz}]
%\textcolor{red}{We shall first point out that the norm
%$\|\cdot\|_{{M_\varphi}(\Rn)}$ is locally absolutely continuous
%if,
%and only if, $\lim_{s\to 0^+}\tfrac s{\varphi(s)}>0$. This fact
%is probably known but we did not find an explicit proof in
%literature.}
Assume first that $\lim_{s\to
0^+}\tfrac s{\varphi(s)}=0$. Then we claim that the norm
$\|\cdot\|_{{M_\varphi}(\Rn)}$ is not locally absolutely continuous,
and hence, by either Theorem~\ref{T:main-theorem1} or
Theorem~\ref{T:main-theorem2}, it does not have the Lebesgue point
property. To verify this claim, observe that the function $(0,
\infty) \ni s \mapsto \tfrac s{\varphi(s)}$ is quasiconcave in the
sense of \cite[Definition 5.6, Chapter 2]{BS}, and hence, by
\cite[Chapter~2, Proposition~5.10]{BS}, there exists a~concave
function $\psi : (0, \infty) \to [0, \infty)$ such that $\tfrac 12
\psi (s) \leq \tfrac s{\varphi(s)}\leq \psi (s)$ for $s \in (0,
\infty)$. Let $\psi '$ denote the right-continuous derivative of
$\psi$, and define $u(x)=\psi'(\omega_n|x|\sp n)$ for $x\in\mathbb
R\sp n$, where $\omega_n$ is the volume of the
   unit ball in $\mathbb R\sp n$. Then $u\sp*=\psi'$ in
   $(0,\infty)$, so that
   $$1 \leq u^{**}(s) \varphi (s) \leq 2 \quad \hbox{for $s \in (0,
   \infty)$.}$$
   The second inequality in the last equation ensures that   $u\in M_\varphi(\Rn)$, whereas the first one tells us that $u$ does not
   have a
    locally absolutely continuous norm in $M_\varphi(\Rn)$.
 \\Conversely, assume that $\lim_{s\to 0^+}\tfrac s{\varphi(s)}>0$, then $(M_{\varphi})_{\loc}(\Rn)= L^1_{\loc}(\Rn)$, with equivalent norms
 on any given subset of $\Rn$ with finite measure (see e.g.
 \cite[Theorem~5.3]{S}).
 Hence, the  norm $\|\cdot\|_{{M_\varphi}(\Rn)}$ has the Lebesgue point property, since
 $\|\cdot\|_{L^1(\Rn)}$ has it.
\end{proof}

%%%%%%%%%%%%%%%%%%%%%%%%%%%%%%%%%%%%%%%%%%%%%%%%%%%%%%%%%%%%%%%%%%%%%%%%%%%%%%%%

\end{document}